\documentclass{article}
\usepackage{graphicx} 
\usepackage{amsmath}
\usepackage{amsfonts}
\usepackage{amssymb}
\usepackage{bm}
\usepackage{amsthm}
\usepackage{bbm}
\usepackage{stmaryrd}
\usepackage[english]{babel}
\usepackage{dirtytalk}
\usepackage{dsfont}
\usepackage{soul}
\usepackage{xcolor}
\usepackage{cancel}
\usepackage{float}

\usepackage{hyperref}

\newtheorem{theorem}{Theorem}[section]

\newtheorem{lemma}[theorem]{Lemma}
\newtheorem{proposition}[theorem]{Proposition}
\newtheorem{definition}[theorem]{Definition}
\newtheorem{remark}[theorem]{Remark}
\newtheorem{example}[theorem]{Example}

\newtheorem{notation}[theorem]{Notation}
\newtheorem*{acknowledgement}{Acknowledgements}
\newtheorem*{address}{Address}

\numberwithin{equation}{section}

\title{Global Schauder estimates for nondivergence stationary operators modeled on homogeneous Hörmander vector fields}

\author{Matteo Faini}
\date{\begin{footnotesize} Dipartimento di Matematica, Politecnico di Milano, via Bonardi 9, 20133 Milano, Italy \end{footnotesize}}

\begin{document}
\maketitle

\begin{abstract}
In this paper we prove global regularity results and Schauder estimates for non-divergence stationary operators of the form $\mathcal{L}=\sum_{i,j} a_{ij}(x) \, X_i X_j$, where $\{X_i\}_{1 \leq i \leq m}$ are homogeneous (but not necessarily left-invariant) Hörmander vector fields in $\mathbb{R}^n$ ($n>m$), and $[a_{ij}(x)]_{i,j}$ is a symmetric uniformly positive-definite matrix with Hölder-continuous entries w.r.t. the control distance induced by the vector fields $\{X_i\}_i$. 
\end{abstract}

\footnote{MSC2020: 35B45, 35B65 (primary); 35J70, 35R03 (secondary).
\newline
Key words: a priori estimates; regularity of solutions; non-isotropic Hölder spaces; degenerate elliptic operators; Hörmander vector fields.}

\vspace{5mm}

\section{Introduction}
The aim of this paper is to prove regularity results and Schauder estimates for the second order differential operator
\begin{equation}
\label{our operator L}
    \mathcal{L} = \sum_{i,j=1}^m a_{ij}(x) \, X_i X_j = a^{ij}(x) \, X_i X_j,
\end{equation}
where $X_1, \, ..., \, X_m$ are Hörmander vector fields on $\mathbb{R}^n$ ($n>m$), homogeneous w.r.t. a family of non-isotropic dilations (but not necessarily left-invariant w.r.t. any Lie group structure), and $A(x)=[a_{ij}(x)]_{1 \leq i,j \leq m}$ is a symmetric and uniformly positive-definite matrix whose coefficients are Hölder-continuous functions on $\mathbb{R}^n$ w.r.t. the control distance induced by $X_1, \, ..., \, X_m$. 

According to Einstein's convention, in (\ref{our operator L}) and whenever not specified otherwise, any repeated index appearing once as apex and once as subscript will be implicitly summed from $1$ up to $m$.

\vspace{2mm}

\hspace{-7mm} Let us give a general overview of the existing literature on this topic. After the famous paper by Hörmander \cite{Hörmander's holy graal} on hypoellipticity, a lot of work has been devoted to the analysis of Hörmander operators, and later also to more general nonvariational operators modeled on Hörmander vector fields, of the form (\ref{our operator L}). \\
In particular, a local regularity theory in both Hölder spaces and Sobolev spaces induced by the vector fields, as well as the associated estimates,
have been first proved by Folland \cite{Fo2} for a wide class of hypoelliptic operators, invariant under the action of Carnot groups; without such assumptions on the underlying algebraic structure, analogous results were proved in local form by Rothschild, Stein \cite{R.S. lifting} for general Hörmander sum of squares. Later, this local theory has been extended by Bramanti, Brandolini \cite{Hölder spaces: original vs lifted}, \cite{librone} first to evolution operators $\mathcal{H}=\mathcal{L}-\partial_t$ with $\mathcal{L}$ as in (\ref{our operator L}), and then to the wider class of operators $a^{ij}(x) X_i X_j - X_0$, with $\{X_i\}_{i=0}^m$ Hörmander vector fields and coefficients $a_{ij}$ either Hölder-continuous or satisfying a VMO condition.\\
In Sobolev spaces, global versions of these results have been then established:
\begin{itemize}
    \item in the stationary case, by Biagi, Bonfiglioli, Bramanti \cite{Global Sobolev est for SS} for homogeneous Hörmander sum of squares; later by Biagi, Bramanti \cite{Global Sobolev est with a_ij} for operators modeled on homogeneous Hörmander vector fields with $VMO$ coefficients; 
    \item in the evolution case, by Biagi, Bramanti \cite{Global Sobolev est KFP} for a class of Kolmogorov-Fokker-Planck operators with coefficients VMO in space and just measurable in time.
\end{itemize}
In Hölder spaces, instead, the only global result is the one developed by Biagi, Bramanti \cite{Hölder est Kolmogorov} for Kolmogorov-Fokker-Planck operators with coefficients Hölder-continuous in space and measurable in time, whereas a correspondent global regularity theory for stationary operators is so far lacking. The main motivation of this paper is precisely to fill this void, developing a global regularity theory parallel to \cite{Global Sobolev est for SS}, \cite{Global Sobolev est with a_ij} in the context of Hölder spaces.

\vspace{2mm}

We give a brief anticipation on the procedure that will lead us to that result.\\As usually done in this context, we will first lift $\mathcal{L}$ to a similar operator $\mathcal{\widetilde{L}}$ on a higher dimensional space, having the algebraic structure of Carnot group (see Theorem \ref{global BB lifting thm}); then prove global Schauder estimates in this richer framework, which is easier in view of the underlying group structure.
The idea is to use integral representation formulas that allow to express any smooth function and its derivatives in terms of the fundamental solution of the \say{model} operator obtained by \say{freezing} the coefficients of $\mathcal{\widetilde{L}}$ at a fixed point of $\mathbb{R}^N$. The existence of the latter, as well as its powerful properties, are the result of a deep theory about sublaplacians on Carnot groups, started by Folland \cite{Fo2} and then enriched by Bramanti, Brandolini \cite{librone}.\\
In the end of the paper we will finally remove the group structure and extend our result to the original class of operators (\ref{our operator L}), by means of the global lifting method by Biagi, Bonfiglioli \cite{lifting BB}, which is an adaptation of the classical by Folland \cite{Fo1}; we will also exploit some relations between Hölder spaces in the original setting and in the lifted framework, which we owe to \cite{Hölder spaces: original vs lifted}.

\vspace{2mm}

\hspace{-7mm} Let us clarify the theoretical setting. We assume that $X_1, \, ..., \, X_m$ are smooth vector fields on $\mathbb{R}^n$ $(n>m)$ with the following properties:
\begin{itemize}
    \item [(H1)] $X_1, \, ...,\, X_m$ are $1$-homogeneous w.r.t. a family of non-isotropic dilations $\{\delta_{\lambda}\}_{\lambda>0}$ of $\mathbb{R}^n$, which act as
    \begin{equation*}
        \delta_{\lambda}(x) = (\lambda^{\sigma_1} x_1, \, ..., \, \lambda^{\sigma_n} x_n) \hspace{5mm} \text{for suitable integers} \: 1 = \sigma_1 \leq ... \leq \sigma_n.
    \end{equation*}
    Explicitly, this means that for every smooth function $f$ it holds
    \begin{equation*}
        X_i (f \circ \delta_{\lambda}) = \lambda (X_i f) \circ \delta_{\lambda} \quad \forall 1 \leq i \leq m, \, \lambda>0.
    \end{equation*}
    \item [(H2)] $X_1, \, ..., \, X_m$ are linearly independent (as vector fields) and satisfy Hörmander's rank condition in the origin of $\mathbb{R}^N$.
\end{itemize}

\vspace{2mm}

\begin{remark}
    (H1) and (H2) imply that
    Hörmander's rank condition for $\{X_i\}_{i=1}^m$ actually holds in the whole space $\mathbb{R}^n$ (see \cite[Remark 3.2]{global est fund sol hom Horm op} for more details).
\end{remark}

\vspace{2mm}

Keeping the notation above, we denote by 
\begin{equation*}
    q := \sum_{i=1}^n \sigma_i
\end{equation*} 
the \emph{homogeneous dimension} of $\mathbb{R}^n$ w.r.t. $\delta_{\lambda}$, which we assume to be strictly greater than $2$. We point out that this is not an actual restriction, as the only case ruled out is when $\mathcal{L}$ is a uniformly elliptic operator in the Euclidean space $\mathbb{R}$ or $\mathbb{R}^2$, for which global regularity results are already well known.\\
Let $d_X$ denote the \emph{control distance} induced by $X_1, \, ..., \, X_m$ on $\mathbb{R}^n$ (see Section \ref{sec 2} for the precise definition).
\\For $\Lambda \in (0,1)$, let $\mathcal{M}_{\Lambda}$ be the set of symmetric constant matrices defined as
\begin{equation*}
    \mathcal{M}_{\Lambda} = \{B=[b_{ij}] \in \mathbb{R}^{m \times m}: B=B^T, \, \Lambda |\xi|^2 \leq  b^{ij}\, \xi_i \xi_j \leq \Lambda^{-1} |\xi|^2 \:\, \forall \xi \in \mathbb{R}^m\}.
\end{equation*}

\hspace{-7mm} Let now $\Lambda \in (0,1), \, \alpha \in (0,1)$ be fixed parameters. Consider a matrix 
\begin{itemize}
    \item [(H3)] $[a_{ij}(x)] \in \mathcal{M}_{\Lambda}$ for all $x \in \mathbb{R}^n$ with entries $a_{ij}$ satisfying 
    \begin{equation*}
        |a_{ij}(x)-a_{ij}(x')| \leq c \, d_X(x,x')^{\alpha} \quad \forall x,x' \in \mathbb{R}^n, \, 1 \leq i,j \leq m
    \end{equation*}
    for some constant $c>0$.
\end{itemize}
To keep track of the dependence of the constants, for any $k \geq 0$ we denote
\begin{equation*}
    ||a||_{k,X}:=\max_{1 \leq i,\, j \leq m} ||a_{ij}||_{C^{k,\alpha}_X(\mathbb{R}^n)},
\end{equation*}
whenever the latter is well defined, i.e. provided that $\{a_{ij}\}_{i,j=1}^m \subset C^{k,\alpha}_X(\mathbb{R}^n)$ (see Section \ref{sec 2} for the definition of the spaces $C^{k,\alpha}_X(\cdot)$); when there is no risk of confusion, we will drop the \say{$X$} and write just $||a||_k$.

Under these assumptions, the operator (\ref{our operator L}) is degenerate elliptic, but the missing directions are \say{recovered} by commutators, in the sense of \cite{Hörmander's holy graal}. Note, however, that in view of the fact that its coefficients $a_{ij}$ are not smooth, the operator $\mathcal{L}$ is not hypoelliptic. For this kind of operators, we want to prove the following:

\vspace{2mm}

\begin{theorem}[Global Schauder estimates]
\label{global C^alpha estimate thm}
    Under assumptions (H1)-(H2)-(H3) on $\mathcal{L}$, there exists a positive constant $c_0=c_0( \{X_i\}_{i=1}^m,\Lambda, \alpha, ||a||_{0,X})$ such that for every $u \in C^{2,\alpha}_X(\mathbb{R}^n)$ the following estimate holds:
    \begin{equation}
    \label{global C^alpha estimate}
        ||u||_{C^{2,\alpha}_X(\mathbb{R}^n)} \leq c_0 \, \{||\mathcal{L}u||_{C^{\alpha}_X(\mathbb{R}^n)} + ||u||_{L^{\infty}(\mathbb{R}^n)} \}.
    \end{equation}
    If moreover we assume that, for some $k \geq 1$, $a_{ij} \in C^{k,\alpha}_X(\mathbb{R}^n)$ for all $1 \leq i,j \leq m$, then there exists a constant $c_k=c(k,\{X_i\}_{i=1}^m, \Lambda, \alpha, ||a||_{k,X})>0$ such that, for every $u \in C^{2,\alpha}_X(\mathbb{R}^n)$ satisfying $\mathcal{L} u \in C^{k,\alpha}_X(\mathbb{R}^N)$, there hold $u \in C^{k+2,\alpha}_X(\mathbb{R}^n)$ and
    \begin{equation*}
        ||u||_{C^{k+2,\alpha}_X(\mathbb{R}^n)} \leq c_k \left\{ ||\mathcal{L} u||_{C^{k,\alpha}_X(\mathbb{R}^n)} + ||u||_{L^{\infty}(\mathbb{R}^n)} \right\}.
    \end{equation*}
\end{theorem}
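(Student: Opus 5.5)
Following the strategy outlined in the introduction, I would first prove the estimate in the lifted Carnot group setting and then project it down to $\mathbb{R}^n$.

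\textbf{Step 1: lifting.} I would invoke the global lifting theorem of Biagi--Bonfiglioli (Theorem \ref{global BB lifting thm}). It provides a Carnot group $\mathbb{G}=(\mathbb{R}^N,\circ,D_\lambda)$ with $N=n+p$ and left-invariant Hörmander fields $\widetilde X_1,\dots,\widetilde X_m$ on $\mathbb{G}$. These fields project onto the $X_i$, in the sense that $\widetilde X_i(f\circ\pi)=(X_if)\circ\pi$, where $\pi(x,\xi)=x$. I would set $\widetilde a_{ij}(x,\xi)=a_{ij}(x)$ and $\widetilde{\mathcal L}=\widetilde a^{ij}\widetilde X_i\widetilde X_j$, so that $\widetilde{\mathcal L}(u\circ\pi)=(\mathcal Lu)\circ\pi$. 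Using the comparison results of Bramanti--Brandolini between $C^{k,\alpha}_X$ and $C^{k,\alpha}_{\widetilde X}$ norms (for functions independent of $\xi$ the two norms are equivalent, since $d_{\widetilde X}((x,\xi),(y,\eta))\ge d_X(x,y)$, with the reverse inequality obtained after an infimum over $\eta$), it will then suffice to prove \eqref{global C^alpha estimate} on $\mathbb{G}$ for $\widetilde{\mathcal L}$, applied to $\widetilde u=u\circ\pi$.

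\textbf{Step 2: local estimate on $\mathbb{G}$ with uniform constants.} I would freeze the coefficients at a point $x_0$ and write $\widetilde{\mathcal L}_0=a^{ij}(x_0)\widetilde X_i\widetilde X_j$. This operator is a left-invariant homogeneous hypoelliptic operator, so by Folland and Bramanti--Brandolini it has a fundamental solution $\Gamma_{x_0}$, homogeneous of degree $2-Q$, whose derivative bounds are uniform for matrices in $\mathcal M_\Lambda$. Let $\phi$ be a cutoff supported in $B(x_0,r)$. The representation formula
\[
\widetilde X_i\widetilde X_j(\phi u)=\mathrm{PV}\!\int \widetilde X_i\widetilde X_j\Gamma_{x_0}\,\widetilde{\mathcal L}_0(\phi u)+c_{ij}(x_0)\,\widetilde{\mathcal L}_0(\phi u),
\]
combined with the $C^\alpha$ continuity of singular integrals with homogeneous kernels of degree $-Q$ having vanishing mean, gives the bound
\[
|\widetilde X^2(\phi u)|_\alpha\le c\,|\widetilde{\mathcal L}_0(\phi u)|_\alpha .
\]
I would then expand $\widetilde{\mathcal L}_0(\phi u)=\phi\widetilde{\mathcal L}u+(\widetilde a^{ij}(x_0)-\widetilde a^{ij})\widetilde X_i\widetilde X_j(\phi u)+(\text{terms with }\widetilde X\phi)$. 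The error term $(\widetilde a^{ij}(x_0)-\widetilde a^{ij})\widetilde X_i\widetilde X_j(\phi u)$ is small in $C^\alpha$, of order $r^\alpha$, so for $r$ small it can be absorbed into the left-hand side. The cutoff terms contain only $\widetilde Xu$ and $u$. The crucial point is that, by left-invariance and the translation-invariance of $d_{\widetilde X}$, the constants depend neither on $x_0$ nor on anything else except $\Lambda$, $\alpha$, $\|a\|_0$ and the fields, and the radius $r$ is also uniform.

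\textbf{Step 3: globalization and interpolation.} I would take suprema over $x_0$ to control the global $C^\alpha$ seminorm of $\widetilde X^2u$. Pairs of points at distance greater than $r/2$ are handled trivially by sup norms. This yields
\[
\|\widetilde u\|_{C^{2,\alpha}}\le c\bigl(\|\widetilde{\mathcal L}\widetilde u\|_{C^\alpha}+\|\widetilde u\|_{C^{1,\alpha}}\bigr).
\]
Finally I would use the interpolation inequality
\[
\|\widetilde u\|_{C^{1,\alpha}}\le\varepsilon\|\widetilde u\|_{C^{2,\alpha}}+c_\varepsilon\|\widetilde u\|_\infty,
\]
proved on the group via Taylor-type (mean value) inequalities along horizontal paths and applied on balls of fixed radius. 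Absorbing the $\varepsilon$-term gives \eqref{global C^alpha estimate}.

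\textbf{Step 4: higher regularity, by induction on $k$.} For a horizontal field $\widetilde X_l$, I would apply the $k=0$ estimate to difference quotients along $\exp(t\widetilde X_l)$, which is natural on the group because right translations commute with left-invariant fields. This shows that $\widetilde X_l u\in C^{2,\alpha}$ and that it solves
\[
\widetilde{\mathcal L}(\widetilde X_l u)=\widetilde X_l\widetilde{\mathcal L}u-(\widetilde X_l\widetilde a^{ij})\widetilde X_i\widetilde X_j u+\widetilde a^{ij}[\widetilde X_l,\widetilde X_i\widetilde X_j]u .
\]
The commutator term involves nonhorizontal fields of weight $3$. This is handled by expressing commutators as combinations of $\widetilde X^3$ derivatives, which I would treat by first running the argument through the frozen operator and the representation formula for $\widetilde X^3(\phi u)$, rather than the PDE directly.

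I expect this regularity upgrade to be the main obstacle. By contrast, the projection back to $\mathbb{R}^n$ is routine given Bramanti--Brandolini.
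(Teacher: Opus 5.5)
\textbf{There is a genuine gap in Step 4.} For $k=0$, your Steps 1--3 are essentially the paper's argument: lifting, freezing, $C^\alpha$ bounds for the principal-value operator with kernel $\widetilde X_i\widetilde X_j\Gamma_{x_0}$, absorption of the $r^\alpha$ freezing error, cutoffs plus interpolation, and descent. Your remark that $\inf_\eta d_{\widetilde X}((x,\xi),(y,\eta))=d_X(x,y)$, obtained by lifting subunit curves, is even more direct than the paper's covering argument for the descent.

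Step 4 carries the whole $k\ge 1$ part of the statement, including the regularity claim $u\in C^{k+2,\alpha}_X$, and its justification is false. The flow of $\widetilde X_l$ is the right translation $\rho_h(x)=x*h$ with $h=\exp(t\widetilde X_l)$. Left-invariant fields commute with left translations, not right ones. Indeed $\widetilde X_i(v\circ\rho_h)=\big((\mathrm{Ad}_{h^{-1}}\widetilde X_i)v\big)\circ\rho_h$, where $\mathrm{Ad}_{h^{-1}}\widetilde X_i=\widetilde X_i-t[\widetilde X_l,\widetilde X_i]+\frac{t^2}{2}[\widetilde X_l,[\widetilde X_l,\widetilde X_i]]-\cdots$. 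Left translations do commute with the $\widetilde X_i$, but their difference quotients tend to $\widetilde X_l^R u$. That involves nonhorizontal derivatives with unbounded polynomial coefficients, both for $u$ and for $\widetilde a^{ij}$.

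Consequently the equation for the difference quotient contains a translate of $\widetilde a^{ij}[\widetilde X_l,\widetilde X_i\widetilde X_j]u$, exactly as in the identity you wrote for $\widetilde X_l u$. This is a combination of third horizontal derivatives of $u$ with no small factor in front. For the regularity claim these derivatives are not known to exist. For the a priori bound, feeding this into the $k=0$ estimate gives $\|\widetilde X_l u\|_{C^{2,\alpha}}\le c\,\|\widetilde X^3 u\|_{C^{\alpha}}+\dots$ with a non-small $c$, which is circular.

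\textbf{Your fallback lacks the mechanism that makes it work.} A representation formula for $\widetilde X^3(\phi u)$ through the frozen operator is the right direction, but you do not say how to obtain one. Putting three left-invariant derivatives on $\int\Gamma_{x_0}(y^{-1}*x)\,\widetilde{\mathcal L}_0(\phi u)(y)\,dy$ gives the operator $T_f$ with $f=\widetilde X_i\Gamma_{x_0}$, which has degree $1-Q$ and lies outside the $C^\alpha$ singular-integral theory. A left-invariant derivative in $x$ cannot simply be transferred to the $y$-variable. Commuting $\widetilde X_l$ with $\widetilde{\mathcal L}_0$ just brings back the same commutator.

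The paper's key step is the following. Write $\widetilde X_j=\sum_i r_{ij}\widetilde X_i^R$, where the $r_{ij}$ are homogeneous polynomials of degree $\sigma_i-\sigma_j$ that commute with $\widetilde X_i^R$ by homogeneity. Expand the nonhorizontal $\widetilde X_i^R$ into iterated horizontal right-invariant fields using the stratification. Then use $\int(\widetilde X_i^R f)(y^{-1}*x)\,g(y)\,dy=\int f(y^{-1}*x)\,(\widetilde X_i g)(y)\,dy$ to move one horizontal derivative onto the data. This gives $\widetilde X_I u(x)=\int\Gamma^{I;J}_{x_0}(y^{-1}*x)\,\widetilde X_J\widetilde{\mathcal L}_0u(y)\,dy$. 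The new kernels are still smooth off the origin, $(2-Q)$-homogeneous, and bounded uniformly in $x_0$. The singular-integral estimate, which the paper proves for arbitrary $(2-Q)$-homogeneous $f$ for precisely this purpose, then applies. The freezing error $\widetilde X_J(\widetilde{\mathcal L}_0-\widetilde{\mathcal L})u$ contributes only an $R^\alpha$-small top-order term plus lower-order terms.

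\textbf{Regularity needs a separate input.} Even with the a priori estimate, the qualitative statement $u\in C^{k+2,\alpha}$ does not follow by itself. The paper takes local $C^{k+2,\alpha}$ regularity from the Bramanti--Brandolini local theory. It then globalizes by applying the a priori estimate to $u\phi_R$, using radial cutoffs whose horizontal derivatives are bounded uniformly in $R\ge1$, and letting $R\to\infty$.
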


\vspace{2mm}

We stress that in the above theorem (which is the main result of the present paper) we are just requiring a homogeneity property for the Hörmander vector fields $X_1, \, ..., \, X_m$ to hold, without however assuming the presence of any underlying group structure.

\vspace{2mm}

Let $\mathfrak{a}=Lie\{X_1, \, ..., \, X_m\}$ denote the smallest Lie algebra containing $\{X_i\}_{i=1}^m$. Due to the homogeneous structure of $(\mathbb{R}^n, \delta_{\lambda})$ we have that $\mathfrak{a}$ is finite dimensional (as a real vector space), so we can set $N:=dim(\mathfrak{a})$.
Note that (H2) implies $N \geq n$, but in this paper we always assume $N>n$, ruling out just the trivial case where there already exists a group of translations that make $X_1, \, ..., \, X_m$ left-invariant (as noted in \cite[Section 2]{Gauss bounds with a_ij (BB)}).
Called $p:=N-n\geq 1$, we denote the points of the lifted space $\mathbb{R}^N$ by $(x,\xi)$, with $x \in \mathbb{R}^n$ and $\xi \in \mathbb{R}^p$.

\vspace{2mm}

\hspace{-7mm} We summarize the main results of the global lifting by Biagi, Bonfiglioli \cite{lifting BB}:

\vspace{2mm}

\begin{theorem}[Global lifting]
\label{global BB lifting thm}
    Under the above assumptions (of which we inherit the notation), there exists a Carnot group $\mathbb{G}=(\mathbb{R}^N, *, D_{\lambda})$ of step $\sigma_n$ with $m$ generators $\widetilde{X}_1, \, ..., \, \widetilde{X}_m$, which are related to $X_1, \, ..., \, X_m$ as follows:
\begin{equation}
\label{relation X_i tilde and X_i}
    \widetilde{X}_i = X_i + \sum_{j=n+1}^N R_j(x,\xi) \frac{\partial}{\partial \xi_j},
\end{equation}
for suitable smooth functions $\{R_j\}_{j=n+1}^N$. Moreover, there exist integers $1 \leq \tau_1 \leq ... \leq \tau_p$ such that the \say{lifted} dilations $D_{\lambda}$ act as
\begin{equation*}
    D_{\lambda}(x,\xi)=(\delta_{\lambda}(x), \delta_{\lambda}^*(\xi)), \quad \text{with} \: \delta_{\lambda}^*(\xi)=(\lambda^{\tau_1} \xi_1, \, ..., \, \lambda^{\tau_p} \xi_p).
\end{equation*}
\end{theorem}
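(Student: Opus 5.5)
This is the global lifting theorem of Biagi and Bonfiglioli \cite{lifting BB}. I will not reprove it in full; I will only recall the structure of its proof, which adapts Folland's construction \cite{Fo1} to the non-invariant homogeneous setting.

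\textbf{Step 1: build the group.} Let $\mathfrak{a}=Lie\{X_1,\dots,X_m\}$, which has dimension $N$.
\begin{itemize}
\item By (H1), every iterated bracket $[X_{i_1},[\dots,X_{i_k}]]$ is $\delta_\lambda$-homogeneous of degree $k$. Homogeneous vector fields of degree above $\sigma_n$ vanish identically, so $\mathfrak{a}$ is nilpotent of step at most $\sigma_n$.
\item $\mathfrak{a}$ is also stratified, $\mathfrak{a}=\bigoplus_{k=1}^{\sigma_n} V_k$, with $V_k$ the span of the brackets of length $k$. This grading is well defined because it coincides with the decomposition of $\mathfrak{a}$ into homogeneous degrees.
\item Equip $\mathbb{R}^N\simeq\mathfrak{a}$, in graded exponential coordinates, with the Baker--Campbell--Hausdorff product. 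This gives a Carnot group $\mathbb{G}$ of step $\sigma_n$ with $m$ generators, whose dilations are induced by the grading.
\end{itemize}

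\textbf{Step 2: project onto $\mathbb{R}^n$.} Define $\pi:\mathbb{G}\to\mathbb{R}^n$ by
$$\pi(\exp(Y))=\exp(Y)(0),$$
i.e. the time-one flow of the vector field $Y\in\mathfrak{a}$ starting at the origin. This flow is globally defined, since homogeneous vector fields have polynomial, triangular coefficients.
\begin{itemize}
\item Hörmander's condition at $0$ (H2) gives $\mathrm{ev}_0(\mathfrak{a})=\mathbb{R}^n$. Hence one can choose a basis of $\mathfrak{a}$ adapted both to the grading and to the kernel of $\mathrm{ev}_0$.
\item In these coordinates, after a triangular polynomial change of variables, $\pi$ becomes the projection $(x,\xi)\mapsto x$. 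The degrees of the complementary basis elements give the exponents $\tau_1\le\dots\le\tau_p$, so that $D_\lambda=(\delta_\lambda,\delta^*_\lambda)$.
\end{itemize}

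\textbf{Step 3: show the generators lift the $X_i$.} The key identity is that the left-invariant field $\widetilde X_i$ agreeing with $X_i$ at $0$ is $\pi$-related to $X_i$:
$$d\pi\big(\widetilde X_i\big)=X_i\circ\pi .$$
This follows from the equivariance $\pi(g*\exp(tX_i))=\exp(tX_i)(\pi(g))$, which in turn comes from composing flows via BCH and holds because $\mathfrak{a}$ is nilpotent.

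Since $\pi$ is the coordinate projection, this identity says precisely that $\widetilde X_i=X_i+\sum_{j}R_j(x,\xi)\,\partial_{\xi_j}$, which is (\ref{relation X_i tilde and X_i}). The $R_j$ are polynomials, in particular smooth.

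\textbf{Main obstacle.} The delicate point is Step 2–3: showing that the exponential-flow map $\pi$ is globally a smooth submersion which becomes the linear projection in suitable homogeneous coordinates. This is exactly where global homogeneity replaces the local Rothschild–Stein lifting, and I would rely on \cite{lifting BB} for the details.
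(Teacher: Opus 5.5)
Your proposal is correct and takes the same route as the paper, which gives no argument of its own and simply cites \cite[Theorem 3.2]{lifting BB}; your outline of the Folland-type construction behind that result is accurate (BCH group on the graded algebra $\mathfrak{a}$, orbit map $\pi(\exp Y)=\exp(Y)(0)$, and $\pi$-relatedness of $\widetilde X_i$ to $X_i$ via $\pi(g*\exp(tX_i))=\exp(tX_i)(\pi(g))$). Two small additions. First, the step is exactly $\sigma_n$, not just at most $\sigma_n$: by (H2) some element of $\mathfrak{a}$ has a nonzero $\partial_{x_n}$-component at $0$, and only its degree-$\sigma_n$ part can contribute there. Second, your ``main obstacle'' is elementary: in a homogeneous basis $Z_1,\dots,Z_N$ of $\mathfrak{a}$ with $Z_j(0)=e_j$ for $j\le n$ and $Z_j(0)=0$ for $j>n$, one has $d\pi(0)=[I_n\,|\,0]$, and homogeneity forces $\pi_j(u)=u_j+P_j(u_k:\deg Z_k<\sigma_j)$, so $u\mapsto(\pi(u),u_{n+1},\dots,u_N)$ is a weight-triangular polynomial diffeomorphism commuting with the dilations.
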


\vspace{2mm}

\hspace{-7mm} For the proof see \cite[Theorem 3.2]{lifting BB}.

\vspace{2mm}

\begin{remark}[Comparison with Rothschild-Stein's lifting]
    Let us point out some crucial differences between the lifting in Theorem \ref{global BB lifting thm} and the first famous lifting result, proved by Rothschild and Stein in \cite{R.S. lifting}. \\The latter states that \emph{any} system of Hörmander vector fields can be lifted on a higher dimensional space to a system of free Hörmander vector fields, which, in turn, are locally approximated (in the sense of Taylor series) by the generators of a Carnot group; i.e., it is a very general lifting result, but holding only in local form and consisting in a two-step approximation. \\Conversely, Folland's lifting (of which Theorem \ref{global BB lifting thm} is a useful corollary) directly lifts a system of homogeneous Hörmander vector fields to a set of generators of a Carnot group, and that lifting is performed globally; this is made possible by the homogeneous structure, which makes the above lifting less general than Rothschild-Steins's one, but also much more powerful.
    \end{remark}

    \vspace{2mm}

Defined $\widetilde{a}_{ij}(x,\xi):=a_{ij}(x)$ for every $(x,\xi) \in \mathbb{R}^N$, the idea of the paper is to prove a result analogous to Theorem \ref{global C^alpha estimate thm} for the \say{lifted} operator
\begin{equation}
\label{def L tilde}
    \mathcal{\widetilde{L}} = \widetilde{a}^{ij}(x,\xi) \, \widetilde{X}_i \widetilde{X}_j,
\end{equation}
which is much nicer thanks to the presence of an underlying group structure; once we will have proved this, in Section \ref{sec 5} we will remove the group structure and come back to our original setting by means of (\ref{relation X_i tilde and X_i}) and some relations between original and lifted Hölder spaces.
\\Let us make a basic example of how the lifting procedure actually works:

\vspace{2mm}

\begin{example}
    Let us consider $\mathbb{R}^2 \ni (x,y)$, endowed with the dilations $\delta_{\lambda}(x,y)=(\lambda x, \lambda^2 y)$. The vector fields
    \begin{equation*}
        X_1 = \partial_x, \quad X_2 = x \partial_y
    \end{equation*}
    are $1$-homogeneous w.r.t. $\delta_{\lambda}$ and satisfy Hörmander's rank condition in $\mathbb{R}^2$, since $X_1=\partial_x$ and $[X_1,X_2]=\partial_y$ obviously span $\mathbb{R}^2$ at any point.\\
    We lift $X_1, \, X_2$ to new vector fields $\widetilde{X}_1, \, \widetilde{X}_2$ defined on $\mathbb{R}^3 \ni (x,y,\xi)$ as follows:
    \begin{align*}
        & \widetilde{X}_1=X_1=\partial_x, \quad \widetilde{X}_2=X_2+\partial_{\xi} = x \partial_y + \partial_{\xi}
    \end{align*}
    and we introduce the following translations and dilations on $\mathbb{R}^3$:
    \begin{align*}
        & (x,y,\xi)*(x',y',\xi'):=(x+x', y+y'+x\xi'-x'\xi,\xi+\xi');\\
        & D_{\lambda}(x,y,\xi)=(\lambda x, \lambda^2 y, \lambda \xi).
    \end{align*}
    It is well known that $(\mathbb{R}^3, *, D_{\lambda})$ is a Carnot group (the so called \emph{Heisenberg group} $\mathbb{H}^1$) and $\widetilde{X}_1, \, \widetilde{X}_2$ are its generators, so $X_1, \, X_2$ have been lifted to generators of a Carnot group on a strictly higher dimensional space; here \say{lifted} means that the actions of $X_i$ and $\widetilde{X}_i$ on any function $g \in C^{\infty}(\mathbb{R}^3)$ that do not depend on the added variable $\xi$ coincide.
\end{example}

\vspace{2mm}

As already mentioned above, the main step in the proof of Theorem \ref{global C^alpha estimate thm} consists in proving an analogous result in the context of Carnot groups, namely for operators (\ref{def L tilde}). Since this result may have an independent interest, we state it explicitly:

\vspace{2mm}

\begin{theorem}[Global Schauder estimates in Carnot groups]
\label{1.4: Schauder est Carnot thm}
    Let $\mathbb{G}$ be a Carnot group in $\mathbb{R}^N$, with generators $Y_1, \, ..., \, Y_m$ and homogeneous dimension $Q>2$. Fixed $\alpha \in (0,1), \, \Lambda \in (0,1)$, consider a matrix $[a_{ij}(z)] \in \mathcal{M}_{\Lambda}$ for all $z \in \mathbb{R}^N$, with coefficients $a_{ij} \in C^{\alpha}_Y(\mathbb{R}^N)$ for all $1 \leq i,j \leq m$, then let $L = a^{ij}(z) \, Y_i Y_j$.
    \\
    There exists a constant $c_0=c_0(\mathbb{G}, \Lambda, \alpha, ||a||_{0,Y})>0$ such that the inequality
    \begin{equation}
    \label{Schauder est Carnot eqn}
        ||u||_{C^{2,\alpha}_Y(\mathbb{R}^N)} \leq c_0 \left\{ ||L u||_{C^{\alpha}_Y(\mathbb{R}^N)} + ||u||_{L^{\infty}(\mathbb{R}^N)} \right\}
    \end{equation}
    holds for any function $u \in C^{2,\alpha}_Y(\mathbb{R}^N)$.
\end{theorem}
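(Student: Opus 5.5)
The plan is a freezing-of-coefficients argument based on the fundamental solution of constant-coefficient sublaplacians on $\mathbb{G}$, combined with translation and dilation invariance to make every constant uniform over $\mathbb{R}^N$.

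\emph{Frozen operators.} For each $z_0\in\mathbb{R}^N$ let $L_{z_0}=a^{ij}(z_0)Y_iY_j$. This is a left-invariant, $2$-homogeneous hypoelliptic operator on $\mathbb{G}$. Since $Q>2$, the Folland theory (as refined by Bramanti--Brandolini) provides a global fundamental solution $\Gamma_{z_0}$, homogeneous of degree $2-Q$. The key point is that the constants in its derivative bounds, and in the Calder\'on--Zygmund/Folland estimates for the singular kernels $Y_iY_j\Gamma_{z_0}$, can be chosen uniformly for matrices in $\mathcal{M}_\Lambda$. These kernels have vanishing mean on spheres, so convolution with them is bounded on homogeneous $C^\alpha_Y$ for compactly supported functions. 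Hence, for $v\in C^{2,\alpha}_Y$ with support in a ball $B(\bar z,r)$, the representation $Y_iY_jv=\text{P.V.}\,(L_{z_0}v)*Y_iY_j\Gamma_{z_0}+c_{ij}(z_0)L_{z_0}v$ yields
\[
|Y_iY_jv|_{C^\alpha_Y}\le c\,|L_{z_0}v|_{C^\alpha_Y},
\]
with $c=c(\mathbb{G},\Lambda,\alpha)$ and in particular independent of $r$ and $\bar z$, by homogeneity and left invariance. Everything here should be quotable from \cite{librone}, \cite{Fo2}.

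\emph{Local estimate with small support.} Take $z_0=\bar z$ and write $L_{z_0}v=Lv+(a^{ij}(z_0)-a^{ij}(z))Y_iY_jv$. On $B(\bar z,r)$ the coefficient difference has sup norm at most $cr^\alpha$, so its product with $Y_iY_jv$ has $C^\alpha$ seminorm at most $c\,r^\alpha|Y^2v|_{C^\alpha}+c\,|a|_{C^\alpha}\|Y^2v\|_\infty$. Choosing $r_0$ small (depending only on $\Lambda,\alpha,\|a\|_{0}$) absorbs the first term. Then interpolation, $\|Y^2v\|_\infty\le \varepsilon|Y^2v|_{C^\alpha}+c_\varepsilon\|v\|_\infty$ (valid for compactly supported $v$ by homogeneity), handles the second. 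The outcome is
\[
|Y^2v|_{C^\alpha}\le c\,(\|Lv\|_{C^\alpha}+\|v\|_\infty).
\]

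\emph{Cutoff and globalization.} For arbitrary $u\in C^{2,\alpha}_Y(\mathbb{R}^N)$, choose cutoffs $\phi=\phi_{\bar z}$ equal to $1$ on $B(\bar z,r_0/2)$ and supported in $B(\bar z,r_0)$. These are obtained as left translates of a single fixed cutoff, so their $C^{2,\alpha}_Y$ norms are uniform in $\bar z$. Apply the local estimate to $v=\phi u$; then $L(\phi u)=\phi Lu+2a^{ij}Y_i\phi\,Y_ju+uL\phi$. This gives
\[
|Y^2u|_{C^\alpha(B(\bar z,r_0/2))}\le c\,(\|Lu\|_{C^\alpha}+\|u\|_{C^{1,\alpha}}+\|u\|_\infty)
\]
uniformly in $\bar z$. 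Next, bound $\|u\|_{C^{1,\alpha}}\le\varepsilon\|u\|_{C^{2,\alpha}}+c_\varepsilon\|u\|_\infty$ by global interpolation inequalities on Carnot groups, which follow from Taylor/mean-value formulas along horizontal curves. The global seminorm is then controlled by the local ones: pairs of points at distance $<r_0/2$ are covered by a single ball, while for distant pairs $|Y^2u(z)-Y^2u(z')|\le 2\|Y^2u\|_\infty\le c\,r_0^{-\alpha}\|Y^2u\|_\infty d(z,z')^\alpha$, and $\|Y^2u\|_\infty$ is again bounded by interpolation. Taking the supremum over $\bar z$ and absorbing the $\varepsilon$-terms gives (\ref{Schauder est Carnot eqn}). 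The absorption is legitimate because $u\in C^{2,\alpha}_Y$, so the norm being absorbed is finite.

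\emph{Main obstacle.} The delicate step is making the constant in the frozen singular-integral estimate uniform in $z_0$, that is, uniform over $\mathcal{M}_\Lambda$. This requires the uniform bounds on $\Gamma_{z_0}$ and its derivatives from Bramanti--Brandolini's theory. I would first try to invoke their result directly, and failing that, argue by compactness of $\mathcal{M}_\Lambda$ together with continuous dependence of $\Gamma_B$ on $B$.
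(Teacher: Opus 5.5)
Your proposal is correct and follows essentially the paper's route. You freeze the coefficients at the center of a small ball, use the fundamental solution $\Gamma_{\overline{x}}$ with bounds uniform over $\mathcal{M}_\Lambda$ (Theorem \ref{freezed fund sol}) together with the H\"older bound for singular integrals of Theorem \ref{C^alpha estimate for T_f phi, 3.2}, obtain the small-support estimate of Theorem \ref{local C^alpha estimate thm}, and then globalize with left-translated cutoffs, the interpolation inequality of Theorem \ref{Real interp ineq thm}, and absorption.

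The differences are minor. You use an extra interpolation inequality to handle $|a|_{C^\alpha_Y}\|Y^2v\|_{L^\infty}$. The paper (Lemma \ref{C^alpha estimate for L_x-L}) instead notes that $Y^2v$ vanishes outside $B_R(\overline{x})$, so $\|Y^2v\|_{L^\infty}\le cR^\alpha|Y^2v|_{C^\alpha_Y}$ is absorbed by the same smallness of $R$. Your explicit near/distant-pair argument for passing from local to global seminorms spells out a step the paper leaves implicit.
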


\vspace{2mm}

\hspace{-7mm} We will also prove the following version for higher order derivatives:

\vspace{2mm}

\begin{theorem}[Higher order estimates in Carnot groups]
    \label{global est higher derivs thm}
    Under the assumptions of Theorem \ref{1.4: Schauder est Carnot thm} and further requiring that, for some $k \geq 1$, $a_{ij} \in C^{k,\alpha}_Y(\mathbb{R}^N)$ for all $1 \leq i, \, j \leq m$, there exists a constant $c_k=c(k,\mathbb{G}, \Lambda, \alpha, ||a||_{k,Y})>0$ s.t., for every $u \in C^{2,\alpha}_Y(\mathbb{R}^N)$ with $Lu \in C^{k,\alpha}_Y(\mathbb{R}^N)$, there hold $u \in C^{k+2,\alpha}_Y(\mathbb{R}^N)$ and
    \begin{equation}
        \label{est high order derivs ineq} ||u||_{C^{k+2,\alpha}_Y(\mathbb{R}^N)} \leq c_k \left\{ ||Lu||_{C^{k,\alpha}_Y(\mathbb{R}^N)} + ||u||_{L^{\infty}(\mathbb{R}^N)}\right\}.
    \end{equation}
\end{theorem}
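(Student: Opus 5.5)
We argue by induction on $k\geq 1$, taking Theorem \ref{1.4: Schauder est Carnot thm} as the base case $k=0$. The idea is to differentiate the equation along the generators, which is clean in the group setting, and to work with difference quotients along the group to obtain the extra regularity. Assume the statement holds up to $k-1$, with $a_{ij}\in C^{k,\alpha}_Y(\mathbb{R}^N)$ and $Lu\in C^{k,\alpha}_Y(\mathbb{R}^N)$. By the inductive hypothesis (note $\|a\|_{k-1,Y}\le\|a\|_{k,Y}$), $u\in C^{k+1,\alpha}_Y(\mathbb{R}^N)$ with the corresponding bound. We fix a generator $Y_l$ and want to show $Y_l u\in C^{k+1,\alpha}_Y$. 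Formally,
\begin{equation*}
L(Y_l u)=Y_l(Lu)-(Y_l a^{ij})\,Y_iY_ju + a^{ij}\,[Y_l,Y_iY_j]u ,
\end{equation*}
where $[Y_l,Y_iY_j]=[Y_l,Y_i]Y_j+Y_i[Y_l,Y_j]$ is a sum of third-order monomials in the generators, since the commutators lie in the second layer of the stratified Lie algebra.

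The term $a^{ij}[Y_l,Y_iY_j]u$ involves derivatives of weighted order $3$ of $u$. By the inductive hypothesis these lie in $C^{k-1,\alpha}_Y$ only, one order short of what is needed to apply the $(k-1)$-estimate to $Y_l u$. This loss of an order is the main obstacle. I would handle it in one of two ways.

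\textbf{Option (a): differentiate along right-invariant fields.} I would differentiate with the right-invariant fields $Z_l$ instead of $Y_l$. These commute with every left-invariant $Y_i$, so
\begin{equation*}
L(Z_l u)=Z_l(Lu)-(Z_l a^{ij})Y_iY_ju .
\end{equation*}
However, $Z_l$ is not controlled by the $Y$'s globally, since $Z_l$ grows polynomially at infinity. This option therefore needs a localization.

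\textbf{Option (b): a local-to-global scheme (the one I would adopt).} Local higher-order Schauder estimates of the form
\begin{equation*}
\|u\|_{C^{k+2,\alpha}_Y(B(z_0,1))}\le c\{\|Lu\|_{C^{k,\alpha}_Y(B(z_0,2))}+\|u\|_{L^\infty(B(z_0,2))}\}
\end{equation*}
hold with constants independent of $z_0$, because left translations preserve the operator class and $\|a(z_0*\cdot)\|_{k,Y}=\|a\|_{k,Y}$. Such local estimates would be obtained from the representation formula through the frozen fundamental solution, as in the proof of Theorem \ref{1.4: Schauder est Carnot thm}, with cutoffs. Taking the supremum over $z_0$, and using that global $C^{k,\alpha}_Y$ norms are equivalent to sup of unit-ball norms plus sup norms of derivatives (the Hölder quotient for $d>1$ being bounded by twice the sup norm), gives \eqref{est high order derivs ineq}. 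On a unit ball around the origin one can use the $Z_l$ trick, since there $Z_l$ is a combination of $Y$-monomials with bounded polynomial coefficients.

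\textbf{Qualitative regularity.} To show that $u\in C^{k+2,\alpha}_Y$ at all, rather than just proving the a priori estimate, I would apply the $(k-1)$-estimate, in its local form, to the difference quotients $\Delta_h u(z)=\frac{u(z*\exp(hY_l))-u(z)}{h}$. This is the step where right translations commute appropriately with left-invariant operators after rewriting. Uniform bounds in $h$ together with Ascoli--Arzelà then yield $Y_l u\in C^{k+1,\alpha}_Y$, and hence $u\in C^{k+2,\alpha}_Y$. Finally, the interpolation inequality
\begin{equation*}
\|Y^{J}u\|_{\infty}\le\varepsilon\|u\|_{C^{k+2,\alpha}_Y}+c_\varepsilon\|u\|_\infty
\end{equation*}
is used to absorb the intermediate-order terms coming from $(Y a^{ij})Y_iY_ju$ into the left-hand side.
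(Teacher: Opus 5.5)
Your global architecture (induction on $k$, local estimates on balls of fixed radius made uniform by left-invariance, supremum over centers) is sound. It corresponds to the covering step in Theorem \ref{first part thm 1.6: estimates}. The gap is that everything rests on the local higher-order estimate, and you never give a mechanism for it that avoids the loss of an order you correctly identified. Arguing ``as in the proof of Theorem \ref{1.4: Schauder est Carnot thm}'' only produces $Y_hY_lu$ from $L_{\overline{x}}u$. To bound $Y_hY_lY_Iu$ by $k$ derivatives of $Lu$ you must transfer $k$ derivatives onto $L_{\overline{x}}u$, and that is exactly where non-commutativity enters.

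The $Z_l$ trick does not rescue this, not even on a unit ball. Writing $Z_l=\sum_j q_{lj}Y_j$ involves the higher-layer fields $Y_j$, $j>m$, which are $Y$-monomials of length up to the step. So $Z_l$ is not first order in the $C^{k,\alpha}_Y$ scale. For instance, in the Heisenberg group with $Y_1=\partial_{x_1}+2x_2\partial_{x_3}$ and $Y_2=\partial_{x_2}-2x_1\partial_{x_3}$, one has $Z_1=\partial_{x_1}-2x_2\partial_{x_3}=Y_1+x_2[Y_1,Y_2]$. Hence $L(Z_lu)=Z_l(Lu)-(Z_la^{ij})Y_iY_ju$ lies in $C^{k-1,\alpha}_Y$ only if $Lu$ and $a_{ij}$ are smoother than assumed. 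Moreover, a bound on $Z_lu$ does not isolate $Y_lu$. The loss is displaced, not removed.

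The paper never differentiates the equation. In Theorem \ref{rep formula higher derivs with Gamma^J} it works \emph{on the kernel}:
\begin{itemize}
\item it writes $Y_j=\sum_i r_{ij}Y_i^R$, and commutes $r_{ij}$ (homogeneous of degree $\sigma_i-1$) past $Y_i^R$ by homogeneity;
\item it expands each higher-layer $Y_i^R$ into monomials $Y^R_{h_1}\cdots Y^R_{h_{\sigma_i}}$;
\item it moves only the first factor onto $L_{\overline{x}}u$ via (\ref{IPP left-right invariant vf}), where it becomes $Y_{h_1}$.
\end{itemize}
The remaining $\sigma_i-1$ right-invariant derivatives stay on $r_{ij}\Gamma_{\overline{x}}$ and exactly offset the degree of $r_{ij}$. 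So the new kernels $\Gamma^{I;J}_{\overline{x}}$ are again $(2-Q)$-homogeneous with uniform bounds. Theorem \ref{C^alpha estimate for T_f phi, 3.2}, together with the second part of Lemma \ref{C^alpha estimate for L_x-L}, then yields Theorem \ref{local est high order der thm} with the correct derivative count.

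Your qualitative step fails for the same reason. The map $z\mapsto z*\exp(hY_l)$ is a \emph{right} translation, and left-invariant fields commute only with left translations. With $w=\exp(hY_l)$ one has $Y_i(u(\cdot*w))=((\mathrm{Ad}_{w^{-1}}Y_i)u)(\cdot*w)$ and $\mathrm{Ad}_{w^{-1}}Y_i=Y_i-h[Y_l,Y_i]+O(h^2)$. So $L\Delta_hu-\Delta_h(Lu)$ contains the third-order term $-a^{ij}([Y_l,Y_i]Y_j+Y_i[Y_l,Y_j])u(\cdot*w)$, which does not vanish as $h\to0$. Using left translations instead would give difference quotients of $Z_lu$ and bring back the previous problem.

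The paper obtains $u\in C^{k+2,\alpha}_Y(\mathbb{R}^N)$ differently. It quotes the local regularity theorem of Bramanti--Brandolini to get $u\in C^{k+2,\alpha}_{Y,loc}$. It then applies the global a priori estimate to $u\phi_R$, with the cutoffs of Lemma \ref{cutoff function lemma} whose $Y$-derivatives are bounded uniformly in $R$, and lets $R\to\infty$.
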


\vspace{2mm}

The structure of the paper is as follows. In Sections \ref{sec 2}-\ref{sec 4} we always work under the further assumption that a Carnot group structure is available, namely we build the tools for the proof of Theorems \ref{1.4: Schauder est Carnot thm} and \ref{global est higher derivs thm}. In particular, in section \ref{sec 2} we introduce basic notions and known results about Carnot groups, as well as the associated notation; we also collect some lemmas that will come at hand repeatedly during the procedure. In Section \ref{sec 3} we prove a local version of the desired Schauder estimate in Carnot groups, which is then turned into (\ref{Schauder est Carnot eqn}) through interpolation inequalities and cutoff functions. In section \ref{sec 4}, suitable representation formulas for derivatives of smooth functions are deduced, and then exploited to obtain Hölder estimates for higher order derivatives. Finally, in section \ref{sec 5} we extend all the above results to our original setting (that is, when no group of translations is available) by means of the abovementioned lifting method, culminating with the proof of Theorem \ref{global C^alpha estimate thm}.

\vspace{5mm}

\begin{acknowledgement}
    The author would like to thank his PhD supervisor, Marco Bramanti, for introducing him to this topic, sharing relevant background material, and for valuable advice throughout the development of this work.
\end{acknowledgement}

\vspace{5mm}

\section{Preliminaries on Carnot groups}
\label{sec 2}
From now on until the end of Section \ref{sec 4}, we assume that $\mathbb{R}^N$ is endowed with a Carnot group structure $\mathbb{G}=(\mathbb{R}^N, *, D_{\lambda})$, namely
\begin{itemize}
    \item [(i)] $*$ is a Lie group law on $\mathbb{R}^N$ (to which we refer as \say{translation}),
    \item [(ii)] $\{D_{\lambda}\}_{\lambda>0}$ is a family of automorphisms of $(\mathbb{R}^N,*)$ (called \say{dilations}), acting as
    \begin{equation*}
        D_{\lambda}(x)=(\lambda^{\sigma_1} x_1,..., \lambda^{\sigma_N} x_N) \quad \text{for suitable integers} \: 1 = \sigma_1 \leq ... \leq \sigma_N.
    \end{equation*}
    \item [(iii)] the Lie algebra $\mathfrak{a}$ of left-invariant vector fields on $(\mathbb{R}^N,*)$ admits a stratification $\mathfrak{a}=\mathfrak{a}_1 \oplus \mathfrak{a}_2 \oplus \, ... \, \oplus \mathfrak{a}_s$, where: $\mathfrak{a}_1$ is the span of $m$ linearly independent vector fields $Y_1, \, ..., \, Y_m$, which are $1$-homogeneous w.r.t. $\{D_{\lambda}\}_{\lambda}$; $\mathfrak{a}_k = [\mathfrak{a}_1, \mathfrak{a}_{k-1}]$ for every $2 \leq k \leq s$ and $[\mathfrak{a}_1, \mathfrak{a}_s]=0$.
\end{itemize}
We recall that condition (iii) can be equivalently stated by asking that there exist smooth vector fields $Y_1,..., Y_m$ which are left-invariant w.r.t. $*$, $1$-homogeneous w.r.t. $\{D_{\lambda}\}_{\lambda}$ and satisfy Hörmander condition in $\mathbb{R}^N$ (that is, together with their commutators of arbitrary order, they span $\mathbb{R}^N$ at any point of $\mathbb{R}^N$ itself).
The vector fields $Y_1, \, ..., \, Y_m$ are called \emph{(Lie) generators} of $\mathbb{G}$, whereas $s \geq 1$ is called \emph{step} of $\mathbb{G}$ and stands for the length of commutators of $Y_1, \, ..., \, Y_m$ that are required to fulfill Hörmander condition at any point. 
We also denote by
\begin{equation*}
    Q = \sum_{i=1}^N \sigma_i
\end{equation*}
the \emph{homogeneous dimension} of $\mathbb{G}$, which we assume to be strictly larger than $2$.

Let also $Y_{m+1},...,Y_N$ be smooth vector fields on $\mathbb{R}^N$ such that $\mathcal{A}=\{Y_1,...,Y_N\}$ is a basis of $\mathfrak{a}$ with the following structure:
\begin{equation*}
    \mathcal{A}=\mathcal{A}_1 \cup \, ... \, \cup \mathcal{A}_s: \: \mathcal{A}_1=\{Y_1, \, ..., \, Y_m\}, \: \mathcal{A}_i \: \text{basis of} \: \mathfrak{a}_i \: \forall 1 \leq i \leq s.
\end{equation*}
For all $1 \leq i \leq N$, we denote by $Y_i^R$ the (unique) right-invariant vector field agreeing with $Y_i$ at the origin of $\mathbb{R}^N$, which is clearly homogeneous of the same degree as $Y_i$, namely $\sigma_i$. Given any multindex $I=(i_1, \, i_2, \, ..., \, i_h)$ with $i_1, \, i_2, \, ..., \, i_h \in \{1, \, 2, \, ..., \, N\}$, we denote by $Y_I$ the differential operator $Y_{i_1} \, Y_{i_2} \,... \,Y_{i_h}$; analogously, $Y_I^R$ stands for $Y_{i_1}^R \, ... \, Y_{i_h}^R$. According to the usual notation, we also denote by $l(I)=h$ the length of $I$ and by $|I|=\sigma_{i_1}+...+\sigma_{i_h}$ its \say{weight} (that is, the degree of homogeneity of the operator $Y_I$); we point out that, whenever a multindex $I$ has entries only in $\{1,...,m\}$ (which will almost always be our case) the notions of length and weight coincide, thus we will just employ the symbol $|I|$. For homogeneity of notation, when convenient we will use the symbol $Y_I$ also in the case $|I|=0$, implicitly meaning that $Y_I$ is the identity operator, i.e. $Y_I u \equiv u$ for any function $u$.

We recall some useful results based on invariance and homogeneity:

\vspace{2mm}

\begin{proposition}
\label{prop: translations and dilations}
    For all $1 \leq i \leq N$ and functions $f, \, g: \mathbb{R}^N \to \mathbb{R}$ for which the following integrals converge absolutely, there holds:
    \begin{equation}
    \label{IPP left-right invariant vf}
        \int_{\mathbb{R}^N} (Y_i^R f)(y^{-1} * x) \, g(y) \, dy = \int_{\mathbb{R}^N}f(y^{-1} * x) \, (Y_i g)(y) \, dy.
    \end{equation}
    Moreover, if $f$ is a smooth function, homogeneous of degree $\beta \geq 0$, and $j \in \{1, \, ..., \, N\}$ is such that $\sigma_j>\beta$, then $Y_j$ and $f$ commute, namely
         \begin{equation}
         \label{Y_j and r_ij commute by homogeneity}
             Y_j(fg)=f \cdot Y_jg \hspace{5mm} \forall g \in C^{\infty}(\mathbb{R}^N).
         \end{equation}
\end{proposition}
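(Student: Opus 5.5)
The proof splits into two independent parts, one for each claim.

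\emph{Integration by parts formula (\ref{IPP left-right invariant vf}).} We use the characterization of $Y_i^R$ as a derivative along left translation by the one-parameter subgroup $\gamma(t)=\exp(tY_i)$. A right-invariant vector field agreeing with $Y_i$ at $0$ generates the flow $x \mapsto \gamma(t)*x$, so
\begin{equation*}
(Y_i^R f)(w) = \frac{d}{dt}\Big|_{t=0} f(\gamma(t)*w).
\end{equation*}
The left-invariant field $Y_i$ instead generates $y \mapsto y*\gamma(t)$, so $(Y_i g)(y)=\frac{d}{dt}|_{t=0} g(y*\gamma(t))$. Take $w=y^{-1}*x$. Then
\begin{equation*}
\gamma(t)*y^{-1}*x = (y*\gamma(-t))^{-1}*x .
\end{equation*}
It follows that the left side of (\ref{IPP left-right invariant vf}) is $\frac{d}{dt}|_{t=0}\int f((y*\gamma(-t))^{-1}*x)\,g(y)\,dy$, after justifying the exchange of derivative and integral.

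Next we change variables $y'=y*\gamma(-t)$. Lebesgue measure is the bi-invariant Haar measure on a Carnot group (nilpotent, hence unimodular), so the Jacobian is $1$. The integral becomes $\int f(y'^{-1}*x)\,g(y'*\gamma(t))\,dy'$. Differentiating at $t=0$ gives the right side.

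The main technical point is justifying the differentiation under the integral sign. Under the stated absolute convergence assumption, I would first prove the identity for $f,g\in C_0^\infty$ and then argue by density. Alternatively, one can write the difference quotient and use dominated convergence. I will treat this as routine, in the sense of "for which the integrals make sense".

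\emph{Commutation property (\ref{Y_j and r_ij commute by homogeneity}).} By the Leibniz rule, $Y_j(fg)=f\,Y_jg+g\,Y_jf$, so it suffices to show $Y_jf\equiv 0$. Since $Y_j$ is homogeneous of degree $\sigma_j$ and $f$ is homogeneous of degree $\beta$, the function $Y_jf$ is smooth and $D_\lambda$-homogeneous of degree $\beta-\sigma_j<0$:
\begin{equation*}
(Y_jf)(D_\lambda z)=\lambda^{\beta-\sigma_j}(Y_jf)(z).
\end{equation*}
Letting $\lambda\to0^+$, the left side tends to $(Y_jf)(0)$ by continuity, while the right side blows up unless $(Y_jf)(z)=0$. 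Hence $Y_jf\equiv0$ everywhere.

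Here "smooth" must mean smooth on all of $\mathbb{R}^N$, including the origin. That is the setting needed for this argument, and it is the point I would take care to state explicitly.
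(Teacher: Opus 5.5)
Your proof is correct in substance. On the first claim it takes a different route from the paper; on the second it is a streamlined form of the paper's argument.

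For (\ref{IPP left-right invariant vf}) the paper gives no argument and simply cites \cite[Proposition 3.47]{librone}. Your derivation is self-contained, and each ingredient checks out: $Y_i^R$ generates $w\mapsto\gamma(t)*w$, $Y_i$ generates $y\mapsto y*\gamma(t)$, $\gamma(t)*y^{-1}*x=(y*\gamma(-t))^{-1}*x$, and Lebesgue measure is right-invariant (translations have a triangular Jacobian with determinant $1$ in graded coordinates). An equivalent infinitesimal version notes that $Y_i$ acting in $y$ on $f(y^{-1}*x)$ gives $-(Y_i^Rf)(y^{-1}*x)$, and then integrates by parts using $Y_i^*=-Y_i$.

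For (\ref{Y_j and r_ij commute by homogeneity}) the paper works in coordinates. Writing $Y_j=\sum_i b_{ij}\partial_{x_i}$, the coefficient $b_{ij}$ is $(\sigma_i-\sigma_j)$-homogeneous and so vanishes when $\sigma_i<\sigma_j$. Meanwhile $\partial_{x_k}f$ is smooth and homogeneous of degree $\beta-\sigma_k<0$, so it vanishes when $\sigma_k\geq\sigma_j$. Hence every term of $Y_jf$ is zero. You apply the same principle (smooth and homogeneous of negative degree forces vanishing) once, directly to $Y_jf$. That is shorter and makes clear that only the $\sigma_j$-homogeneity of the operator is used. So it covers $Y_j^R$ too, which is the form actually invoked in the proof of Theorem \ref{rep formula higher derivs with Gamma^J}. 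The coordinate version gives slightly more (that $f$ depends only on variables of weight $<\sigma_j$), but this is never needed.

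One caution about the step you call routine. Under the literal hypothesis that both integrals converge absolutely, (\ref{IPP left-right invariant vf}) is false, so no density argument can recover it from the $C^\infty_0$ case. Take the abelian Carnot group $\mathbb{R}^3$, where $Y_k=Y_k^R=\partial_{x_k}$ and $Q=3$. Let $f(z)=\arctan z_1$ and $g(y)=\psi(y_2,y_3)$ with $0\leq\psi\in C^\infty_0(\mathbb{R}^2)$, $\psi\not\equiv0$. Both integrals converge absolutely. The right-hand side vanishes since $Y_1g=0$, but the left-hand side equals $\pi\int\psi\,dy_2\,dy_3$: a boundary term at infinity is lost.

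You should instead assume something explicit. For example: $g$ compactly supported with $Y_ig$ continuous, and $f\in C^1(\mathbb{R}^N\setminus\{0\})$ with $f,\,Y_i^Rf\in L^1_{loc}(\mathbb{R}^N)$. This covers every use in the paper, where $g$ is $L_{\overline{x}}u$ or a derivative of it with $u$ compactly supported, and $f$ is a kernel smooth off the origin and homogeneous of degree $2-Q$. Under these hypotheses both exchanges of $\frac{d}{dt}$ and the integral are justified.
\begin{itemize}
\item On the $g$ side, dominated convergence applies, since the difference quotients are uniformly bounded and supported in a fixed compact set.
\item On the $f$ side, for $w$ off the null set $\{\gamma(s):s\in\mathbb{R}\}$ one has $t^{-1}[f(\gamma(t)*w)-f(w)]=t^{-1}\int_0^t(Y_i^Rf)(\gamma(s)*w)\,ds$. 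This tends to $Y_i^Rf$ in $L^1_{loc}$ by continuity of translations in $L^1_{loc}$.
\end{itemize}
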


\vspace{2mm}


\begin{proof}
    For the proof of (\ref{IPP left-right invariant vf}) see \cite[Proposition 3.47]{librone}. As for (\ref{Y_j and r_ij commute by homogeneity}), note that for all $k \in \{1,...,N\}$ with $\sigma_k \geq \sigma_j$ the function $\partial_{x_k} f$ is smooth and homogeneous of degree $\beta-\sigma_k<0$, thus it identically vanishes (otherwise it would be unbounded at the origin, contradicting the smoothness assumption on $f$). On the other side, denoting $Y_j=\sum_{i=1}^N b_{ij}(x) \, \partial_{x_i}$, the $\sigma_j$-homogeneity of $Y_j$ implies the $(\sigma_i-\sigma_j)$-homogeneity of $b_{ij}$ (for any $i$), which in turn yields $b_{ij} \equiv 0$ for all $i \in \{1,...,N\}$ s.t. $\sigma_i<\sigma_j$. Therefore $b_{ij} \not\equiv 0$ only if $\sigma_i \geq \sigma_j>\beta$, so
    \begin{equation*}
        (Y_j f)(x) = \sum_{i:\, \sigma_i \geq \sigma_j} b_{ij}(x) \, \partial_{x_i}f(x)=0 \quad \forall x \in \mathbb{R}^N,
    \end{equation*}
    and the thesis follows by Leibniz rule.
\end{proof}

\vspace{2mm}

For the sake of developing a Schauder theory in Hölder spaces, we must define suitable Hölder spaces that are coherent with our group structure. To this aim, we recall that a Carnot group can be \say{naturally} endowed with a distance, that is the so called \emph{control distance} induced by the generators $Y_1,...,Y_m$, of which we now briefly give the definition. 

\vspace{2mm}

\begin{definition}
    Denoted by $C_{x,y}(\delta)$ the set of absolutely continuous curves $\gamma:[0,1]\to \mathbb{R}^N$ with $\gamma(0)=x, \, \gamma(1)=y$ and
\begin{equation*}
     \gamma'(t)=\alpha^i(t) \, (Y_i)_{\gamma(t)} \hspace{5mm} \text{for a.e. } t \in (0,1), \, \text{with} \, ||\alpha^i||_{L^{\infty}(0,1)}<\delta \:\,\, \forall 1 \leq i \leq m
\end{equation*}
(for any $x,y \in \mathbb{R}^N, \, \delta>0$), the \emph{control distance} (also called \emph{Carnot-Caratheodory distance}, or in short \emph{CC distance}) induced by $Y_1,...,Y_m$ is defined as the map $d_Y(x,y):=\inf\{\delta>0: C_{x,y}(\delta) \neq \emptyset\}$ for all $x,y \in \mathbb{R}^N$.
\end{definition}

\vspace{2mm}

Whenever $Y_1,...,Y_m$ satisfy Hörmander condition in the whole space, it is well known (see \cite[Theorems 1.45-3.54]{librone}) that $d_Y$ is indeed a distance on $\mathbb{R}^N$, as well as that it is left-invariant w.r.t. translations induced by $*$ and $1$-homogeneous w.r.t. dilations $D_{\lambda}$ (i.e., coherent with the group structure), namely:
\begin{align}
    & \label{d_Y is left inv} d_Y(z * x, z * y) = d_Y(x,y) \hspace{5mm} \forall x, \, y, \, z \in \mathbb{R}^N;
    \\
    & \label{d_Y is homogeneous}
    d_Y(D_{\lambda}(x), D_{\lambda}(y)) = \lambda \, d_Y(x,y) \hspace{5mm} \forall x, \, y \in \mathbb{R}^N, \, \lambda>0.
\end{align}
It is straightforward that the induced map $||x||_Y:=d_Y(x,0)$ is a \emph{homogeneous norm} on $\mathbb{G}$, according to the following definition:

\vspace{2mm}

\begin{definition}
\label{def homogeneous norm}
    A \emph{homogeneous norm} on the Carnot group $\mathbb{G}$ is a continuous map $||\cdot || : \mathbb{R}^N \to [0,+\infty)$ satisfying the following properties:
    \begin{itemize}
        \item[(i)] $||x||=0 \iff x=0$;
        \item [(ii)] $||D_{\lambda}(x)||=\lambda ||x||$ for every $x \in \mathbb{R}^N, \, \lambda>0$;
        \item [(iii)] there exists a constant $c \geq 1$ such that $||x|| \leq c||x^{-1}||$ for all $x \in \mathbb{R}^N$;
        \item [(iv)] there exists a constant $\kappa \geq 1$ for which $||x*y|| \leq \kappa(||x||+||y||)$ for all $x,y \in \mathbb{R}^N$ (\emph{quasi-triangle inequality}).
    \end{itemize}
\end{definition}

\vspace{2mm}

\hspace{-7mm}
 We point out, however, that the CC distance $d_Y$ (and hence also $||\cdot||_Y$) need not be smooth, which could lead to regularity issues. For this reason, we introduce a smooth homogeneous norm equivalent to the latter:
\begin{equation}
\label{smooth homogeneous norm}
    ||\cdot|| : \mathbb{R}^N \to [0,+\infty), \quad ||x||=\left(  \sum_{i=1}^{N}|x_{i}|^{2\sigma_{N}!/\sigma_{i}}\right)  ^{1/(2\sigma_{N}!)},
\end{equation}
where the $\sigma_i$'s are the exponents of the dilations $\{D_{\lambda}\}_{\lambda>0}$. \\It is immediate to check that the above is indeed a homogeneous norm, smooth outside the origin. The equivalence of $||\cdot||_Y$ and $||\cdot||$ follows by the equivalence of any couple of homogeneous norms on $\mathbb{R}^N$ (see \cite[Theorem 3.12]{librone}).\\
We will perform most of our computations w.r.t. the norm $||\cdot||$, and then \say{translate} the results in terms of the CC norm and distance just by equivalence. The \emph{quasisymmetric quasidistance} induced by $||\cdot||$ and the associated balls will be respectively denoted by $d(\cdot,\cdot)$, which acts as 
\begin{equation*}
    d(x,y):=||y^{-1}*x||, \quad x,y \in \mathbb{R}^N,
\end{equation*}
and $B_r(x_0)$ (for any $r>0$ and $x_0 \in \mathbb{R}^N$).
We recall that $d(\cdot,\cdot)$ has the following properties:
\begin{itemize}
    \item [(i')] $d(x,y) \geq 0$ for every $x, \, y \in \mathbb{R}^N$ and $d(x,y)=0 \iff x=y$;
    \item [(ii')] $d(D_{\lambda}(x), D_{\lambda}(y))=\lambda d(x,y)$ for all $x, \, y \in \mathbb{R}^N, \, \lambda>0$;
    \item [(iii')] there exists $c \geq 1$ such that $d(x,y) \leq c d(y,x)$ for all $x, \, y \in \mathbb{R}^N$;
    \item [(iv')] there exists a constant $\kappa \geq 1$ such that $d(x,y) \leq \kappa (d(x,z)+d(z,y))$ for all $x, \, y, \, z \in \mathbb{R}^N$ (\emph{quasi-triangle inequality}).
\end{itemize}
((i')-(iii')-(iv') being the usual definition of quasisymmetric quasidistance, (ii') following directly by homogeneity of $||\cdot||$).

\vspace{2mm}

\hspace{-7mm} We recall a sort of \say{Lagrange theorem} holding true in Carnot groups, w.r.t. any homogeneous quasisymmetric quasidistance:

\vspace{2mm}

\begin{lemma} \label{UGE Lemma 7.6}
    There exists a constant $\overline{c}=\overline{c}(\mathbb{G})>0$ such that for every $u \in C^1_Y(\mathbb{R}^N)$ and every $x, \, x' \in \mathbb{R}^N$ there holds
    \begin{equation*}
        |u(x)-u(x')| \leq \overline{c} \, d(x,x') \, \sup_{d(x',x'') \leq \overline{c}d(x,x')} \sum_{i=1}^m |Y_i u(x'')|.
    \end{equation*}
\end{lemma}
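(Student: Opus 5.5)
We argue through the control distance $d_Y$ and pass to the smooth quasidistance $d$ only at the end, using the equivalence of homogeneous norms. Set $r:=d(x,x')$. By the equivalence of $\|\cdot\|$ and $\|\cdot\|_Y$ there is $c_1=c_1(\mathbb{G})\geq 1$ with $c_1^{-1}d(x,y)\leq d_Y(x,y)\leq c_1 d(x,y)$ for all $x,y$; note that $d_Y$ is a genuine symmetric distance. Hence $d_Y(x,x')\leq c_1 r$. If $r=0$ there is nothing to prove, so assume $r>0$.

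Fix $\varepsilon>0$. By definition of $d_Y$ there is an absolutely continuous curve $\gamma:[0,1]\to\mathbb{R}^N$ with $\gamma(0)=x'$ and $\gamma(1)=x$, satisfying $\gamma'(t)=\alpha^i(t)(Y_i)_{\gamma(t)}$ a.e. and $\|\alpha^i\|_{L^\infty(0,1)}<\delta:=d_Y(x,x')+\varepsilon$. We orient the curve from $x'$ so that its points are controlled in terms of distance from $x'$.

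\begin{itemize}
\item \emph{Where the curve lies.} For each $t$, the restriction $\gamma|_{[0,t]}$ reparametrized on $[0,1]$ has controls $t\alpha^i(ts)$, so it lies in $C_{x',\gamma(t)}(t\delta)$. Therefore $d_Y(x',\gamma(t))\leq\delta$, and
\begin{equation*}
d(x',\gamma(t))\leq c_1\delta\leq c_1(c_1 r+\varepsilon).
\end{equation*}
\item \emph{The derivative along the curve.} For $u\in C^1_Y$, the function $t\mapsto u(\gamma(t))$ is absolutely continuous with derivative $\alpha^i(t)\,(Y_iu)(\gamma(t))$ a.e. For $u\in C^1$ in the Euclidean sense this is the chain rule. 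If $C^1_Y$ only means that the $Y_iu$ exist and are continuous, we first mollify by group convolution, $u_\eta=u*\phi_\eta$. Left-invariant vector fields commute with right convolution, so $Y_iu_\eta=(Y_iu)*\phi_\eta\to Y_iu$ locally uniformly, and we pass to the limit in the integrated identity. This is the one technical point, but it is routine.
\end{itemize}

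Integrating the derivative along the curve gives
\begin{equation*}
|u(x)-u(x')|\leq\int_0^1\sum_{i=1}^m|\alpha^i(t)|\,|Y_iu(\gamma(t))|\,dt\leq\delta\sup_{d(x',x'')\leq c_1(c_1r+\varepsilon)}\sum_{i=1}^m|Y_iu(x'')|.
\end{equation*}

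Since $\delta\leq c_1r+\varepsilon$, letting $\varepsilon\to0$ is harmless: for $\varepsilon\leq r$ the supremum is taken over the smaller set $\{d(x',x'')\leq c_1(c_1+1)r\}$, and the prefactor tends to $c_1 r$. Taking $\overline{c}:=c_1(c_1+1)$ yields the claim.

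If one also wants the curve to be admissible when $u$ is merely defined via distributional derivatives, the mollification argument above covers it. The main (mild) obstacle is only this justification of the chain rule along horizontal curves; everything else is a direct consequence of the definition of $d_Y$ and the equivalence of homogeneous norms.
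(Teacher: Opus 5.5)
The paper does not prove this lemma; it only refers to \cite[Lemma 7.6]{UGE}. Your argument is a correct, self-contained proof. You work directly with the control distance, which the paper never uses for this step:
\begin{itemize}
\item a horizontal curve from $x'$ to $x$ with controls below $\delta=d_Y(x,x')+\varepsilon$ stays in the $d_Y$-ball of radius $\delta$ about $x'$, by restriction and reparametrization;
\item integrating $\frac{d}{dt}u(\gamma(t))=\alpha^i(t)\,Y_iu(\gamma(t))$ gives the bound with prefactor $\delta$;
\item left invariance, $d_Y(x,y)=\|y^{-1}*x\|_Y$, lets the equivalence of homogeneous norms turn $d_Y$ into $d$ with the same order of arguments, which is what the quasisymmetric $d$ in the statement needs.
\end{itemize}

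The classical proof of this kind of Lagrange inequality on stratified groups (Folland--Stein's mean value theorem) is different. It writes $x'^{-1}*x$ as a product of a bounded number $M$ of horizontal exponentials $\exp(t_kY_{j_k})$ with $|t_k|\leq c\,d(x,x')$, which is a quantitative connectivity lemma proved by homogeneity and compactness. It then applies the one-variable mean value theorem on each factor.
\begin{itemize}
\item That route pays for the factorization lemma. In exchange it differentiates $u$ only along integral curves of single $Y_j$, where the derivative is $Y_ju$ by definition, so it needs no chain rule and no approximation even if $u$ is only horizontally $C^1$.
\item Your route is shorter and gives the explicit constant $\overline{c}=c_1(c_1+1)$ in terms of the norm-equivalence constant. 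Its cost is the chain rule along arbitrary absolutely continuous horizontal curves.
\end{itemize}

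There is one slip in that chain-rule step, and it needs fixing. With the usual convention $(f*g)(x)=\int f(y)\,g(y^{-1}*x)\,dy$, a left-invariant field falls on the right factor, so $Y_i(u*\phi_\eta)=u*(Y_i\phi_\eta)$. On the other hand $(Y_iu)*\phi_\eta=u*(Y_i^R\phi_\eta)$ (compare (\ref{IPP left-right invariant vf})). Hence $Y_i(u*\phi_\eta)\neq (Y_iu)*\phi_\eta$ in a non-commutative group, and your claimed identity fails as written.

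The fix is to mollify on the other side: take $u_\eta:=\phi_\eta*u$, i.e.\ average $u$ over left translates, which do commute with the $Y_i$. Then:
\begin{itemize}
\item $u_\eta(x)=\int\phi_\eta(x*z^{-1})\,u(z)\,dz$ is smooth;
\item $Y_iu_\eta=\phi_\eta*(Y_iu)\to Y_iu$ locally uniformly;
\item differentiating under the integral is justified by the one-dimensional mean value theorem along $s\mapsto w*\exp(sY_i)$ and dominated convergence.
\end{itemize}
With this change your limit argument goes through unchanged. Also note that the paper only applies the lemma to the smooth cutoff $\theta$, so for its purposes the Euclidean chain rule already suffices.
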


\vspace{2mm}

\hspace{-7mm} For the proof see \cite[Lemma 7.6]{UGE}.

\vspace{2mm}

\hspace{-7mm} We now introduce Hölder spaces induced by our generators $Y_1,...,Y_m$.

\vspace{2mm}

\begin{definition}
\label{def Hölder space}
    Let $\Omega \subseteq \mathbb{R}^N$, $\alpha \in (0,1)$ and $k \geq 1$ be an integer. We denote by $C^{\alpha}_Y(\Omega) \equiv C^{0,\alpha}_Y(\Omega)$ the set of bounded functions $u$ over $\Omega$ that satisfy
    \begin{equation*}
        |u|_{C^{\alpha}_Y(\Omega)} := \sup_{x,\, x' \in \Omega, \, x \neq x'} \frac{|u(x)-u(x')|}{d_Y(x,x')^{\alpha}} < +\infty,
    \end{equation*}
    and by $C^{k,\alpha}_Y(\Omega)$ the set of functions $u \in C^{\alpha}_Y(\Omega)$ such that, for every $|I| \leq k$, the directional derivative $Y_I u$ exists (pointwise) and belongs to $C^{\alpha}_Y(\Omega)$.
    For such functions, we also denote:
    \begin{align*}
        & ||u||_{C^{\alpha}_Y(\Omega)} := ||u||_{L^{\infty}(\Omega)} + |u|_{C^{\alpha}_Y(\Omega)}; \\
        & ||u||_{C^{k,\alpha}_Y(\Omega)}:= ||u||_{C^{\alpha}_Y(\Omega)} + \sum_{1 \leq |I| \leq k} ||Y_I u||_{C^{\alpha}_Y(\Omega)}.
    \end{align*}
    Finally, the notation $C^{k,\alpha}_{0,Y}(\Omega)$ will stand for the space of functions  $u \in C^{k,\alpha}_Y(\Omega)$ compactly supported in $\Omega$.
\end{definition}

\vspace{2mm}

\begin{notation}
    When convenient, we will use the following compact notation:
    \begin{align*}
        & ||D^k u||_{L^{\infty}(\Omega)} := \sum_{|I|=k} ||Y_I u||_{L^{\infty}(\Omega)}; \quad |D^k u|_{C^{\alpha}_Y(\Omega)} := \sum_{|I|=k} |Y_I u|_{C^{\alpha}_Y(\Omega)}; \\
        & ||D^k u||_{C^{\alpha}_Y(\Omega)} := \sum_{|I|=k} ||Y_I u||_{C^{\alpha}_Y(\Omega)}
    \end{align*}
    (for any $k \geq 1$, provided all the above quantities are well-defined).
\end{notation}

\vspace{2mm}

Fixed $\Lambda \in (0,1)$ and $\alpha \in (0,1)$, let $A(x)=[a_{ij}(x)] \in \mathcal{M}_{\Lambda}$ for all $x \in  \mathbb{R}^N$ with entries $a_{ij} \in C^{\alpha}_Y(\mathbb{R}^N)$ for any $1 \leq i,j \leq m$; we denote
\begin{equation*}
    ||a||_{0,Y} := \max_{1 \leq i,\, j \leq m} ||a_{ij}||_{C^{\alpha}_Y(\mathbb{R}^N)}; \quad ||a||_{k,Y} := \max_{1 \leq i,\, j \leq m} ||a_{ij}||_{C^{k,\alpha}_Y(\mathbb{R}^N)},\: k \geq 1
\end{equation*}
(whenever the above quantities are well defined).
We focus on the operator
\begin{equation}
\label{our operator L_Y Carnot}
    L=a^{ij}(x) \, Y_i Y_j.
\end{equation}

\vspace{2mm}

For any $k \geq 0$, it is well known $C^{k,\alpha}_Y(\Omega)$ is a vector space and moreover, given $f, \, g \in C^{k,\alpha}_Y(\Omega)$, there holds $fg \in C^{k,\alpha}_Y(\Omega)$ and
\begin{equation}
\label{Hölder norm of a product}
    ||fg||_{C^{k,\alpha}_Y(\Omega)} \leq ||f||_{C^{k,\alpha}_Y(\Omega)} ||g||_{C^{k,\alpha}_Y(\Omega)}.
\end{equation}
An immediate consequence is that $Lu \in C^{\alpha}_Y(\Omega)$ whenever $u \in C^{2,\alpha}_Y(\Omega)$. Furthermore, for any $u \in C^{\alpha}_Y(B_R(\eta))$ vanishing at a point $z \in B_R(\eta)$, it holds
\begin{equation*}
        |u(x)|=|u(x)-u(z)| \leq c \, d(x,z)^{\alpha} |u|_{C^{\alpha}_Y(B_R(\eta))} \leq c \, R^{\alpha} |u|_{C^{\alpha}_Y(B_R(\eta))},
    \end{equation*}
    and so, for whatever fixed $R_0$ and any $0<R \leq R_0$:
    \begin{align}
    \label{Hölder norm vs seminorm for u that vanishes at a point}
        ||u||_{L^{\infty}(B_R(\eta))} \leq cR^{\alpha} |u|_{{C^{\alpha}_Y(B_R(\eta))}}; \quad ||u||_{C^{\alpha}_Y(B_R(\eta))} \leq c'(R_0)|u|_{C^{\alpha}_Y(B_R(\eta))}.
    \end{align}

    \vspace{2mm}





\vspace{2mm}

Given $\overline{x} \in \mathbb{R}^N$, let us consider the following operator with constant coefficients $a_{ij}$ (obtained by \say{freezing} the coefficients $a_{ij}(\cdot)$ of $L$ in $\overline{x}$):
\begin{align}
\label{our freezed operator Carnot}
    & L_{\overline{x}} = a^{ij}(\overline{x}) \, Y_i Y_j.
\end{align}
It is immediate that also $L_{\overline{x}}u \in C^{\alpha}_Y(\Omega)$ for every $\Omega \subset \mathbb{R}^N, \, u \in C^{2,\alpha}_Y(\Omega)$.
\\
We stress that $L_{\overline{x}}$ belongs to a class of operators for which many results are well known in literature, namely $L_{\overline{x}}$ is a \emph{sublaplacian} on the Carnot group $\mathbb{G}$ (modulo a change of variables): indeed, denoting by $B=[b_{ij}(\overline{x})]_{i,j}$ the unique symmetric positive-definite matrix such that $B^2=A(\overline{x})$ and setting
    \begin{equation*}
        W_i :=  b^{ij}(\overline{x}) \, Y_j \hspace{5mm} 1 \leq i \leq m,
    \end{equation*}
     we get $L_{\overline{x}} = \sum_{i=1}^m W_i^2$, being $W_1, \, ..., \, W_m$ Lie-generators of $\mathbb{G}$ by construction. Then the following deep result holds true for $L_{\overline{x}}$:

\vspace{2mm}

\begin{theorem}[Fundamental solution of $L_{\overline{x}}$]
    \label{freezed fund sol}
    $L_{\overline{x}}$ admits a \emph{fundamental solution} $\Gamma_{\overline{x}}$, namely a function $\Gamma_{\overline{x}}:\mathbb{R}^{N} \setminus \{0\} \to \mathbb{R}$ that admits continuous derivatives up to order $2$ along $\{Y_i\}_{i=1}^m$ (in its domain of definition), solves the equation $L_{\overline{x}}u=0$ outside the origin and satisfy
    \begin{equation}
    \label{rep formula u: L Gamma = delta}
        u(x) = \int_{\mathbb{R}^N} \Gamma_{\overline{x}}(y^{-1} * x) \, L_{\overline{x}}u(y) \, dy \quad \forall u \in C^{\infty}_0(\mathbb{R}^N).
    \end{equation}
 Moreover, $\Gamma_{\overline{x}}$ is smooth outside the origin, $(2-Q)$-homogeneous and satisfies the following uniform estimates:
 \begin{equation*}
     \Theta_{\Gamma_{\overline{x}},p} := \sup_{|I| \leq p} \, \sup_{||w||=1} |Y_I \Gamma_{\overline{x}}(w)| \leq c(\mathbb{G}, \Lambda, p) \hspace{5mm} \forall p \geq 0.
 \end{equation*}
\end{theorem}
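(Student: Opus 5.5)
The plan is to reduce to the classical theory of homogeneous sublaplacians on Carnot groups (Folland) via the linear change of generators already set up above. With $B$ the symmetric square root of $A(\overline{x})$ and $W_i = b^{ij}(\overline{x})\,Y_j$, we have $L_{\overline{x}}=\sum_i W_i^2$. The $W_i$ are left-invariant, $1$-homogeneous, linearly independent and span $\mathfrak{a}_1$, so they are again Lie generators of $\mathbb{G}$. Folland's theorem (see \cite{Fo2} and \cite{librone}) then gives, for the sublaplacian $\sum W_i^2$ with $Q>2$, a fundamental solution $\Gamma_{\overline{x}}$ with the following properties: it is smooth away from $0$ (by hypoellipticity of $L_{\overline{x}}$ on $\mathbb{R}^N\setminus\{0\}$, Hörmander's theorem), homogeneous of degree $2-Q$ with respect to $D_\lambda$, and satisfies $L_{\overline{x}}\Gamma_{\overline{x}}=0$ outside the origin together with the representation formula (\ref{rep formula u: L Gamma = delta}). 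The convolution $\Gamma_{\overline{x}}(y^{-1}*x)$ appears because $L_{\overline{x}}$ is left-invariant, so $L_{\overline{x}}(\Gamma_{\overline{x}}*f)=f$ for $f\in C_0^\infty$. Since $Y_j$ is a linear combination of the $W_i$ (via $B^{-1}$), the derivatives $Y_I\Gamma_{\overline{x}}$ exist and are continuous away from $0$. All of this part is quoted.

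The genuinely new content is the uniformity of $\Theta_{\Gamma_{\overline{x}},p}$ in $\overline{x}$, i.e. dependence only on $\Lambda$ (and $\mathbb{G},p$). I would argue by compactness. The set $\mathcal{M}_\Lambda$ is compact, and for $A\in\mathcal{M}_\Lambda$ let $\Gamma_A$ denote the fundamental solution of $L_A=a^{ij}Y_iY_j$. It suffices to show that $(A,w)\mapsto Y_I\Gamma_A(w)$ is continuous on $\mathcal{M}_\Lambda\times\{\|w\|=1\}$. A sup over a compact set of a continuous function is then finite.

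For the continuity I would use the heat-kernel representation $\Gamma_A(w)=\int_0^\infty h_A(t,w)\,dt$, where $h_A$ is the heat kernel of $L_A-\partial_t$. Two estimates are needed:
\begin{enumerate}
\item Gaussian bounds $|Y_I h_A(t,w)|\le c\,t^{-(Q+|I|)/2}e^{-\|w\|^2/(ct)}$, with $c$ uniform over $A\in\mathcal{M}_\Lambda$. These are available in the uniform form in \cite{librone} and \cite{UGE}: the constants depend only on $\Lambda$ and $\mathbb{G}$.
\item Continuous dependence of $h_A$ on $A$, for instance via Duhamel's formula $h_A-h_{A'}=\int_0^t h_A*(L_A-L_{A'})h_{A'}$. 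The error is $O(|A-A'|)$, again controlled by the uniform Gaussian bounds.
\end{enumerate}
Integrating in $t$ near $\|w\|=1$ gives uniform convergence and hence continuity of $Y_I\Gamma_A(w)$. Alternatively, one can cite directly the uniform estimates for fundamental solutions of families of sublaplacians in \cite[Ch.~on uniform estimates]{librone}.

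The main obstacle is this uniformity step. Everything else is a direct citation of Folland's theory after the change of generators. If the compactness/continuity argument proves delicate, the fallback is a different reduction. Write $\Gamma_A=\Gamma_0\circ T_A$, where $T_A$ is the Lie group automorphism induced by the linear map $B$ on $\mathfrak{a}_1$ and extended to the stratification; here $\Gamma_0$ is the fundamental solution of $\sum Y_i^2$. This identity holds up to the Jacobian factor $|\det T_A|^{-1}$, which is a polynomial in the entries of $B$. Then $Y_I\Gamma_A$ is expressed through derivatives of the single kernel $\Gamma_0$, with coefficients polynomial in $B,B^{-1}$, which are bounded in terms of $\Lambda$. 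The sphere $\{\|w\|=1\}$ is mapped by $T_A$ into an annulus whose radii depend only on $\Lambda$, and that annulus is compact and avoids the origin, where $\Gamma_0$ is smooth.
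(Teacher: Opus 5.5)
The paper proves nothing here itself; it cites \cite[Theorem 6.18]{librone}, which contains existence, smoothness, homogeneity and the $\Lambda$-uniform bounds. Your reduction to the sublaplacian $\sum_i W_i^2$ and your appeal to Folland for the qualitative part is what lies behind that citation. So the only place where you take your own route is the uniformity of $\Theta_{\Gamma_{\overline{x}},p}$. Your heat-kernel argument there is sound, but longer than needed.

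Once you grant the uniform bounds $|Y_I h_A(t,w)|\le c\,t^{-(Q+|I|)/2}e^{-\|w\|^2/(ct)}$ with $c=c(\mathbb{G},\Lambda,|I|)$ from \cite{UGE}, you can differentiate under the integral in $\Gamma_A=\int_0^\infty h_A(t,\cdot)\,dt$. This is legitimate because the bound is $t$-integrable locally uniformly away from the origin. You then get directly
\[
\sup_{\|w\|=1}|Y_I\Gamma_A(w)|\le c\int_0^\infty t^{-(Q+|I|)/2}e^{-1/(ct)}\,dt=c(\mathbb{G},\Lambda,|I|)<\infty,
\]
which is finite because $Q>2$. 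So the compactness of $\mathcal{M}_\Lambda$ and the Duhamel continuity step can simply be dropped. As you sketched it, the Duhamel step is also not immediate. The quantity $\|(L_A-L_{A'})h_{A'}(s,\cdot)\|_{L^1}$ behaves like $s^{-1}$, so near $s=0$ you would have to use the cancellation of second derivatives and move them onto the other factor.

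Your fallback, by contrast, fails on a general Carnot group. A linear map of $\mathfrak{a}_1$ extends to a graded automorphism of $\mathfrak{a}$ only if it respects the bracket relations. This is automatic for free groups but not in general.

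Take $\mathbb{H}^2$, whose only nonzero brackets are $[Y_1,Y_3]=[Y_2,Y_4]=Y_5$. A map $C$ of $\mathfrak{a}_1$ extends if and only if $C^TJC=\mu J$, with $J$ the symplectic matrix and some $\mu\neq 0$. For $A=\mathrm{diag}(4,1,1,1)$ and $B=\mathrm{diag}(2,1,1,1)$, the bracket $[BY_1,BY_3]$ forces $\mu=2$, while $[BY_2,BY_4]$ forces $\mu=1$. No other factorization $A=C^TC$ helps either: $A/\mu$ would then be a positive symplectic matrix, whose eigenvalues pair off as $\lambda,\lambda^{-1}$. So $\Gamma_A$ is not the pullback of $\Gamma_0$ under an automorphism.

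The group produced by the lifting of Theorem \ref{global BB lifting thm} is in general not free; for instance, $\partial_x, x^2\partial_y$ lift to the Engel group. Theorem \ref{1.4: Schauder est Carnot thm} is also stated for arbitrary Carnot groups. Hence the uniformity has to come from the Gaussian-bound argument, or from the citation, and not from this reduction.
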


\vspace{2mm}

\hspace{-7mm} For the proof see \cite[Theorem 6.18]{librone}. Actually, we remark that the representation formula (\ref{rep formula u: L Gamma = delta}) holds true more in general for any $u \in C^{2,\alpha}_{0,Y}(\mathbb{R}^N)$, as can be shown via an approximation argument in Carnot groups (see \cite[Proposition 8.49]{librone}).

\vspace{2mm}

\hspace{-7mm} We point out that the reason why $L_{\overline{x}}$ plays an important role in our procedure is that, in view of the Hölder-continuity of the coefficients $a_{ij}$, the operator $L_{\overline{x}}$ is a \say{good approximation} (at least locally, near $\overline{x}$) of our original operator $L$, in the sense that the approximation error $L_{\overline{x}}-L$ is \say{small}, in a suitable sense. What we mean by that is made precise by the statement of the following lemma, which gives a quantitative control on the action of $L_{\overline{x}}-L$:

\vspace{2mm}

\begin{lemma}
\label{C^alpha estimate for L_x-L}
    For every $R>0$, $\overline{x} \in \mathbb{R}^N$ and $u \in C^{2,\alpha}_{0,Y}(B_R(\overline{x}))$ it holds
    \begin{align*}
        & |(L_{\overline{x}}-L)u|_{C^{\alpha}_Y(B_R(\overline{x}))} \leq c(
        \alpha, ||a||_0) \, R^{\alpha} |D^2 u|_{C^{\alpha}_Y(B_R(\overline{x}))}.
    \end{align*}
    Further requiring $a_{ij} \in C^{k,\alpha}_Y(B_R(\overline{x}))$, $k \geq 1$ for any $1 \leq i, \, j \leq m$, then for every $u \in C^{k+2,\alpha}_{0,Y}(B_R(\overline{x}))$ and $|I|=k$ we have
\begin{align*}
    & |Y_I (L_{\overline{x}} - L) u|_{C^{\alpha}_Y(B_R(\overline{x}))} \leq c(\alpha, ||a||_k) \left\{ R^{\alpha} |D^{k+2} u|_{C^{\alpha}_Y(B_R(\overline{x}))} + ||u||_{C^{k+1}_Y(B_R(\overline{x}))} \right\}.
\end{align*}
\end{lemma}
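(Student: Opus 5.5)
We write $(L_{\overline{x}}-L)u = \sum_{i,j} (a_{ij}(\overline{x}) - a_{ij}(x))\, Y_iY_j u =: \sum_{i,j} g_{ij}\, f_{ij}$, where $g_{ij}(x)=a_{ij}(\overline{x})-a_{ij}(x)$ and $f_{ij}=Y_iY_ju$. The plan is to bound the Hölder seminorm of each product $g_{ij}f_{ij}$ on $B_R(\overline{x})$ through the elementary splitting
\begin{equation*}
|g f(x)-gf(x')| \leq |g(x)|\,|f(x)-f(x')| + |f(x')|\,|g(x)-g(x')|,
\end{equation*}
which gives $|gf|_{C^\alpha_Y} \leq \|g\|_{L^\infty}|f|_{C^\alpha_Y} + \|f\|_{L^\infty}|g|_{C^\alpha_Y}$ (all on $B_R(\overline{x})$).

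\textbf{Zero-order estimate.} We bound the four quantities in turn.
\begin{itemize}
\item Since $g_{ij}(\overline{x})=0$ and $d_Y(x,\overline{x})\leq c\, d(x,\overline{x})\leq cR$ by equivalence of homogeneous norms, we get $\|g_{ij}\|_{L^\infty(B_R(\overline{x}))}\leq c\|a\|_0R^\alpha$.
\item Clearly $|g_{ij}|_{C^\alpha_Y}\leq \|a\|_0$.
\item Since $u$ is compactly supported in $B_R(\overline{x})$, $f_{ij}$ vanishes at some point of the ball (near the boundary), so by (\ref{Hölder norm vs seminorm for u that vanishes at a point}) $\|f_{ij}\|_{L^\infty(B_R(\overline{x}))}\leq cR^\alpha |D^2u|_{C^\alpha_Y(B_R(\overline{x}))}$.
\end{itemize}
Combining these yields $|(L_{\overline{x}}-L)u|_{C^\alpha_Y}\leq c(\|a\|_0)\,R^\alpha |D^2u|_{C^\alpha_Y}$ (in fact with an extra harmless $(1+R^\alpha)$ factor in the second term). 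To avoid a dependence on $R$, we should use the second term more carefully. The cleaner route is to bound $\|f_{ij}\|_{L^\infty}\,|g_{ij}|_{C^\alpha}$, but the estimate as stated holds exactly with $R^\alpha$, since both terms carry a factor $R^\alpha$ times $|D^2u|_{C^\alpha}$. The constant depends only on $\alpha$, $\|a\|_0$ and the equivalence constant between $d$ and $d_Y$.

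\textbf{Higher-order estimate.} We apply $Y_I$ with $|I|=k$ using the Leibniz rule (vector fields are derivations):
\begin{equation*}
Y_I(g_{ij}f_{ij})=g_{ij}\,Y_IY_iY_ju+\sum_{J\cup K=I,\ |J|\geq1} c_{J,K}\,(Y_Jg_{ij})\,(Y_KY_iY_ju),
\end{equation*}
where $Y_Jg_{ij}=-Y_Ja_{ij}$ and $|K|\leq k-1$. The first term is handled exactly as in the zero-order case, giving $cR^\alpha|D^{k+2}u|_{C^\alpha_Y}$. For the remaining terms we use the product estimate (\ref{Hölder norm of a product}): $\|Y_Ja_{ij}\|_{C^\alpha_Y}\leq\|a\|_k$, while $Y_KY_iY_ju$ is a derivative of order between $2$ and $k+1$, so its $C^\alpha_Y$ norm is controlled by $\|u\|_{C^{k+1,\alpha}_Y}$. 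Here we interpret the $\|u\|_{C^{k+1}_Y}$ appearing in the statement as this Hölder-type norm of derivatives up to order $k+1$.

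\textbf{Main obstacle.} The delicate point is the lower-order terms in the higher-order estimate. The term with $|K|=k-1$ involves $Y_K Y_i Y_j u$ of order $k+1$, whose $C^\alpha_Y$ seminorm is not obviously bounded by a sup norm of derivatives up to order $k+1$. If the statement really intends $\|u\|_{C^{k+1}_Y}$ without the Hölder part, I would bound the seminorm of such a derivative $h$ directly:
\begin{itemize}
\item for $d(x,x')$ small, Lemma \ref{UGE Lemma 7.6} gives $|h(x)-h(x')|\leq c\,d^\alpha\,d^{1-\alpha}\sup|Yh|$, and $Yh$ has order $k+2$;
\item alternatively, I would absorb these terms into the $R^\alpha|D^{k+2}u|$ term, using that $h$ vanishes somewhere in the ball.
\end{itemize}
I expect to settle this by reading $\|u\|_{C^{k+1}_Y}$ as $\|u\|_{C^{k+1,\alpha}_Y}$, which is all that the subsequent interpolation argument requires.
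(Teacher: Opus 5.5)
Your proposal is correct and follows essentially the paper's own argument. It uses the same splitting $g(x)[f(x)-f(x')]+f(x')[g(x)-g(x')]$, where $g_{ij}(\overline{x})=0$ and the vanishing of the compactly supported derivatives $Y_J Y_i Y_j u$ at some point of $B_R(\overline{x})$ each supply the factor $R^\alpha$. It then applies the iterated Leibniz rule for $Y_I$, with the terms where a derivative falls on $a_{ij}$ controlled by $\|a\|_k$ and $\|u\|_{C^{k+1,\alpha}_Y(B_R(\overline{x}))}$. Your reading of $\|u\|_{C^{k+1}_Y}$ as $\|u\|_{C^{k+1,\alpha}_Y}$ is exactly what the paper's proof concludes and what Theorem \ref{local est high order der thm} uses, so the ``main obstacle'' you raise is only a notational slip in the statement.
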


\vspace{2mm}

\begin{proof}
    Recalling the definitions (\ref{our operator L_Y Carnot}),(\ref{our freezed operator Carnot}), for any $x,y \in B_R(\overline{x})$ one finds    \begin{align*}
        & (L_{\overline{x}}-L)u(x)-(L_{\overline{x}}-L) u(y) = [a^{hl}(\overline{x})-a^{hl}(x)] \times \\
        & \times [Y_h Y_l u(x)- Y_h Y_l u(y)]+ [a^{hl}(y)-a^{hl}(x)] \, Y_h Y_l u(x') \quad \forall x, \, y \in B_R(\overline{x}).
    \end{align*}
    By Hölder continuity of $\{a_{hl}\}_{h,l}$ and equivalence between $d_Y$ and $d$
    \begin{equation*}
        |a^{hl}(x_1)-a^{hl}(x_2)| \leq c(||a||_0) \, d(x_1,x_2)^{\alpha} \hspace{5mm} \forall x_1,\, x_2 \in B_R(\overline{x}), \, 1 \leq h,l \leq m
    \end{equation*}
    Moreover, since $Y_h Y_l u \in C^{\alpha}_{0,Y}(B_R(\overline{x}))$, by (\ref{Hölder norm vs seminorm for u that vanishes at a point}) we have
    \begin{align}
    & \label{Hölder estimate Y_h Y_l u}
    |Y_h Y_l u(y)| \leq ||Y_h Y_l u||_{L^{\infty}(B_R(\overline{x}))} \leq c(\alpha) R^{\alpha} |Y_h Y_l u|_{C^{\alpha}_Y(B_R(\overline{x}))} 
\end{align}
    hence
    \begin{align*}
        & |(L_{\overline{x}}-L)u(x)-(L_{\overline{x}}-L) u(y)| \leq c(\alpha, ||a||_0) d(x,y)^{\alpha} R^{\alpha} |D^2 u|_{C^{\alpha}_Y(B_R(\overline{x}))}
    \end{align*}
    and the first claim follows just by definition of the Hölder seminorm $|\cdot|_{C^{\alpha}_Y}$.

    Let us turn to the second part.
    Fixed an integer $k \geq 1$ and a multindex $I$ with $|I|=k$, by iterated Leibniz rule we get
    \begin{align*}
        &Y_I(L_{\overline{x}}-L)u(x) = \sum_{J \subseteq I} \, Y_{I \setminus J} \left(a^{hl}(\overline{x}) - a^{hl}(x)\right) \, Y_J Y_h Y_l u(x),
    \end{align*}
    where both $J$ and $I \setminus J$ are meant as ordered submultindex of $I$, and also the case $J = \emptyset$ is considered. Then
    \begin{align}
        & Y_I(L_{\overline{x}}-L)u(x) - Y_I(L_{\overline{x}}-L)u(y) = \sum_{J \subseteq I} \{ Y_{I \setminus J} (a^{hl}(\overline{x})-a^{hl}(x)) \times \nonumber \\
        & \times \left[Y_J Y_h Y_l u(x) - Y_J Y_h Y_l u(y) \right] + (Y_{I \setminus J} a^{hl}(y)-Y_{I \setminus J} a^{hl}(x)) \, Y_J Y_h Y_l u(y) \}\nonumber\\
        & \label{rep formula Y_I (L_x-L)} = \sum_{J \subseteq I} \{ A_J^{(1)}(x,y)+A_J^{(2)}(x,y)\}.
    \end{align}
    To shorten the notation, let us denote $B_R(\overline{x})$ just by $B_R$. As for $A_J^{(1)}$:
    \begin{align*}
        & |A_{I}^{(1)}(x,y)| \leq d(x,y)^{\alpha} |a^{hl}|_{C^{\alpha}_Y(B_R)} d(x,\overline{x})^{\alpha} |Y_I Y_h Y_l u|_{C^{\alpha}_Y(B_R)} \\
        & \quad \leq c(||a||_0) \, R^{\alpha} \, d(x,y)^{\alpha} |D^{k+2} u|_{C^{\alpha}_Y(B_R)};\\
        & |A_J^{(1)}(x,y)| \leq ||Y_{I \setminus J} a^{hl}||_{L^{\infty}(B_R)} \, d(x,y)^{\alpha} |Y_J Y_h Y_l u|_{C^{\alpha}_Y(B_R)} \\
        & \quad \leq c(||a||_k) \, d(x,y)^{\alpha} ||u||_{C^{k+1,\alpha}_Y(B_R)} \quad \forall J \subsetneq I.
    \end{align*}
    Similarly, with also the aid of (\ref{Hölder norm vs seminorm for u that vanishes at a point}), for any $J \subseteq I$ we obtain
    \begin{align*}
        & |A_{J}^{(2)}(x,y)| \leq |Y_{I \setminus J} a^{hl}|_{C^{\alpha}_Y(B_R)} \, d(x,y)^{\alpha} ||Y_{J} Y_h Y_l u||_{L^{\infty}(B_R)} \leq |Y_{I \setminus J} a^{hl}|_{C^{\alpha}_Y(B_R)} \times \\
        & \times d(x,y)^{\alpha} |Y_{J} Y_h Y_l u|_{C^{\alpha}(B_R)} \leq c(\alpha, ||a||_k) \, R^{\alpha} \, d(x,y)^{\alpha} |D^{k+2} u|_{C^{\alpha}_Y(B_R)}.
    \end{align*}
    Plugging the above estimates in (\ref{rep formula Y_I (L_x-L)}) one gets
    \begin{align*}
        & |Y_I (L_{\overline{x}}-L) u|_{C^{\alpha}_Y(B_R)} \leq c(\alpha, ||a||_k) \left\{ R^{\alpha} |D^{k+2} u|_{C^{\alpha}_Y(B_R)} + ||u||_{C^{k+1,\alpha}_Y(B_R)}\right\}.
    \end{align*}
\end{proof}

\vspace{2mm}

\hspace{-7mm} We will also exploit the following abstract result, that holds for any $(2-Q)$-homogeneous function that is smooth outside the origin. Note that, by Theorem \ref{freezed fund sol}, the fundamental solution $\Gamma_{\overline{x}}$ of the operator $L_{\overline{x}}$ has these properties, hence everything in the following theorem holds true in particular for $\Gamma_{\overline{x}}$.

\vspace{2mm}

\begin{theorem}
\label{librone 6.32-6.33-8.24}
    Let $f\in C^{\infty}(\mathbb{R}^N \setminus \{0\})$ be homogeneous of degree $2-Q$, and 
    \begin{equation}
        \label{uniform estimates Y_I Gamma} \Theta_{f,k}:=\sup_{|I| \leq k} \, \sup_{||w||=1} |Y_I f(w)|.
    \end{equation}
    for any $k \geq 0$. Then, for any $1 \leq h, \, l \leq m$, there holds
        \begin{equation}
        \label{vanishing prop Y_h Y_l f}
            \int_{r<||y||<R} Y_h Y_l f(y)\, dy = 0 \quad \forall R>r>0,
        \end{equation}
        and for all $x \in \mathbb{R}^N$ and $\phi \in C^{2,\alpha}_{0,Y}(\mathbb{R}^N)$ we have
        \begin{equation}
        \label{rep formula first derivatives (DCT)}
            Y_l \int_{\mathbb{R}^N} f(y^{-1}*x) \, \phi(y) \, dy = \int_{\mathbb{R}^N} Y_l f(y^{-1}*x) \, \phi(y) \, dy;
        \end{equation}
        \begin{align}
        & \nonumber
            Y_h \int_{\mathbb{R}^N} Y_l f(y^{-1}*x) \, \phi(y) \, dy = P.V. \int_{\mathbb{R}^N} Y_h Y_l f(y^{-1}*x) \, \phi(y) \, dy + c_{hl} \phi(x)\\
        & \label{P.V. formula second derivatives}:= \lim_{\epsilon \to 0^+} \int_{d(x,y)>\epsilon} Y_h Y_l f(y^{-1}*x) \, \phi(y) \, dy + c_{hl} \phi(x)
        \end{align}
        with
        \begin{equation}
            \label{explicit expression c_hl} c_{hl}=\int_{||w||=1} Y_l f(w) \left( Y_h(w) \cdot \nu(w) \right) \, d\sigma(w),
        \end{equation}
        being $\nu$ the outer normal vector to the surface $\{||w||=1\}$. Moreover, for any $R_0>0$, there exist constants $c=(\mathbb{G}, \alpha, \Theta_{f,3},R_0)>0$ and $M=M(\mathbb{G})>1$ s.t.
    \begin{equation}
    \label{elliptic estimate Y_h Y_l f}
            |Y_h Y_l f(x)| \leq c \, ||x||^{-Q} \quad \forall ||x|| \leq R_0,
        \end{equation}
        and for every $x, \, y \in \mathbb{R}^N$ satisfying $||y*x|| \, \leq R_0$ and $||x|| \geq M||y||$ we have
        \begin{equation}
        \label{mean value ineq 6.25}
            |Y_h Y_l f(y * x)-Y_h Y_l f(x)| \leq c \, ||y|| \cdot ||x||^{-Q-1}.
        \end{equation}
\end{theorem}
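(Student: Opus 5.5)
Each claim of Theorem \ref{librone 6.32-6.33-8.24} follows from homogeneity of $f$ together with left-invariance and homogeneity of the $Y_i$. The model results are \cite[Theorems 6.32, 6.33, 8.24]{librone}, which we essentially reproduce.

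\textbf{Vanishing property (\ref{vanishing prop Y_h Y_l f}).} Since $Y_l$ is left-invariant, it is divergence free: its coefficients $b_{kl}$ depend only on variables of weight strictly smaller than $\sigma_k$. Hence $Y_l f$ is homogeneous of degree $1-Q$, and by the divergence theorem
\[
\int_{r<||y||<R} Y_h(Y_l f)\,dy = g(R)-g(r), \qquad g(\rho)=\int_{||w||=\rho} Y_l f\,(Y_h\cdot\nu)\,d\sigma .
\]
To show $g$ is constant, change variables $w=D_\rho(w')$. The function $Y_lf$ scales like $\rho^{1-Q}$, and $Y_h\cdot\nu\,d\sigma$ scales like $\rho^{Q-1}$, since it is the flux form $\iota_{Y_h}dy$ of a $1$-homogeneous field. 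Thus $g(\rho)=g(1)$ for every $\rho$, and (\ref{vanishing prop Y_h Y_l f}) follows. The same computation identifies $c_{hl}=g(1)$ in (\ref{explicit expression c_hl}).

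\textbf{Derivative formulas.} For (\ref{rep formula first derivatives (DCT)}), write the integral as $\int f(y)\,\phi(x*y^{-1})\,dy$ (convolution form) and differentiate under the integral. This is justified because $f$ is locally integrable ($2-Q>-Q$), and $Y_l$, acting on $x$, falls on $\phi$. Then move $Y_l$ back onto $f$ via (\ref{IPP left-right invariant vf}) and the identity $Y_l[\phi(x*y^{-1})]=(Y_l\phi)(x*y^{-1})$; equivalently, one may cut off $f$ near $0$ and pass to the limit. For (\ref{P.V. formula second derivatives}), note that $Y_lf\sim||y||^{1-Q}$ is still integrable. Split the integral into the region $d(x,y)>\epsilon$ and the region $d(x,y)<\epsilon$. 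On the outer region differentiate directly. On the inner region use the same integration-by-parts device: put $Y_h$ on $\phi$, then integrate by parts on the ball $\{||y^{-1}*x||<\epsilon\}$. This produces a boundary term, which converges to $c_{hl}\phi(x)$ as $\epsilon\to0$ by continuity of $\phi$ and the scaling of $g$ above. The inner volume integral tends to $0$ because it is $O(\epsilon)$.

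\textbf{Pointwise bounds.} Since $Y_hY_lf$ is $(-Q)$-homogeneous and smooth away from the origin, (\ref{elliptic estimate Y_h Y_l f}) holds with constant $\sup_{||w||=1}|Y_hY_lf|\le\Theta_{f,2}$. For (\ref{mean value ineq 6.25}), reduce by dilation $D_{1/||x||}$ to the case $||x||=1$, $||y||\le 1/M$. Apply Lemma \ref{UGE Lemma 7.6} (or the Euclidean mean value theorem combined with right-invariant fields) to $F=Y_hY_lf$ between $y*x$ and $x$. For $M$ large, all intermediate points lie in an annulus $\{1/2\le||z||\le2\}$, so the increment is bounded by $c\,||y||\sup_{\text{annulus}}|Y_iY_hY_lf|\le c\,\Theta_{f,3}\,||y||$. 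Rescaling gives the factor $||x||^{-Q-1}$.

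\textbf{Main obstacle.} The delicate point is the increment in the mean value step. Since $d(y*x,x)=||x^{-1}*y*x||$, the left translation by $y$ is not controlled by $||y||$ alone; one should instead use the right-invariant fields $Y^R_i$, for which $t\mapsto \exp(tY^R)\,x$ moves the point by left multiplication. Homogeneity of $Y^R_i$ and the comparison of the $Y^R$-derivatives with $Y$-derivatives on the compact annulus (where both sets span and coefficients are bounded) then give the bound in terms of $\Theta_{f,3}$. The assumption $||y*x||\le R_0$ is only used to make the constants uniform.
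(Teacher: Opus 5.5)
\textbf{Correct parts.} Your arguments for (\ref{vanishing prop Y_h Y_l f}) and for (\ref{elliptic estimate Y_h Y_l f}) are correct. For the vanishing property you use that $Y_h$ is divergence-free, the divergence theorem, and the scaling of the flux form $\iota_{Y_h}dy$. These are the standard proofs behind the results the paper simply cites.

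\textbf{The gap: a false identity in the derivative formulas.} In $\phi(x*y^{-1})$ the point $x$ is moved by a \emph{right} translation, and left-invariant fields do not commute with right translations. Applying $Y_l$ in $x$ gives $\frac{d}{dt}\phi(x*\exp(tY_l)*y^{-1})\big|_{t=0}$. This is $\phi$ differentiated at $x*y^{-1}$ along $\mathrm{Ad}_y Y_l$, a field with components in all layers and coefficients polynomial in $y$. It is not $(Y_l\phi)(x*y^{-1})$; your identity holds for $Y_l^R$, not for $Y_l$.

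Moreover, (\ref{IPP left-right invariant vf}) trades a left-invariant derivative on $\phi$ for a \emph{right}-invariant one on $f$. So the chain you describe would end with $Y_l^R f$ instead of $Y_l f$. You have transplanted the Euclidean device of moving the derivative onto the density, and that is exactly what fails when $\mathbb{G}$ is non-abelian.

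For (\ref{rep formula first derivatives (DCT)}) your fallback (cut $f$ off near $0$ and pass to the limit) is fine, and it is what the paper means by dominated convergence. For (\ref{P.V. formula second derivatives}), however, you offer only the broken device (``put $Y_h$ on $\phi$, integrate by parts on the small ball''). Three further problems there:
\begin{itemize}
\item ``Differentiate directly'' on the outer region ignores that $\{d(x,y)>\epsilon\}$ moves with $x$.
\item That moving boundary is exactly what produces the term $c_{hl}\phi(x)$.
\item You never use (\ref{vanishing prop Y_h Y_l f}), which is what makes the principal value exist.
\end{itemize}

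\textbf{How to repair it.} Keep every derivative on the kernel, where left-invariance gives $Y_h^x[K(y^{-1}*x)]=(Y_hK)(y^{-1}*x)$. Take a radial cutoff $\eta_\epsilon(z)=\eta(\|z\|/\epsilon)$ with $\eta=0$ on $[0,1]$ and $\eta=1$ on $[2,\infty)$. Put $u_\epsilon(x)=\int(\eta_\epsilon Y_lf)(y^{-1}*x)\,\phi(y)\,dy$. Then $Y_hu_\epsilon(x)$ splits into two terms.
\begin{itemize}
\item The first term is $\int(\eta_\epsilon\, Y_hY_lf)(y^{-1}*x)\,\phi(y)\,dy$. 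On a large ball you may replace $\phi(y)$ by $\phi(y)-\phi(x)$, since radial weights integrate $Y_hY_lf$ to zero by (\ref{vanishing prop Y_h Y_l f}). It then converges uniformly by (\ref{elliptic estimate Y_h Y_l f}) and the H\"older continuity of $\phi$.
\item The second term is $\int(Y_h\eta_\epsilon\cdot Y_lf)(y^{-1}*x)\,\phi(y)\,dy$. Its kernel lives in $\{\epsilon\le\|z\|\le2\epsilon\}$. By the coarea formula and your scaling $g(\rho)=g(1)$, its integral is exactly $c_{hl}$, so this term tends to $c_{hl}\phi(x)$.
\end{itemize}
Since $u_\epsilon$ also converges uniformly, you can pass to the limit along integral curves of $Y_h$. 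The resulting absolutely convergent integral equals the sharp principal value, again by (\ref{vanishing prop Y_h Y_l f}).

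\textbf{A smaller point on (\ref{mean value ineq 6.25}).} Switching to right-invariant fields is the right idea. But $Y_i^R=\sum_{j=1}^N q_{ij}Y_j$ brings in the higher-layer fields $Y_j$, which are combinations of $\sigma_j$-fold commutators; the $Y_i^R$ with $i\le m$ do not span by themselves. So your argument yields a constant depending on $\Theta_{f,s+2}$, with $s$ the step, not on $\Theta_{f,3}$. Letting $y=\exp(tY_i)$ with $t\to0$ shows that $c$ must in any case control $\sup_{\|x\|=1}|Y_i^RY_hY_lf|$. This is harmless for the paper, where $\Theta_{\Gamma_{\overline{x}},p}$ is bounded for every $p$.
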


\vspace{2mm}

\hspace{-7mm} For the proof of (\ref{vanishing prop Y_h Y_l f}) see \cite[Theorem 6.31]{librone}; (\ref{rep formula first derivatives (DCT)}) is straightforward from (\ref{rep formula u: L Gamma = delta}) by dominated convergence, since $y \mapsto Y_jf(y^{-1}*x) \phi(y) \in L^1_{loc}(\mathbb{R}^N)$. With (\ref{rep formula first derivatives (DCT)}) at hand, see \cite[Theorem 6.33]{librone} for (\ref{P.V. formula second derivatives})-(\ref{explicit expression c_hl}) under the stronger assumption that $\phi \in C^{\infty}_{0}(\mathbb{R}^N)$), and \cite[Proposition 8.49]{librone} for the extension to $\phi \in C^{2,\alpha}_{0,Y}(\mathbb{R}^N)$; see \cite[Lemma 8.24]{librone} for (\ref{elliptic estimate Y_h Y_l f}) and (\ref{mean value ineq 6.25}).

\vspace{2mm}

We conclude this section with some useful comments on the above theorem. First, the vanishing property (\ref{vanishing prop Y_h Y_l f}) actually makes the integral of $Y_i Y_j f$ (with $f$ as in Theorem \ref{librone 6.32-6.33-8.24}) vanish not only on spherical shells, but on several type of radially symmetric domains, indeed:
    \begin{equation}
        P.V. \int_{||x||<r} Y_i Y_j f(x) \, dx = \lim_{\epsilon \to 0^+} \int_{\epsilon < ||x|| < r} Y_i Y_j f(x) \, dx = 0 \hspace{5mm} \forall r>0.
    \end{equation}
    Moreover, from dominated convergence (based on (\ref{elliptic estimate Y_h Y_l f})), there holds
    \begin{align*}
        & \int_{||\eta||=1} (Y_i Y_j f)(\eta) \, dS(\eta) = \lim_{\epsilon \to 0^+} \int_{1-\epsilon < ||x|| < 1+\epsilon} (Y_i Y_j f)(x) \, dx = 0
    \end{align*}
    and consequently, for any radial smooth function $\psi(x) = \Psi(||x||)$ with compact support, by $(-Q)$-homogeneity of $Y_i Y_j f$ we have
    \begin{align}
        & \nonumber \int_{||w||>\epsilon} (\psi Y_i Y_j f)(w) \, dw = \int_{||w||>\epsilon} \Psi(||w||) \, Y_i Y_j f(D_{||w||^{-1}}(w)) \, ||w||^{-Q} \, dw \\
        & \label{vanishing int with radial psi} = \int_{\epsilon}^{+\infty}\Psi(\rho) \, \rho^{-Q} \int_{||\eta||=1} Y_i Y_j f(\eta) \, dS(\eta) = 0 \quad \forall \epsilon > 0,
    \end{align}
    hence by taking the limit as $\epsilon \to 0^+$:
    \begin{align}
    \label{vanishing P.V. int on R^N}
        P.V.  \int_{\mathbb{R}^N} (\psi Y_i Y_j f)(w) \, dw = 0.
    \end{align}
    
\vspace{2mm}

Furthermore, in force of (\ref{explicit expression c_hl}) and (\ref{elliptic estimate Y_h Y_l f}), we have the following control on $c_{hl}$ (here below we set $Y_i=\sum_{k=1}^N b_{ik}(x) \partial_{x_k}$):
    \begin{align*}
        & |c_{hl}| \leq \Theta_{f,1} \int_{||w||=1}\sum_{i=1}^N |b_i(w)| \, dw \leq \Theta_{f,1} \, \omega_Q \sum_{k=1}^N ||b_i||_{L^{\infty}(\partial B_1(0))} = c(\mathbb{G}) \, \Theta_{f,1},
    \end{align*}
    where $\omega_Q$ denotes the measure of the unit ball $B_1(0)$.

\vspace{5mm}

\section{Global Schauder estimates in Carnot groups} \label{sec 3}
\subsection{Local estimates for second order derivatives}

The first step towards the proof of Theorem \ref{1.4: Schauder est Carnot thm} consists in showing a similar estimate in local form, namely just for functions $u \in C^{2,\alpha}_Y(\mathbb{R}^N)$ with \say{small} compact support. Specifically, we begin by proving the following:

\vspace{2mm}

\begin{theorem}[Local Schauder estimates in Carnot groups]
\label{local C^alpha estimate thm}
    Under the assumptions of Theorem \ref{1.4: Schauder est Carnot thm}, there exist positive constants $c_0$ and $R_0$, depending only on $\mathbb{G}, \, \Lambda, \, \alpha, \, ||a||_0$, such that for every $0<R \leq R_0$ and for every $u \in C^{2, \alpha}_Y(\mathbb{R}^N)$ with $supp(u) \subseteq B_R(\overline{x})$ the following inequality holds:
    \begin{equation}
    \label{||Y_h Y_l u|| <= C ||Lu||}
        ||D^2 u||_{C^{\alpha}_Y(B_R(\overline{x}))} \leq c_0 \, |Lu|_{C^{\alpha}_Y(B_R(\overline{x}))}.
    \end{equation}
\end{theorem}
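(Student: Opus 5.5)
The plan is the classical freezing argument. Fix $\overline{x}$, $R\le R_0$ (with $R_0$ to be chosen), and $u\in C^{2,\alpha}_Y(\mathbb{R}^N)$ supported in $B_R(\overline{x})$. We write $L_{\overline{x}}u = Lu + (L_{\overline{x}}-L)u$. By the representation formula (\ref{rep formula u: L Gamma = delta}), extended to $C^{2,\alpha}_{0,Y}$, together with (\ref{rep formula first derivatives (DCT)}) and (\ref{P.V. formula second derivatives}) applied with $f=\Gamma_{\overline{x}}$ and $\phi=L_{\overline{x}}u$, we obtain
\begin{equation*}
Y_hY_l u(x) = P.V.\int_{\mathbb{R}^N} Y_hY_l\Gamma_{\overline{x}}(y^{-1}*x)\,L_{\overline{x}}u(y)\,dy + c_{hl}\,L_{\overline{x}}u(x).
\end{equation*}
Here $|c_{hl}|\le c(\mathbb{G})\Theta_{\Gamma_{\overline{x}},1}\le c(\mathbb{G},\Lambda)$ by Theorem \ref{freezed fund sol}.

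The core step is a singular integral estimate for $T\phi:=P.V.\int K(y^{-1}*x)\phi(y)\,dy$ with $K=Y_hY_l\Gamma_{\overline{x}}$ and $\phi\in C^\alpha_Y$ supported in $B_R(\overline{x})$. We aim to show, for $x,x'\in B_R(\overline{x})$,
\begin{equation*}
|T\phi(x)-T\phi(x')|\le c\,d(x,x')^\alpha|\phi|_{C^\alpha_Y(B_R(\overline{x}))},
\end{equation*}
with $c$ depending only on $\mathbb{G},\alpha,\Lambda,R_0$, and also $\|T\phi\|_{L^\infty(B_R)}\le c|\phi|_{C^\alpha_Y}$. This is the standard Calderón--Zygmund/Hölder argument of \cite[Ch.~8]{librone}.

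The first ingredient is the cancellation property. By (\ref{vanishing prop Y_h Y_l f}), $T\phi(x)=\int_{B_r(x)}K(y^{-1}*x)[\phi(y)-\phi(x)]\,dy$ for a ball $B_r(x)$ containing the support, up to boundary terms. Using (\ref{elliptic estimate Y_h Y_l f}), this is bounded by $c\int_{d(x,y)<cR}d(x,y)^{\alpha-Q}|\phi|_\alpha \le cR^\alpha|\phi|_\alpha$.

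For the Hölder difference, set $\delta=d(x,x')$ and split the integrals into the near region $d(x,y)<M\kappa\delta$ and the far region. On the near region, cancellation and the $-Q$ bound give $c\delta^\alpha|\phi|_\alpha$. On the far region, write $\phi(y)-\phi(x)$. The difference of kernels is controlled by the mean value inequality (\ref{mean value ineq 6.25}), which gives a contribution $\int_{d>M\delta}\delta\, d^{\alpha-Q-1}\lesssim\delta^\alpha$. The leftover term $(\phi(x)-\phi(x'))\int_{far}K(y^{-1}*x')\,dy$ is bounded using the vanishing property on annular-type domains. The domains are not exactly spherical around $x'$, so we pass to a ball centered at $x'$ and bound the symmetric-difference region, of measure comparable to $\delta$-scale shells, by the $-Q$ decay. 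I expect this bookkeeping with the quasidistance (quasi-triangle constants, non-concentric regions) to be the main technical obstacle. Since $|\phi(x)-\phi(x')|\le\delta^\alpha|\phi|_\alpha$, this yields the claim. Note that $R\le R_0$ is needed so that (\ref{elliptic estimate Y_h Y_l f}) and (\ref{mean value ineq 6.25}) apply.

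Finally, we apply this to $\phi=L_{\overline{x}}u$, which is supported in $B_R(\overline{x})$. Combining it with (\ref{Hölder norm vs seminorm for u that vanishes at a point}) for the $L^\infty$ part gives
\begin{equation*}
\|D^2u\|_{C^\alpha_Y(B_R)}\le c_1|L_{\overline{x}}u|_{C^\alpha_Y(B_R)}\le c_1|Lu|_{C^\alpha_Y(B_R)}+c_1c_2R^\alpha|D^2u|_{C^\alpha_Y(B_R)},
\end{equation*}
where the last step uses Lemma \ref{C^alpha estimate for L_x-L}. Choosing $R_0$ so that $c_1c_2R_0^\alpha\le 1/2$ lets us absorb the last term, since $|D^2u|_{C^\alpha}<\infty$ for $u\in C^{2,\alpha}_Y$. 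This gives (\ref{||Y_h Y_l u|| <= C ||Lu||}) with $c_0=2c_1$. All constants are uniform in $\overline{x}$ because the bounds on $\Theta_{\Gamma_{\overline{x}},p}$ depend only on $\Lambda$.
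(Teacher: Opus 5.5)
Your proposal is correct and follows essentially the paper's route. You freeze the coefficients at $\overline{x}$, write $Y_hY_lu$ as the principal-value integral of $L_{\overline{x}}u$ against $Y_hY_l\Gamma_{\overline{x}}$ plus $c_{hl}L_{\overline{x}}u$, and bound this in $C^{\alpha}_Y(B_R(\overline{x}))$ by a near/far splitting using cancellation, the $(-Q)$-decay and the mean value inequality (the paper isolates this step as Theorem~\ref{C^alpha estimate for T_f phi, 3.2}, for any $(2-Q)$-homogeneous $f$); you then absorb the $R^{\alpha}|D^2u|_{C^{\alpha}_Y}$ term from Lemma~\ref{C^alpha estimate for L_x-L}, exactly as the paper does.

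The one difference is in the far region, with $\phi=L_{\overline{x}}u$. The paper multiplies the kernel by a radial cutoff $\theta$ and pairs the kernel difference with $\phi(y)-\phi(x')$. Its leftover term $(\phi(x')-\phi(x))\int_{d(x,y)\geq Md(x,x')}(\theta\, Y_hY_l\Gamma_{\overline{x}})(y^{-1}*x)\,dy$ then vanishes exactly by (\ref{vanishing int with radial psi}), since region and kernel are both centered at $x$. This sidesteps the non-concentric symmetric-difference bookkeeping you flagged, and the mismatch between outer domains attached to $x$ and $x'$. Your shell-based bound via the logarithmic estimate of Lemma~\ref{spherical integral lemma} would also close the argument.
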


\vspace{2mm}

\hspace{-7mm} For a reason that will become clear later, we prove the following general form of the above theorem:

\vspace{2mm}

\begin{theorem}
\label{C^alpha estimate for T_f phi, 3.2}
    Let $f \in C^{\infty}(\mathbb{R}^N \setminus \{0\})$ be a $(2-Q)$-homogeneous function, let $1 \leq h,\, l \leq m$ and $T_f$ be the operator defined as
    \begin{equation*}
        (T_f \phi)(x):=Y_h Y_l \int_{\mathbb{R}^N} f(y^{-1} *x) \, \phi(y) \, dy \hspace{5mm} \forall \phi \in C^{\alpha}_{0,Y}(\mathbb{R}^N), \, x \in \mathbb{R}^N.
    \end{equation*}
    Then there exists a constant $c_0=c_0(\mathbb{G}, \alpha, \Theta_{f,3})>0$ such that for every $0<R \leq 1$, $\overline{x} \in \mathbb{R}^N$ and $\phi \in C^{\alpha}_Y(\mathbb{R}^N)$ with $supp(\phi) \subset B_R(\overline{x})$ it holds
    \begin{equation*}
        ||T_f \phi||_{C^{\alpha}_Y(B_R(\overline{x}))} \leq c_0 \, |\phi|_{C^{\alpha}_Y(B_R(\overline{x}))}.
    \end{equation*}
\end{theorem}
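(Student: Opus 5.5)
Throughout, write $B_R$ for $B_R(\overline{x})$. The plan is the classical singular-integral argument, run on the homogeneous group. First, use (\ref{rep formula first derivatives (DCT)}) and (\ref{P.V. formula second derivatives}) to write
\[
T_f\phi(x)=\lim_{\epsilon\to0^+}\int_{d(x,y)>\epsilon} Y_hY_lf(y^{-1}*x)\,\phi(y)\,dy+c_{hl}\phi(x).
\]
These formulas are stated for $\phi\in C^{2,\alpha}_{0,Y}$. For $\phi$ only in $C^{\alpha}_{0,Y}$, I would either regard this expression as the definition of $T_f\phi$, or mollify $\phi$ by group convolution with a homogeneous approximate identity, prove the estimate uniformly in the mollification, and pass to the limit.

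The term $c_{hl}\phi$ is harmless. Since $|c_{hl}|\le c(\mathbb{G})\Theta_{f,1}$ and, by (\ref{Hölder norm vs seminorm for u that vanishes at a point}) with $R\le1$ (as $\phi$ vanishes somewhere in $B_R$), $\|\phi\|_{C^\alpha_Y}\le c|\phi|_{C^\alpha_Y}$, it is controlled by $c|\phi|_{C^\alpha_Y(B_R)}$. Set $K=Y_hY_lf$. The vanishing property (\ref{vanishing prop Y_h Y_l f}), in the form of the zero principal value over balls, lets me subtract $\phi(x)$:
\[
\mathrm{P.V.}\int K(y^{-1}*x)\phi(y)\,dy=\int_{B_{\rho}(x)}K(y^{-1}*x)[\phi(y)-\phi(x)]\,dy+\int_{\mathbb{R}^N\setminus B_\rho(x)}K(y^{-1}*x)\phi(y)\,dy,
\]
where $\rho$ is a large multiple of $R$ chosen so that $B_\rho(x)\supset B_R$ for $x\in B_R$. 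The second integral is then zero, since $\operatorname{supp}\phi\subset B_R$. This requires checking that the quasidistance balls $\{y:\|y^{-1}*x\|<\rho\}$ match the level sets used in (\ref{vanishing prop Y_h Y_l f}); a change of variables $w=y^{-1}*x$ with Haar (Lebesgue) measure handles this.

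The sup bound then follows from (\ref{elliptic estimate Y_h Y_l f}), which gives $|K(w)|\le c\|w\|^{-Q}$ for $\|w\|\le R_0$:
\[
\int_{B_\rho(x)}\|y^{-1}*x\|^{-Q}\,d(x,y)^\alpha|\phi|_{C^\alpha}\,dy\le c\rho^\alpha|\phi|_{C^\alpha}.
\]
Since $\rho\lesssim R\le1$, the $L^\infty$ bound follows.

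For the Hölder seminorm, fix $x,x'\in B_R$ with $\delta=d(x,x')$ and split at radius $M'\delta$ around $x$, for a large constant $M'$ tied to the constant $M$ and the quasi-triangle constants.
\begin{itemize}
\item \emph{Near part.} On $d(x,y)<M'\delta$, I bound each of $x$ and $x'$ separately, using the $[\phi(y)-\phi(x)]$ and $[\phi(y)-\phi(x')]$ forms and the size estimate. Each contributes $c\delta^\alpha|\phi|_{C^\alpha}$.
\item \emph{Far part.} On the complement I write $K(y^{-1}*x)[\phi(y)-\phi(x)]-K(y^{-1}*x')[\phi(y)-\phi(x')]$ as
\[
[K(y^{-1}*x)-K(y^{-1}*x')][\phi(y)-\phi(x)]+K(y^{-1}*x')[\phi(x')-\phi(x)].
\]
For the first piece I apply (\ref{mean value ineq 6.25}) with the group decomposition $y^{-1}*x'=(y^{-1}*x)*(x^{-1}*x')$. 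Since (\ref{mean value ineq 6.25}) is stated for left multiplication, I would apply it to the kernel composed with inversion, or note that the same lemma holds for right multiplication. This gives
\[
\int_{d(x,y)>M'\delta}\delta\, d(x,y)^{-Q-1+\alpha}\,dy\le c\delta^\alpha,
\]
which is finite because $\alpha<1$.
\item \emph{Remaining piece.} The second piece is $[\phi(x')-\phi(x)]$ times the integral of $K(y^{-1}*x')$ over the region $\{d(x,y)>M'\delta\}\cap B_\rho$. This region is not a ball centred at $x'$. Using vanishing over balls centred at $x'$, it reduces to an integral of $|K|$ over a symmetric difference of balls contained in an annulus of comparable radii. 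That integral is bounded by a constant, which gives $c\delta^\alpha|\phi|_{C^\alpha}$.
\end{itemize}

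I expect the main obstacle to be this last bookkeeping step: controlling the noncentred domains given the quasi-symmetry of $d$, and making sure the outer cutoff radius $\rho$ can be taken equal for $x$ and $x'$ (choosing $\rho$ large relative to $R$ suffices). Convergence in $C^\alpha_Y$ by equivalence of $d$ and $d_Y$ then finishes the proof, with constants depending only on $\mathbb{G}$, $\alpha$ and $\Theta_{f,3}$, the latter through (\ref{elliptic estimate Y_h Y_l f})–(\ref{mean value ineq 6.25}) applied with $R_0$ a fixed multiple of $1$.
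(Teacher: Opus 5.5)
\paragraph{A gap in the outer truncation.} Most of your argument matches the paper's: you subtract $\phi(x)$ via the vanishing property, use the size bound for $L^\infty$, and split near/far at scale $M\delta$ with the mean-value inequality in the far region. The pieces you actually estimate are fine. In particular, a constant bound on the noncentred ``remaining piece'' suffices there, because its prefactor is $\phi(x')-\phi(x)$.

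The step that does not close is the outer truncation. You define $I(z)=\int_{B_\rho(z)}K(y^{-1}*z)[\phi(y)-\phi(z)]\,dy$ with a sharp ball centred at the evaluation point. You then treat $I(x)-I(x')$ as one integral over a common domain. Your remark that choosing $\rho$ large relative to $R$ suffices does not cover this, because equal radii do not give equal balls. Take $\rho$ large enough that $\phi=0$ on $B_\rho(x)\triangle B_\rho(x')$. Rewriting $I(x')$ as an integral over $B_\rho(x)$ then leaves, in $I(x)-I(x')$, the extra term
\[
-\phi(x')\,\mathrm{P.V.}\!\int_{B_\rho(x)}K(y^{-1}*x')\,dy=-\phi(x')\Big(\int_{B_\rho(x)\setminus B_\rho(x')}-\int_{B_\rho(x')\setminus B_\rho(x)}\Big)K(y^{-1}*x')\,dy,
\]
and there is no reason for it to vanish. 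Its prefactor is $\phi(x')$, not a difference. So the crude bound (symmetric difference inside an annulus of comparable radii, integral of $|K|$ bounded by a constant) gives only $c|\phi(x')|\le cR^{\alpha}|\phi|_{C^\alpha_Y(B_R(\overline{x}))}$. That is not $O(\delta^\alpha)$ when $\delta=d(x,x')\ll R$.

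\paragraph{Two repairs.} The first is to quantify the symmetric difference. Apply Lemma \ref{UGE Lemma 7.6} to the homogeneous norm; its $Y_i$-derivatives are $0$-homogeneous, hence bounded. This gives $B_\rho(x)\triangle B_\rho(x')\subset\{\rho-c\delta\le d(x,y)\le\rho+c\delta\}$, a set of measure at most $c\rho^{Q-1}\delta$. Since $|K(y^{-1}*x')|\lesssim\rho^{-Q}$ there, the term is $\lesssim R^\alpha\delta/\rho\lesssim\delta^\alpha$ once $\rho\ge R$. The second is to note that $I(z)$ does not depend on $\rho\ge\rho_0$, by vanishing on annuli, and let $\rho\to\infty$. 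Then $\rho^{-Q}|B_\rho(x)\triangle B_\rho(x')|\to0$ by $L^1$-continuity of translations after rescaling. Your far-part integral still converges at infinity because $\alpha<1$, and the size and mean-value bounds are global by homogeneity.

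\paragraph{How the paper avoids the issue.} It makes two choices you may want to adopt.
\begin{itemize}
\item It truncates smoothly, with a radial cutoff $\theta$ applied to the kernel: $I(x)=\int_{\mathbb{R}^N}(\theta K)(y^{-1}*x)[\phi(y)-\phi(x)]\,dy$. The integration domain is then the same for every point, and the truncation cost is absorbed by $|\theta(y^{-1}*x)-\theta(y^{-1}*x')|\le c\,\delta$ from Lemma \ref{UGE Lemma 7.6}. This is the smooth analogue of the measure estimate above.
\item It pairs the kernel difference with $\phi(y)-\phi(x')$ rather than $\phi(y)-\phi(x)$. The leftover is then $[\phi(x')-\phi(x)]\int_{d(x,y)\ge M\delta}(\theta K)(y^{-1}*x)\,dy$, which vanishes identically by (\ref{vanishing int with radial psi}) because kernel and domain are both centred at $x$.
\end{itemize}
With these choices, the noncentred bookkeeping you expected to be the main obstacle never arises.
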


\vspace{2mm}

\hspace{-7mm} For the proof, we will exploit multiple times the following result:

\vspace{2mm}

\begin{lemma}
\label{spherical integral lemma}
    For any $\beta>0$, there exists a constant $c=c(\beta)>0$ such that for all $r_2>r_1>0$ there hold
    \begin{align*}
        & \int_{d(x,y)>r_2} d(x,y)^{-\beta-Q} \, dy \leq c r_2^{-\beta}; \\
        & \int_{r_1<d(x,y)<r_2} d(x,y)^{-Q} \, dy \leq  c \, \log\left( \frac{r_2}{r_1} \right); \\
        & \int_{d(x,y)<r_1} d(x,y)^{\beta-Q} \, dy \leq c r_1^{\beta}.
    \end{align*}
\end{lemma}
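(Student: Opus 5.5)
By left-invariance of $d$ (translations $y\mapsto x*y$ preserve Lebesgue measure on a Carnot group) I would first reduce to $x=0$. Then the integrals become $\int_{\|y^{-1}\|>r_2}\|y^{-1}\|^{-\beta-Q}dy$ and similar. Since inversion $y\mapsto y^{-1}$ also preserves Haar (= Lebesgue) measure on a nilpotent group, this is equal to $\int_{\|w\|>r_2}\|w\|^{-\beta-Q}dw$. Hence everything reduces to radial integrals of powers of the smooth homogeneous norm $\|\cdot\|$.

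The key tool is the polar-coordinate/layer-cake formula for homogeneous norms. The Jacobian of $D_\lambda$ is $\lambda^Q$, so $|B_\rho(0)|=\omega_Q\rho^Q$ with $\omega_Q=|B_1(0)|$. For any nonnegative measurable $g$ on $(0,\infty)$, the layer-cake identity (or the coarea-type formula already used to derive (\ref{vanishing int with radial psi})) gives
\begin{equation*}
\int_{a<\|w\|<b} g(\|w\|)\,dw=Q\,\omega_Q\int_a^b g(\rho)\,\rho^{Q-1}\,d\rho .
\end{equation*}
A cheap alternative is dyadic decomposition: split into shells $2^k r\le\|w\|<2^{k+1}r$, each of measure $\omega_Q(2^Q-1)(2^kr)^Q$, and sum geometric series.

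Applying the formula in each case gives the three bounds directly. First,
\begin{equation*}
Q\omega_Q\int_{r_2}^\infty\rho^{-\beta-1}d\rho=\frac{Q\omega_Q}{\beta}\,r_2^{-\beta}.
\end{equation*}
Second,
\begin{equation*}
Q\omega_Q\int_{r_1}^{r_2}\rho^{-1}d\rho=Q\omega_Q\log(r_2/r_1).
\end{equation*}
Third, with $\beta>0$ ensuring convergence at the origin,
\begin{equation*}
Q\omega_Q\int_0^{r_1}\rho^{\beta-1}d\rho=\frac{Q\omega_Q}{\beta}\,r_1^{\beta}.
\end{equation*}
The constant is therefore $c=Q\omega_Q\max(1,1/\beta)$, depending only on $\beta$ and $\mathbb{G}$.

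The only point needing care is the measure-preservation of $y\mapsto y^{-1}*x$. Left translations have Jacobian $1$ because the group law in exponential coordinates is polynomial with triangular structure. Inversion is $y\mapsto -y$ in exponential coordinates, so it also preserves measure. If one prefers not to use inversion, quasisymmetry (iii') gives $d(x,y)\simeq d(y,x)=\|x^{-1}*y\|$, and $y\mapsto x^{-1}*y$ is a left translation; this changes only the constant (with $r_i$ rescaled by at most $c$, which is harmless in all three bounds since the log term changes by an additive constant $\log c^2$). However, that additive constant could break the pure $\log(r_2/r_1)$ form when $r_2/r_1$ is close to $1$. So I would use the exact inversion argument, which is the main (mild) obstacle.
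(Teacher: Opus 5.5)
Your argument is correct and is essentially the route behind the reference the paper cites (Folland's Corollary 1.6, i.e.\ integrating radial functions of a homogeneous norm in polar coordinates, with $|B_\rho(0)|=\omega_Q\rho^Q$ coming from the Jacobian of $D_\rho$); the paper itself gives no proof beyond that citation. Reducing $d(x,y)=\|y^{-1}*x\|$ to $\|w\|$ via left translation and the measure-preserving inversion, rather than via quasisymmetry, is the right choice, since it keeps the sharp $\log(r_2/r_1)$ bound when $r_2/r_1$ is close to $1$.
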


\vspace{2mm}

For the proof see \cite[Corollary 1.6]{Fo2}.

\vspace{2mm}

\begin{proof}[Proof of Theorem \ref{C^alpha estimate for T_f phi, 3.2}]
    The proof follows the lines of \cite[Lemma 2.7]{Hölder spaces: original vs lifted}, which proves a similar result for more general integral operators whose kernel is assumed to be $(-Q)$-homogeneous, smooth and to satisfy suitable estimates, similar to (\ref{elliptic estimate Y_h Y_l f})-(\ref{mean value ineq 6.25}). \\Let $0<R \leq 1$ and $\overline{x} \in \mathbb{R}^N$. Let us denote by $\kappa \geq 1$ the constant of the quasi-triangle inequality of $d(\cdot,\cdot)$; for all $z, \, \zeta \in B_R(\overline{x})$ we have
    \begin{equation*}
        ||z^{-1} * \zeta|| \leq \kappa(||z^{-1}||+||\zeta||) < \kappa(R+R) \leq 2 \kappa,
    \end{equation*}
     (this is crucial to apply properties (\ref{elliptic estimate Y_h Y_l f}) and (\ref{mean value ineq 6.25})). Given an arbitrary $x \in B_R(\overline{x})$, by (\ref{rep formula first derivatives (DCT)}) and (\ref{P.V. formula second derivatives}) we immediately get
    \begin{align*}
        & (T_f \phi)(x) = \lim_{\epsilon \to 0^+} \int_{d(x,y)>\epsilon} Y_h Y_l f(y^{-1} * x) \, \phi(y) \, dy + c_{hl} \phi(x)
    \end{align*}
    and since $\phi$ vanishes outside $B_R(\overline{x}) \subset B_{2\kappa R}(x) \subset B_{2\kappa}(x)$, we have
    \begin{align}
        & \nonumber (T_f \phi)(x) = \lim_{\epsilon \to 0^+} \int_{\epsilon < d(x,y) < 2 \kappa} Y_h Y_l f(y^{-1} * x) \, \phi(y) \, dy + c_{hl} \phi(x) \\
        & \nonumber \text{by (\ref{vanishing prop Y_h Y_l f})} \\
        & \nonumber = \lim_{\epsilon \to 0^+} \left\{\int_{\epsilon < d(x,y) < 2\kappa} Y_h Y_l f(y^{-1} * x) [\phi(y)-\phi(x)] \, dy \right\} + c_{hl} \phi(x) \\
        & \nonumber (\text{the above integral is absolutely convergent}) \\
        & \label{split of T_f phi in I and II} = \int_{d(x,y) < 2\kappa} Y_h Y_l f(y^{-1} * x) [\phi(y)-\phi(x)] \, dy + c_{hl} \phi(x) = I(x)+II(x).
    \end{align}
    We estimate the Hölder norms of $I$ and $II$ separately. As for $II$, 
    recalling that $|c_{hl}| \leq c(\mathbb{G}) \, \Theta_{f,1}$ and that $\phi(y)=0$ if $y \notin B_R(\overline{x})$, by (\ref{Hölder norm vs seminorm for u that vanishes at a point}) we readily get
    \begin{align}
        & \label{Hölder estimate for II} ||II||_{C^{\alpha}_Y(B_R(\overline{x}))} \leq c(\mathbb{G}) \, \Theta_{f,1} \, ||\phi||_{C^{\alpha}_Y(B_R(\overline{x}))}\leq c'(\mathbb{G},\alpha) \, \Theta_{f,1} \, |\phi|_{C^{\alpha}_Y(B_R(\overline{x}))}.
    \end{align}
    We turn to $I$: by (\ref{elliptic estimate Y_h Y_l f}) and Lemma \ref{spherical integral lemma} we have
    \begin{align*}
        & |I(x)| \leq c(\mathbb{G},  \Theta_{f,3}) |\phi|_{C^{\alpha}_Y(B_R(\overline{x}))} \int_{d(x,y)<2\kappa} d(x,y)^{\alpha-Q} \, dy\\
        & \leq c(\mathbb{G}, \Theta_{f,3}, \alpha) |\phi|_{C^{\alpha}_Y(B_R(\overline{x}))}
    \end{align*}
    for all $x \in B_R(\overline{x})$, so
    \begin{equation}
        \label{L^inf estimate for I}
        ||I||_{L^{\infty}(B_R(\overline{x}))} \leq c(\mathbb{G}, \Theta_{f,3}, \alpha) |\phi|_{C^{\alpha}_Y(B_R(\overline{x}))}.
    \end{equation} 
    As for the Hölder seminorm of $I$, let $\theta \in C^{\infty}_{0}(\mathbb{R}^N; [0,1])$ be a radial function such that $\theta \equiv 1$ in $B_{2\kappa}(0)$ and $\theta \equiv 0$ outside $B_{4\kappa}(0)$;
    we can write $I$ as
    \begin{align*}
        & I(x) = P.V. \int_{\mathbb{R}^N} (\theta Y_i Y_j f)(y^{-1}*x) \, \phi(y) \, dy= P.V. \int_{\mathbb{R}^N} (\theta Y_i Y_j f)(y^{-1}*x) \times\\
        & \times [\phi(y)-\phi(x)] \, dy + \phi(x) \cdot P.V. \int_{\mathbb{R}^N} (\theta Y_i Y_j f)(y^{-1}*x) \, dy\\
        & \text{by (\ref{vanishing P.V. int on R^N}) and absolute convergence of the first integral} \\
        & = \int_{\mathbb{R}^N} (\theta Y_i Y_j f)(y^{-1}*x) [\phi(y)-\phi(x)] \, dy + c_{hl} \phi(x)
        \quad \forall x \in B_R(\overline{x}).
    \end{align*}
    Then, given any $x, \, x' \in B_R(\overline{x})$, we have
    \begin{align*}
        & I(x)-I(x') = \int_{\mathbb{R}^N} \{(\theta Y_i Y_j f)(y^{-1}*x) [\phi(y)-\phi(x)] \, dy\\
        & -(\theta Y_i Y_j f)(y^{-1}*x')[\phi(y)-\phi(x')]\} \, dy \\
        & = \left(\int_{d(x,y) \geq Md(x,x')} + \int_{d(x,y)<Md(x,x')} \right) \{...\} \, dy = A(x,x')+B(x,x') 
    \end{align*}
    and we estimates these two terms separately. As for $A$, we can write
    \begin{align*}
        & A(x,x')=\int_{d(x,y) \geq Md(x,x')} [(\theta Y_h Y_l f)(y^{-1}*x)-(\theta Y_h Y_l f)(y^{-1}*x')] \times \\
        & \times [\phi(y)-\phi(x')] \, dy + [\phi(x')-\phi(x)] \int_{d(x,y) \geq Md(x,x')} (\theta Y_h Y_l f)(y^{-1}*x) \, dy
        \\
        & \text{again by (\ref{vanishing P.V. int on R^N})}
        \\
        & = \int_{d(x,y) \geq Md(x,x')} [(\theta Y_h Y_l f)(y^{-1}*x)-(\theta Y_h Y_l f)(y^{-1}*x')][\phi(y)-\phi(x')]\, dy
    \end{align*}
    We further split the integration domain $\{d(x,y) \geq Md(x,x')\}$ in two parts depending on whether $d(x',y)>4\kappa$ or $d(x',y) \leq 4\kappa$; we then split $A$ as the sum of the correspondent two terms, denoted respectively $A_1$ and $A_2$. As for $A_1$ (i.e., when $d(x',y)>4\kappa$), by quasi-triangle inequality
    \begin{equation*}
        d(y,\overline{x})>\kappa^{-1}d(x',y)-d(x,x')>4-R>R,
    \end{equation*}
    hence both $(\theta Y_h Y_l f)(y^{-1}*x')$ and $\phi(y)$ vanishes; it follows that
    \begin{equation*}
        A_1(x,x') = -\phi(x') \int_{\{d(x,y) \geq Md(x,x')\} \cap \{d(x',y)>4\kappa\}} (\theta Y_h Y_l f)(y^{-1}*x) \, dy = 0.
    \end{equation*}
    by (\ref{vanishing int with radial psi}). As for $A_2$, by Lemma \ref{UGE Lemma 7.6} and the boundedness of $\{Y_i \theta\}_{i=1}^m$, we have
    \begin{align*}
        & |\theta(y^{-1}*x)-\theta(y^{-1}*x')| \leq \overline{c}d(x,x') \sup_{d(x',z) \leq \overline{c} d(x,x')} \sum_{i=1}^m |Y_i \theta(y^{-1} * z)| \leq \overline{\overline{c}} d(x,x').
    \end{align*}
    Then, in view also of (\ref{elliptic estimate Y_h Y_l f})-(\ref{mean value ineq 6.25}), for every $d(x,y) \geq Md(x,x')$ there holds
    \begin{align*}
        & |(\theta Y_h Y_l f)(y^{-1}*x)-(\theta Y_h Y_l f)(y^{-1}*x')| \leq |\theta(y^{-1}*x) \cdot [(Y_h Y_l f)(y^{-1}*x)\\
        & -(Y_h Y_l f)(y^{-1}*x')]|+ |[\theta(y^{-1}*x)-\theta(y^{-1}*x')] \cdot (Y_h Y_l f)(y^{-1}*x')| \\
        & \leq c_1(\Theta_{f,3}) \, d(x,x') \, d(x',y)^{-Q-1} + c_2(\Theta_{f,2}) \, d(x,x') \, d(x',y)^{-Q},
    \end{align*}
    and since we are considering $d(x',y)<4\kappa$
    \begin{equation*}
        |(\theta Y_h Y_l f)(y^{-1}*x)-(\theta Y_h Y_l f)(y^{-1}*x')| \leq c(\Theta_{f,3}) \, d(x,x') \, d(x',y)^{-Q-1}.
    \end{equation*}
    This in turn gives 
    \begin{align*}
        & |A_2(x,x')| \leq c(\Theta_{f,3}) |\phi|_{C^{\alpha}_Y(B_R(\overline{x}))} d(x,x') \int_{d(x,y) \geq Md(x,x')^{\alpha}} d(x',y)^{-1-Q+\alpha} dy \\
        & \leq c(\Theta_{f,3},\alpha)  |\phi|_{C^{\alpha}_Y(B_R(\overline{x}))} d(x,x')^{\alpha}.
    \end{align*}
    As for $B$, we further split
    \begin{align*}
        & |B(x,x')| \leq \int_{d(x,y)<Md(x,x')} |(\theta Y_i Y_j f)(y^{-1}*x) [\phi(y)-\phi(x)| \, dy \\
        & + \int_{d(x,y)<Md(x,x')} |(\theta Y_i Y_j f)(y^{-1}*x') [\phi(y)-\phi(x')]| \, dy = B_1(x,x')+B_2(x,x').
    \end{align*}
    By quasi-triangle inequality, for every $d(x,y)<Md(x,x')$ we have
    \begin{align*}
        d(x',y) \leq \kappa \, (d(x,y)+d(x,x')) \leq \kappa (M+1) d(x,x') =: M' d(x,x'),
    \end{align*}
    hence
    \begin{align*}
        & |B_1(x,x')| \leq c(\Theta_{f,3}) |\phi|_{C^{\alpha}_Y(B_R)} \int_{d(x,y)<Md(x,x')} d(x,y)^{\alpha-Q} \, dy \\
        & \hspace{5mm}\leq c(\Theta_{f,3}) |\phi|_{C^{\alpha}_Y(B_R)} d(x,x')^{\alpha};
        \\
        & |B_2(x,x')| \leq c'(\Theta_{f,3}) |\phi|_{C^{\alpha}_Y(B_R(\overline{x}))} \int_{d(x',y)<M'd(x,x')} d(x',y)^{\alpha-Q} dy \\
        & \quad \leq c'(\Theta_{f,3}) |\phi|_{C^{\alpha}_Y(B_R(\overline{x}))} d(x,x')^{\alpha}.
    \end{align*}
    Together with the estimate of $A_2$, the latter give
    \begin{align*}
        |I|_{C^{\alpha}_Y(B_R(\overline{x}))} \leq c(\Theta_{f,3}) |\phi|_{C^{\alpha}_Y(B_R(\overline{x}))},
    \end{align*}
    with $c$ independent of $R$. Plugging the above with (\ref{Hölder estimate for II})-(\ref{L^inf estimate for I}) in (\ref{split of T_f phi in I and II}) we achieve
    \begin{equation*}
        ||T_f \phi||_{C^{\alpha}_Y(B_R(\overline{x}))} \leq c(\Theta_{f,3}) |\phi|_{C^{\alpha}_Y(B_R(\overline{x}))}.
    \end{equation*}
\end{proof}

\vspace{2mm}

\hspace{-7mm} As a sort of special case of the above theorem, we deduce the:

\vspace{2mm}

\begin{proof}[Proof of Theorem \ref{local C^alpha estimate thm}]
    Let $0 < R \leq 1$ to be chosen later, $\overline{x} \in \mathbb{R}^N$ and $u$ as in the statement of the theorem. Since $L_{\overline{x}}u \in C^{\alpha}_{0,Y}(B_{R}(\overline{x}))$ and $\Gamma_{\overline{x}}$ is smooth outside the origin and $(2-Q)$-homogeneous by Theorem \ref{freezed fund sol}, then we can apply Theorem \ref{C^alpha estimate for T_f phi, 3.2} with $(f,\phi)=(\Gamma_{\overline{x}}, L_{\overline{x}}u)$: we obtain
    \begin{equation}
        \label{application of 3.2 to L_x(u) and Gamma_x} ||T_{\Gamma_{\overline{x}}}(L_{\overline{x}}u)||_{C^{\alpha}_Y(B_R(\overline{x}))} \leq c(\mathbb{G}, \alpha, \Theta_{\Gamma_{\overline{x}},3}) \, |L_{\overline{x}}u|_{C^{\alpha}_Y(B_R(\overline{x}))}
    \end{equation}
    where, recalling the definition of the operator $T_f$, by (\ref{rep formula u: L Gamma = delta}) we have
    \begin{align*}
        T_{\Gamma_{\overline{x}}}(L_{\overline{x}}u) = Y_h Y_l \int_{\mathbb{R}^N} \Gamma_{\overline{x}}(y^{-1}*x) \, L_{\overline{x}}u(y) \, dy = Y_h Y_l u(x).
    \end{align*}
    Moreover $\Theta_{\Gamma_{\overline{x}},3} \leq c(\mathbb{G}, \Lambda)$ by (\ref{uniform estimates Y_I Gamma}) and, in view of Lemma \ref{C^alpha estimate for L_x-L}:
    \begin{align*}
        & |L_{\overline{x}}u|_{C^{\alpha}_Y(B_R(\overline{x}))} \leq |Lu|_{C^{\alpha}_Y(B_R(\overline{x}))} + |(L_{\overline{x}}-L)u|_{C^{\alpha}_Y(B_R(\overline{x}))} \\
        & \quad \leq |Lu|_{C^{\alpha}_Y(B_R(\overline{x}))} + c(\alpha, ||a||_0) R^{\alpha} |D^2 u|_{C^{\alpha}_Y(B_R(\overline{x}))}.
    \end{align*}
    Plugging all these in the inequality (\ref{application of 3.2 to L_x(u) and Gamma_x}) and summing over $h, \, l=1, \, ..., \, m$:
    \begin{align*}
        & ||D^2  u||_{C^{\alpha}_Y(B_R(\overline{x}))} \leq c_1(\mathbb{G}, \Lambda, \alpha) |Lu|_{C^{\alpha}_Y(B_R(\overline{x}))}  \\
        & \hspace{2.55cm} + c_2(\mathbb{G}, \Lambda, \alpha, ||a||_0) R^{\alpha} |D^2u|_{C^{\alpha}_Y(B_R(\overline{x}))}.
    \end{align*}
    Setting $R_0=R_0(\mathbb{G}, \Lambda, \alpha, ||a||_0):=\min\left\{1, \, (2c_2)^{-1/\alpha} \right\}$, we deduce that for every $0 < R \leq R_0$ it holds
    \begin{equation*}
        ||D^2 u||_{C^{\alpha}_Y(B_R(\overline{x}))} \leq c(\mathbb{G}, \Lambda, \alpha) |Lu|_{C^{\alpha}_Y(B_R(\overline{x}))}.
    \end{equation*}
\end{proof}

\vspace{5mm}

\subsection{From local to global}
In order to achieve the global estimate (\ref{Schauder est Carnot eqn}) starting from the local version (\ref{||Y_h Y_l u|| <= C ||Lu||}), we need some interpolation inequalities to help us handling the Hölder norms of $u$ and its first order derivatives (note indeed that Theorem \ref{local C^alpha estimate thm} gives an estimate only for second order derivatives). We will exploit the following:

\vspace{2mm}

\begin{theorem}[Interpolation inequality]
    \label{Real interp ineq thm}
    Let $R>0$ and $\overline{x} \in \mathbb{R}^N$. There exist constants $c(\mathbb{G}, \Lambda)>0$ and $\gamma=\gamma(\mathbb{G})>1$ (both independent of $R$ and $\overline{x}$) such that for any $u \in C^{2,\alpha}_Y(B_R(\overline{x}))$, $0 < \epsilon < \frac{1}{3}$ and $\frac{R}{2}<r<R$ it holds
    \begin{equation*}
        ||u||_{C^{1,\alpha}_Y(B_r(\overline{x}))} \leq \epsilon \, \sum_{h=1}^m ||Y_h^2 u||_{L^{\infty}(B_R(\overline{x}))} + \frac{c}{\epsilon^\gamma (R-r)^{\gamma}} ||u||_{L^{\infty}(B_R(\overline{x}))}.
    \end{equation*}
\end{theorem}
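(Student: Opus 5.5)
We need to plan a proof of the interpolation inequality: $\|u\|_{C^{1,\alpha}_Y(B_r)}\le \epsilon\sum_h\|Y_h^2u\|_{L^\infty(B_R)}+c\,\epsilon^{-\gamma}(R-r)^{-\gamma}\|u\|_{L^\infty(B_R)}$.

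The plan is to reduce everything to one-dimensional calculus along the integral curves of the generators, and then to transfer the bounds to Hölder seminorms via Lemma \ref{UGE Lemma 7.6}. We will also use the left-invariance and homogeneity of $d$ and of the $Y_h$.

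\textbf{Step 1: first derivatives.} Fix $x\in B_r(\overline{x})$ and $h$, and consider the curve $t\mapsto x*\exp(tY_h)$. Since $Y_h$ is left-invariant and $1$-homogeneous, this curve stays in $B_R(\overline{x})$ for $|t|\le c(R-r)$, with a small constant $c=c(\mathbb{G})$ chosen via the quasi-triangle inequality and $\|\exp(tY_h)\|=|t|\,\|\exp(Y_h)\|$.

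Set $g(t)=u(x*\exp(tY_h))$. Then $g'=Y_hu$ and $g''=Y_h^2u$ along the curve. The classical one-dimensional interpolation gives
\[
|Y_hu(x)|\le \delta\,\|Y_h^2u\|_{L^\infty(B_R)}+\frac{c}{\delta}\|u\|_{L^\infty(B_R)}
\]
for any $0<\delta\le c(R-r)$. To obtain this, Taylor-expand $g$ at $0$ with step $\delta$ and take a difference quotient.

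When $\delta$ would exceed $c(R-r)$, we take $\delta=c(R-r)$ instead. This costs at most a term $(R-r)^{-1}\|u\|_\infty$, which is absorbed since $\epsilon<1/3$ and the exponents are chosen generously. This yields $\|D^1u\|_{L^\infty(B_r)}$ bounded by $\epsilon\sum_h\|Y_h^2u\|_\infty+c\,\epsilon^{-1}(R-r)^{-1}\|u\|_\infty$.

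\textbf{Step 2: Hölder seminorms.} These are the main obstacle, because the statement controls only pure squares $Y_h^2u$, not the mixed derivatives $Y_kY_hu$ that would naturally bound $|Y_hu|_{C^\alpha}$.

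For $|u|_{C^\alpha}$ the argument is easy. If $d(x,x')$ is small compared with $R-r$, Lemma \ref{UGE Lemma 7.6} gives $|u(x)-u(x')|\le c\,d(x,x')\sup|Du|$ on an intermediate ball $B_{(r+R)/2}$. Step 1 is then applied there, and $d^{1-\alpha}$ is bounded by a power of $R-r$ (or of $\epsilon$). If $d(x,x')$ is large, we simply use $2\|u\|_\infty\,d^{-\alpha}$.

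For $|Y_hu|_{C^\alpha}$, we first try to avoid mixed derivatives with a second-difference representation. Write $Y_hu(x)$ as
\[
\frac{u(x*e^{sY_h})-u(x)}{s}-\frac1s\int_0^s(s-\tau)\,Y_h^2u(x*e^{\tau Y_h})\,d\tau,
\]
with $s$ tied to $d(x,x')$. The difference at $x$ and $x'$ is then bounded as follows. The first terms are controlled by $|u|_{C^\alpha}$-type increments: we use $d(x*e^{sY_h},x'*e^{sY_h})\lesssim d(x,x')^{1/\sigma_N}s^{1-1/\sigma_N}+d(x,x')$, which follows from the Baker–Campbell–Hausdorff formula and homogeneity. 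The remainder is $\le s\,\|Y_h^2u\|_\infty$.

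Optimizing $s$ as a power of $d(x,x')$ produces an estimate of the form
\[
|Y_hu(x)-Y_hu(x')|\le d^{\alpha}\Big(\epsilon\|Y_h^2u\|_\infty+C\epsilon^{-\gamma}(R-r)^{-\gamma}\|u\|_\infty\Big),
\]
where $\gamma$ is large, depending on $\sigma_N$ (this is why $\gamma>1$ depends only on $\mathbb{G}$).

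If the BCH comparison turns out to be too lossy, the fallback is different. We would use the known fact that $\sum_h\|Y_h^2u\|_\infty$ together with $\|u\|_\infty$ controls $\|u\|_{C^{1,\beta}_Y}$ for every $\beta<1$ on smaller balls. This holds because, by the group structure, each point near $x$ is reachable through a bounded number of horizontal straight segments $e^{tY_{h}}$ (a Chow-type factorization with controlled lengths). We would then apply the one-dimensional estimate along each segment.

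Finally, we sum the estimates over $h$ and over the terms of the norm, and we rescale $\epsilon$ to get the stated form.
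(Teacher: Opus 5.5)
The paper gives no proof of this statement; it quotes Proposition 8.56 of the Bramanti--Brandolini monograph. Judged on its own, your Step 1 and your bound for $|u|_{C^\alpha_Y}$ are fine, up to the scale caveat below. The bound for $|Y_hu|_{C^\alpha_Y}$, which is the real content, has a genuine gap. In your second-difference formula, the only information you use on the increments $u(x*e^{sY_h})-u(x'*e^{sY_h})$ and $u(x)-u(x')$ is $\|Du\|_{L^\infty}$. So for $d=d(x,x')\le s$ you get at best
\[
|Y_hu(x)-Y_hu(x')|\lesssim (d/s)^{1/\sigma_N}\,\|Du\|_{L^\infty}+s\,\|Y_h^2u\|_{L^\infty}.
\]
Both terms are $O(d^\alpha)$ only if $d^{1-\alpha\sigma_N}\lesssim s\lesssim d^{\alpha}$, which is possible for small $d$ only when $\alpha\le 1/(\sigma_N+1)$. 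This is not an artifact of a crude BCH bound. In the Heisenberg group, a horizontal displacement of size $d$ transversal to $Y_h$, conjugated by $e^{sY_h}$, acquires a vertical component of size $sd$, hence has norm $\sim\sqrt{sd}$. Even in the abelian case, Lipschitz bounds alone stop this trick at $\alpha=1/2$. Since $\alpha\in(0,1)$ is arbitrary, ``optimizing $s$'' does not give the claimed estimate.

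The fallback does not repair this. The ``known fact'' you invoke is essentially the statement to be proved. A Chow-type factorization into horizontal segments only controls increments of $u$ itself, which is Lemma \ref{UGE Lemma 7.6} again. Following $Y_hu$ along a segment $e^{tY_k}$ with $k\neq h$ requires $Y_kY_hu$, a mixed derivative that $\sum_h\|Y_h^2u\|_{L^\infty}$ does not control pointwise, not even in $\mathbb{R}^n$.

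What is missing is an argument that uses only the sublaplacian $\Delta_{\mathbb{G}}u=\sum_hY_h^2u$, which is bounded by $\sum_h\|Y_h^2u\|_{L^\infty}$. Let $\Gamma$ be the fundamental solution of $\Delta_{\mathbb{G}}$ (Theorem \ref{freezed fund sol} with $A=I$) and $\chi$ a cutoff adapted to a ball of radius $\rho$. Write $Y_h(\chi u)(x)=\int Y_h\Gamma(y^{-1}*x)\,\Delta_{\mathbb{G}}(\chi u)(y)\,dy$. Use the bound $|Y_h\Gamma(w)|\lesssim\|w\|^{1-Q}$, and $|Y_h\Gamma(y^{-1}*x)-Y_h\Gamma(y^{-1}*x')|\lesssim d(x,x')\,d(x,y)^{-Q}$ for $d(x,y)\ge M d(x,x')$ (the analogue of (\ref{mean value ineq 6.25}) for the $(1-Q)$-homogeneous kernel). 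These give
\[
|Y_h(\chi u)(x)-Y_h(\chi u)(x')|\lesssim d\,\big(1+\log_+(\rho/d)\big)\,\|\Delta_{\mathbb{G}}(\chi u)\|_{L^\infty},
\]
that is, log-Lipschitz continuity, hence $C^\alpha$ for every $\alpha<1$. Your Step 1 handles the lower-order terms of $\Delta_{\mathbb{G}}(\chi u)$, and taking $\rho$ a suitable power of $\epsilon$ gives the $\epsilon$-form.

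Two further points your write-up glosses over:
\begin{enumerate}
\item $\gamma$ cannot depend only on $\mathbb{G}$. Testing on $u=\psi(D_{1/\lambda}(\overline{x}^{-1}*\cdot))$ with $\lambda\to0$, and optimizing in $\epsilon$, shows that $\gamma\ge(1+\alpha)/(1-\alpha)$ is necessary. Your remark on why $\gamma=\gamma(\mathbb{G})$ is therefore consistent only with the restricted range of $\alpha$ your method reaches.
\item The inequality needs $R$ bounded, e.g.\ $R\le1$ as in the application. For $u\equiv1$ the left side is $1$, while for fixed $\epsilon$ the right side tends to $0$ as $R-r\to\infty$. Consequently, your absorption of $(R-r)^{-1}$ into $(R-r)^{-\gamma}$ only works when $R-r$ is bounded. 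The same restriction is needed to control the term $\|u\|_{L^\infty(B_r)}$ in the $C^{1,\alpha}_Y$ norm, which your proposal never addresses.
\end{enumerate}
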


\vspace{2mm}

\hspace{-7mm} For the proof see \cite[Proposition 8.56]{librone}.

With Theorems \ref{local C^alpha estimate thm}-\ref{Real interp ineq thm} at hand, we finally address the:

\vspace{2mm}

\begin{proof}[Proof of Theorem \ref{1.4: Schauder est Carnot thm}]
    Let $R>0$ such that the local estimate (\ref{||Y_h Y_l u|| <= C ||Lu||}) holds for every $v \in C^{2,\alpha}_Y(\mathbb{R}^N)$ vanishing outside a ball of radius $2R$. Let $\phi \in C^{\infty}_0(\mathbb{R}^N)$ s.t.
    \begin{equation*}
        \phi \equiv 1 \: \text{in} \, B_R(\overline{x}), \quad \phi \equiv 0 \: \text{outside} \, B_{2R}(\overline{x}), \quad 0 \leq \phi \leq 1 \: \text{in} \, \mathbb{R}^N.
    \end{equation*}
    Let $\{B_R(\overline{x_i})\}_{i \in \mathbb{N}}$ be a covering of $\mathbb{R}^N$ and $\varphi_i(x):=\phi(\overline{x_i}^{-1} * x)$ for every $i \geq 1$. \\Given $u \in C^{2,\alpha}_Y(\mathbb{R}^N)$, we have $u \varphi_i \in C^{2,\alpha}_Y(\mathbb{R}^N)$, $supp(u \varphi_i) \subset B_{2R}(\overline{x_i})$ and
    \begin{equation*}
        Y_h Y_l(u \varphi_i)= \varphi_i \cdot Y_h Y_l u + Y_h \varphi_i \cdot Y_l u + Y_l \varphi_i \cdot Y_l u + Y_h Y_l \varphi_i \cdot u \equiv Y_h Y_l u \hspace{5mm} \text{in } B_R(\overline{x_i}).
    \end{equation*}
    In view of the estimate (\ref{||Y_h Y_l u|| <= C ||Lu||}) for $u \varphi_i$ we have (for some $c_0=c_0(\mathbb{G}, \Lambda, \alpha, ||a||_0)$)
    \begin{equation}
    \label{intermediate est Hölder}
        ||Y_h Y_l u||_{C^{\alpha}_Y(B_R(\, \overline{x_i}))} \leq  ||Y_h Y_l (u \varphi_i)||_{C^{\alpha}_Y(B_{2R}(\overline{x_i}))} \leq c_0 \, ||L(u \varphi_i)||_{C^{\alpha}_Y(B_{2R}(\overline{x_i}))}.
    \end{equation}
    For simplicity, we denote $B_i:=B_{2R}(\overline{x_i})$ and $B_0:=B_{2R}(0)$. Note that 
    \begin{align*}
        & L(u \varphi_i) = \varphi_i \cdot Lu + u \cdot L\varphi_i + 2 a^{hl} \, Y_h u \cdot Y_l \varphi_i
    \end{align*}
    and exploiting the left-invariance of $Y_1,...,Y_m$
    \begin{align*}
        & ||\varphi_i||_{C^{\alpha}_Y(B_i)}=||\phi||_{C^{\alpha}_Y(B_0)}; \quad ||\varphi_i||_{C^{k,\alpha}_Y(B_i)} = ||\phi||_{C^{k,\alpha}_Y(B_0)} \quad \forall k \geq 1; \\
        & ||L \varphi_i||_{C^{\alpha}_Y(B_i)} \leq ||a^{hl}||_{C^{\alpha}_Y(B_i)} ||Y_h Y_l \varphi_i||_{C^{\alpha}_Y(B_i)} \leq c(||a||_0) ||\phi||_{C^{2,\alpha}_Y(B_0)}
    \end{align*}
    for all $i \in \mathbb{N}$. Then
    \begin{align*}
        & ||L(u \varphi_i)||_{C^{\alpha}_Y(B_i)} \leq ||\phi||_{C^{2,\alpha}_Y(\mathbb{R}^N)} \{ ||Lu||_{C^{\alpha}_Y(B_i)} + c(||a||)_0  ||u||_{C^{1,\alpha}_Y(B_i)} \}.
    \end{align*}
    Inserting this in (\ref{intermediate est Hölder}) and summing over $1 \leq h, l \leq m$ we get
    \begin{align*}
        & ||D^2 u||_{C^{\alpha}_Y(B_R(\overline{x_i}))} \leq c'_0 \left\{||Lu||_{C^{\alpha}_Y(B_{2R}(\overline{x_i}))} + ||u||_{C^{1,\alpha}_Y(B_{2R}(\overline{x_i}))} \right\}
    \end{align*}
    Adding $||u||_{C^{1,\alpha}_Y(B_{R}(\overline{x_i}))}$ to both sides and using Theorem \ref{Real interp ineq thm} we find
    \begin{align*}
        & ||u||_{C^{2,\alpha}_Y(B_R(\overline{x_i}))} \leq c_0'' \left\{ ||Lu||_{C^{\alpha}_Y(B_{2R}(\overline{x_i}))} + ||u||_{C^{1,\alpha}_Y(B_{2R}(\overline{x_i}))} \right\} \\
        & \leq c_0'' \, \left\{ ||Lu||_{C^{\alpha}_Y(B_{2R}(\overline{x_i}))} + \epsilon \sum_{h=1}^m ||Y_h^2 u||_{L^{\infty}(B_{3R}(\overline{x_i}))} + \frac{c}{\epsilon^{\gamma}} ||u||_{L^{\infty}(B_{3R}(\overline{x_i}))} \right\} \\
        & \leq c_0'' \, \left\{ ||Lu||_{C^{\alpha}_Y(
        \mathbb{R}^N)} + \epsilon \sum_{h=1}^m ||Y_h^2 u||_{L^{\infty}(\mathbb{R}^N)} + \frac{c}{\epsilon^{\gamma}} ||u||_{L^{\infty}(\mathbb{R}^N)} \right\}
    \end{align*}
    for any $0 < \epsilon < \frac{1}{3}$, for some positive constants $c_0''(\mathbb{G}, \lambda, \alpha, ||a||_0)$ and $c(\mathbb{G}, \Lambda)$.
    Since the latter holds true for every $i \geq 1$ with $c_0''$ and $c$ independent from $i$ by construction, then taking the supremum over $i \geq 1$ we obtain
    \begin{align*}
        & ||u||_{C^{2,\alpha}_Y(\mathbb{R}^N)} \leq c_0'' \, \left\{ ||Lu||_{C^{\alpha}_Y(
        \mathbb{R}^N)} + \epsilon \sum_{h=1}^m ||Y_h^2 u||_{L^{\infty}(\mathbb{R}^N)} + \frac{c}{\epsilon^{\gamma}} ||u||_{L^{\infty}(\mathbb{R}^N)} \right\}
    \end{align*}
    for every $0 < \epsilon < \frac{1}{3}$. Finally, choosing $\epsilon=\overline{\epsilon}$ such that $c_0'' \overline{\epsilon} < \frac{1}{2}$ we conclude:
    \begin{align*}
        & \hspace{-1.85cm} ||u||_{C^{2,\alpha}_Y(\mathbb{R}^N)} \leq 2c_0' ||Lu||_{C^{\alpha}_Y(\mathbb{R}^N)} + \frac{2c'_0 c_2}{(2c'_0 c_1)^{\gamma}} ||u||_{L^{\infty}(\mathbb{R}^N)} \\
        & \leq \widehat{c}_1 ||Lu||_{C^{\alpha}_Y(\mathbb{R}^N)} + \widehat{c}_2 ||u||_{L^{\infty}(\mathbb{R}^N)}
    \end{align*}
    with $\widehat{c}_1, \, \widehat{c}_2$ depending only on $\mathbb{G}, \, \Lambda,\,  \alpha, \, ||a||_0$.
\end{proof}

\vspace{2mm}

\section{Estimates for higher order derivatives}
\label{sec 4}
    Let us now prove estimates in Hölder spaces for higher-order derivatives of any smooth function, namely (\ref{global est higher derivs thm}). To this aim, we ask stronger regularity on the coefficients $a_{ij}$, that is $a_{ij} \in C^{k,\alpha}_Y(\mathbb{R}^N)$ for every $1 \leq i, \, j \leq m$, so that $Lu \in C^{k,\alpha}_Y(\mathbb{R}^N)$ for any $u \in C^{k+2,\alpha}_Y(\mathbb{R}^N)$.
\\
We point out that, in contrast with the corresponding Euclidean case, due to the noncommutativity of $\mathbb{G}$ the latter result is not an easy consequence of Theorem \ref{1.4: Schauder est Carnot thm}. 
Instead, what we need for the proof are suitable representation formulas of $Y_I u$ in terms of $L_{\overline{x}} u$, for any multindex $I$. 

\vspace{2mm}

\begin{theorem}
\label{rep formula higher derivs with Gamma^J}
    For every $k \geq 1$, $u \in C^{k+2,\alpha}_{Y,0}(\mathbb{R}^N)$ and $I=(i_1,...,i_k)$, the following representation formula holds:
    \begin{equation}
    \label{rep formula Y_I u}
        Y_I u(x) = \int_{\mathbb{R}^N} \Gamma_{\overline{x}}^{I,j_1...j_k}(y^{-1} * x) \, Y_{j_1}...Y_{j_k} L_{\overline{x}} u(y) \, dy,
    \end{equation}
    for suitable functions $\{\Gamma_{\overline{x}}^{I; \, j_1...j_k}\}_{1 \leq j_1,...,j_k \leq m} \in C^{\infty}(\mathbb{R}^N \setminus \{0\})$,  homogeneous of degree $2-Q$ and satisfying the following uniform estimate for every $p \geq 0$:
    \begin{equation}
        \label{uniform estimates for Gamma^J} \Theta_{\Gamma_{\overline{x}}^{i_1...i_k; \, j_1...j_k}, \, p} := \sup_{|H| \leq p} \, \sup_{||x||=1} |Y_H  \Gamma_{\overline{x}}^{i_1...i_k; \, j_1...j_k}(x)| \leq c(p,k,\mathbb{G}, \Lambda).
    \end{equation}
\end{theorem}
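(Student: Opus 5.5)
The natural route is to start from the basic representation formula (\ref{rep formula u: L Gamma = delta}) and move the left-invariant derivatives $Y_{i_1},\dots,Y_{i_k}$ off $u$ and onto $L_{\overline{x}}u$. Differentiating under the integral, $Y_I u(x)=Y_I\int \Gamma_{\overline{x}}(y^{-1}*x)\,L_{\overline{x}}u(y)\,dy$. We want instead a formula with derivatives falling on $L_{\overline{x}}u$, and the tool for this is (\ref{IPP left-right invariant vf}). Derivatives in $x$ of $y\mapsto \Gamma(y^{-1}*x)$ along left-invariant fields act on $\Gamma$ as left-invariant derivatives, and integration by parts then transfers right-invariant derivatives of $\Gamma$ into left-invariant derivatives of the other factor.

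A cleaner way to organize this is via convolution. Write $u = (L_{\overline{x}}u) * \Gamma_{\overline{x}}$, where $(g*\Gamma)(x)=\int g(y)\Gamma(y^{-1}*x)\,dy$. Left-invariant derivatives commute with left translations, so $Y_i(g*\Gamma)=g*(Y_i\Gamma)$, and by (\ref{IPP left-right invariant vf}) we also have $(Y_i g)*\Gamma = g*(Y_i^R\Gamma)$. So the goal is to express $Y_I\Gamma_{\overline{x}}$, with $Y_I$ of weight $k$, as a combination of right-invariant derivatives of weight $k$ applied to $(2-Q)$-homogeneous kernels.

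\textbf{Key algebraic step.} Write each $Y_i$ as $\sum_{j=1}^N r_{ij}(x)Y_j^R$, where the coefficients $r_{ij}$ are polynomials homogeneous of degree $\sigma_j-\sigma_i$ (a standard fact on Carnot groups), so $r_{ij}$ vanishes when $\sigma_j<\sigma_i$.

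For $i\le m$ we get $Y_i=\sum_j r_{ij}Y^R_j$. The fields $Y_j^R$ with $\sigma_j>1$ are iterated commutators of $Y_1^R,\dots,Y_m^R$. By (\ref{Y_j and r_ij commute by homogeneity}), a field of degree $\sigma_j$ commutes with multiplication by polynomials of degree $<\sigma_j$, and similarly right-invariant fields can be moved past polynomials of low degree. Iterating, $Y_I\Gamma$ becomes a sum of terms $Y^R_{j_1}\cdots Y^R_{j_k}(p\,\Gamma)$, with the $j$'s in $\{1,\dots,m\}$ and $p$ homogeneous of degree $0$. A degree-$0$ polynomial is constant, so each $p\Gamma$ is $(2-Q)$-homogeneous.

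More precisely, I would proceed by induction on $k$. Assume
\[
Y_I u = \sum_J (Y_J L_{\overline{x}}u) * \Gamma^{I;J},
\]
with $\Gamma^{I;J}$ being $(2-Q)$-homogeneous. Then
\[
Y_{i}Y_I u = \sum_J (Y_J L_{\overline{x}}u) * (Y_{i}\Gamma^{I;J}),
\]
and $Y_i\Gamma^{I;J}$ is $(1-Q)$-homogeneous and smooth away from $0$. The needed lemma is that any $(1-Q)$-homogeneous smooth kernel $K$ with $\int$-type cancellation (it is a derivative) can be written as $K=\sum_{j\le m} Y^R_j K_j$, with $K_j$ being $(2-Q)$-homogeneous, smooth away from $0$, and with bounds on the $K_j$ controlled by those of $K$. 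This can be obtained from the expansion $Y_i=\sum r_{ij}Y_j^R$. Then (\ref{IPP left-right invariant vf}) gives
\[
(Y_J L_{\overline{x}}u)*(Y_j^R K_j)=(Y_jY_JL_{\overline{x}}u)*K_j,
\]
which closes the induction after reindexing.

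\textbf{Obstacle and estimates.} The delicate point is the factorization $Y_i\Gamma=\sum_{j\le m}Y^R_j(\cdot)$: the terms with $\sigma_j\ge2$ involve commutators $[Y^R_a,Y^R_b]$, and these must be re-expanded so that the homogeneity degrees match exactly, via (\ref{Y_j and r_ij commute by homogeneity}). The integration by parts also needs justification near the singularity, using the decay $\|y\|^{2-Q}$ together with compact support of $u$; this is where $Q>2$ is used.

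The uniform bounds (\ref{uniform estimates for Gamma^J}) follow from Theorem \ref{freezed fund sol}: each $\Gamma^{I;J}_{\overline{x}}$ is a finite combination, with coefficients depending only on $\mathbb{G}$ and $k$, of polynomial multiples of derivatives of $\Gamma_{\overline{x}}$ up to order $k$. Hence $\Theta_{\Gamma^{I;J},p}\le c(\mathbb{G},k)\Theta_{\Gamma_{\overline{x}},p+k}\le c(p,k,\mathbb{G},\Lambda)$.
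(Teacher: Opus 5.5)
\textbf{Verdict: genuine gap at the central step.} Your skeleton is the paper's. You expand $Y_i=\sum_j r_{ij}Y_j^R$, commute $r_{ij}$ past $Y_j^R$ by (\ref{Y_j and r_ij commute by homogeneity}), and rewrite the higher-layer right-invariant fields through $Y_1^R,\dots,Y_m^R$. You then move one right-invariant derivative onto $L_{\overline{x}}u$ by (\ref{IPP left-right invariant vf}) and iterate. However, the step that carries the whole content of the theorem is either wrong or left unproved.

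\textbf{What fails.} In your ``key algebraic step'' you claim that $Y_I\Gamma_{\overline{x}}$ reduces to terms $Y^R_{j_1}\cdots Y^R_{j_k}(p\,\Gamma_{\overline{x}})$ with all $j$'s $\le m$ and $p$ constant. This is false already for $k=1$. A term $Y_j^R(r_{ij}\Gamma)$ with $\sigma_j\ge 2$ carries a coefficient of degree $\sigma_j-1\ge 1$. Once $Y_j^R$ is expanded, it also carries $\sigma_j$ right-invariant derivatives, not one. Concretely, take $\mathbb{H}^1$ with law $(x,y,t)*(x',y',t')=(x+x',y+y',t+t'+2(x'y-xy'))$. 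Then $Y_1=Y_1^R+y\,[Y_1^R,Y_2^R]$. For the fundamental solution $\Gamma=((x^2+y^2)^2+t^2)^{-1/2}$ of $Y_1^2+Y_2^2$, a direct computation shows $Y_1\Gamma\neq aY_1^R\Gamma+bY_2^R\Gamma$ for all constants $a,b$.

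Your inductive reformulation names the right target: $Y_iG=\sum_{l\le m}Y^R_lG_l$ with $G_l$ $(2-Q)$-homogeneous whenever $G$ is. But you only say it ``can be obtained from the expansion'' and flag the commutator terms as the delicate point, without resolving them.

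\textbf{The missing idea (the paper's construction).} Peel off only the outermost field. For $j>m$ write $Y_j^R=\theta_j^{h_1\dots h_{\sigma_j}}Y^R_{h_1}\cdots Y^R_{h_{\sigma_j}}$. Then $Y_iG=\sum_{l\le m}Y^R_l G^{i;l}$ with
\[
G^{i;l}=r_{il}\,G+\sum_{j=m+1}^N\theta_j^{l,h_2,\dots,h_{\sigma_j}}\,Y^R_{h_2}\cdots Y^R_{h_{\sigma_j}}(r_{ij}G).
\]
The remaining $\sigma_j-1$ right-invariant derivatives stay inside the kernel. The kernel is $(2-Q)$-homogeneous precisely because the degree $\sigma_j-1$ of $r_{ij}$ offsets those $\sigma_j-1$ derivatives. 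In the Heisenberg example this produces the non-scalar kernels $\Gamma+Y_2^R(y\Gamma)$ and $-Y_1^R(y\Gamma)$.

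\textbf{Effect on your estimate.} With the correct kernels, $\Gamma^{I;J}_{\overline{x}}$ involves up to $k(s-1)$ right-invariant derivatives ($s$ the step) of polynomial multiples of $\Gamma_{\overline{x}}$. To bound $Y_H$ of these on $\{\|x\|=1\}$ you must rewrite the right-invariant fields as left-invariant ones with polynomial coefficients. The resulting bound is by $\Theta_{\Gamma_{\overline{x}},p'}$ with $p'=p'(p,k,\mathbb{G})$, not by $\Theta_{\Gamma_{\overline{x}},p+k}$. This is harmless, since Theorem \ref{freezed fund sol} controls every order.
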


\vspace{2mm}

\begin{proof}
    For (\ref{rep formula Y_I u}) we follow the lines of \cite[Proposition 8.32]{librone}. Recall that
    \begin{equation*}
    Y_j u(x)=\int_{\mathbb{R}^N} (Y_j \Gamma_{\overline{x}})(y^{-1} * x) \, L_{\overline{x}}u(y) \, dy \hspace{5mm} \forall x \in \mathbb{R}^N, \, u \in C^{2,\alpha}_{0,Y}(\mathbb{R}^N), \, 1 \leq j \leq m.
\end{equation*}
    Since the right-invariant vector fields $\{Y_1^R, \, ..., \, Y_N^R\}$ are homogeneous of degree $\sigma_1, \, ..., \, \sigma_N$ (respectively) and form a basis of $\mathbb{R}^N$ at any point, then we can write
    \begin{equation*}
        Y_j = \sum_{i=1}^N r_{ij}(\cdot) \, Y_i^R,
    \end{equation*}
    where $r_{ij}$ is a $(\sigma_i-\sigma_j)$-homogeneous polynomial, for any $1 \leq i, \, j \leq N$. Moreover, exploiting the stratified structure of $\mathfrak{a}:=Lie(\mathbb{G})$ we have
    \begin{equation*}
        Y_i^R = \theta^{h_1,...,h_{\sigma_i}} Y_{h_1}^R \, ... \, Y_{h_{\sigma_i}}^R \quad \forall m+1 \leq i \leq N,
    \end{equation*}
    for suitable real constants $\{\theta^{h_1,...,h_{\sigma_i}}\}_{1 \leq h_1,...,h_{\sigma_i} \leq m}$. Since the degree of homogeneity of $r_{ij}$ is less that $\sigma_i$, by Proposition \ref{prop: translations and dilations} $r_{ij}$ and $Y_i^R$ commute; then
    \begin{align*}
        & Y_j u(x) = \int_{\mathbb{R}^N} \left[Y_i^R(r^{ij} \Gamma_{\overline{x}})\right](y^{-1} * x) \, L_{\overline{x}} u(y) \, dy + \\
        & + \int_{\mathbb{R}^N} \sum_{i=m+1}^N \theta^{h_1,...,h_{\sigma_i}} \left[ Y_{h_1}^R \, ... \, Y_{h_{\sigma_i}}^R(r^{ij} \Gamma_{\overline{x}}) \right](y^{-1} * x) \, L_{\overline{x}} u(y) \, dy.
    \end{align*}
    Loading one derivative on $L_{\overline{x}} u$ by means of Proposition \ref{prop: translations and dilations} and setting
    \begin{equation*}
        \Gamma_{\overline{x}}^{j; \, l}:=r_{lj} \Gamma_{\overline{x}} + \sum_{i=m+1}^N \theta^{l, \, h_2, \, ..., \, h_{\sigma_i}} Y_{h_2}^R \, ... \, Y_{h_{\sigma_i}}^R(r_{ij} \Gamma_{\overline{x}}),
    \end{equation*}
    we obtain
    \begin{equation}
    \label{new rep formula Y_j u (IPP)}
        Y_j u(x)=\int_{\mathbb{R}^N} \Gamma_{\overline{x}}^{j; \, l}(y^{-1} * x) \, Y_l L_{\overline{x}} u(y) \, dy.
    \end{equation}
    Iterating this argument we get (\ref{rep formula Y_I u}), with $\Gamma_{\overline{x}}^{i_1,...,i_k;\, j_1,...,j_k}$, $k \geq 1$ defined recursively as follows (here below we set $\Gamma_{\overline{x}}^{\emptyset;\emptyset} \equiv \Gamma_{\overline{x}}$ for homogeneity of notation):
    \begin{align}
        & \label{recursive formula Gamma^I;J} \Gamma_{\overline{x}}^{I; J} = r_{i_1 j_1} \Gamma_{\overline{x}}^{I \setminus \{i_1\}; J \setminus \{j_1\}} + \sum_{h=m+1}^N \, \theta^{j_1, \, l_2, \, ..., \, l_{\sigma_h}} Y_{l_2}^R \, ... \, Y_{l_{\sigma_h}}^R(r_{i_1 h} \Gamma_{\overline{x}}^{I \setminus \{i_1\}; J \setminus \{j_1\}}).
    \end{align}
    Now we prove the properties of $\Gamma_{\overline{x}}^{I;J}$ by induction on $k =|I| \geq 0$, the case $k=0$ being given by Theorem \ref{freezed fund sol}. Fixed $k \geq 1$, assume the thesis holds true for $k-1$ indices, then take $i_1,...,i_k, \, j_1,...,j_k$. The smoothness of $\Gamma_{\overline{z}}^{I;J}$ outside the origin is straightforward by (\ref{recursive formula Gamma^I;J}); thanks also to the $1$-homogeneity of each among $Y_{l_2}^R,...,Y_{l_{\sigma_h}}^R$ and the homogeneity properties of $\{r_{ij}\}_{i,j}$, we easily see that $\Gamma^{I;J}_{\overline{x}}$ is homogeneous of same degree as $\Gamma^{I\setminus \{i_1\}; J \setminus \{j_1\}}_{\overline{x}}$, which is $2-Q$ by inductive assumption.    
    As for the uniform estimates (\ref{uniform estimates for Gamma^J}), note that $r_{i_1 j_1}$ is homogeneous of degree $\sigma_{i_1}-\sigma_{j_1}=0$ (since $1 \leq i_1,j_1 \leq m$), hence it is constant; then, given any integer $p\geq 0$ and multindex $H$ with $|H|=p$, we have
    \begin{align*}
        & Y_{H} \Gamma_{\overline{x}}^{I;J} = r_{i_1 j_1} Y_{H} \Gamma_{\overline{x}}^{I \setminus \{i_1\}; J \setminus \{j_1\}}\\
        & +\sum_{h=m+1}^N \, \theta^{j_1,l_2...,l_{\sigma_h}} Y_{H} Y_{l_2}^R ... Y_{l_{\sigma_h}}^R(r_{i_1 h} \Gamma_{\overline{x}}^{I \setminus \{i_1\}; J \setminus \{j_1\}}).
    \end{align*}
    Denoting by $[q_{ij}]_{1 \leq i,j \leq N}$ the inverse matrix of $[r_{ij}]_{1 \leq i,j \leq N}$, we can write
    \begin{equation*}
        Y_i^R = \sum_{j=1}^N q_{ij}(\cdot) \, Y_j = \sum_{j=1}^m q_{ij} Y_j + \sum_{j=m+1}^N  \beta^{t_1...t_{\sigma_j}} q_{ij}(\cdot)Y_{t_1}...Y_{t_{\sigma_j}}
    \end{equation*}
    with $q_{ij}$ smooth and $(\sigma_j-\sigma_i)$-homogeneous, and $\{\beta^{t_1,...,t_{\sigma_j}}\}_{1 \leq t_1,...,t_{\sigma_j} \leq m}$ real constants. Combining the last two equalities, by Leibniz rule one can write
    \begin{align*}
        Y_{H} \Gamma_{\overline{x}}^{I;J}(x,t)=\sum_{L} \psi_{H,L}(x) \, Y_{L} \Gamma_{\overline{x}}^{I \setminus \{i_1\}; J \setminus \{j_1\}}(x,t).
    \end{align*}
    with $L$ ranging in a suitable set of multindices and each $\psi_{H,L}$ being the product of some among the functions $\{q_{ij}\}_{i,j}, \, \{r_{ij}\}_{i,j}$ and their derivatives of any order along $Y_1,...,Y_m$; in particular, $\{\psi_{H,L}\}_L$ are smooth functions. By homogeneity arguments $\psi_{H,L}$ is homogeneous of degree $(|H|-|L|)$, which must be nonnegative by smoothness of $\psi_{H,L}$: thus we can assert that $L$ ranges in $\{|L| \leq |H|\}$.
    Again by this smoothness, the suprema of $\{|\psi_{H,L}|\}_{|L| \leq |H| = p}$ on the unit sphere are (uniformly) bounded by a constant depending only on the structure of $\mathbb{G}$, so
    \begin{align*}
        & \sup_{|H| \leq p} \sup_{||x||=1} |Y_H \Gamma_{\overline{x}}^{I; \, J}(x)| \leq c(p) \sup_{|H| \leq |L| = p} \, \sup_{||x||=1} |\psi_{H,L}(x)| \times \\
        & \quad \times \sup_{|L| \leq p} \, \sup_{||x||=1} |Y_L \Gamma_{\overline{x}}^{I \setminus \{i_1\}; J \setminus \{j_1\}}(x)| \leq c(p,k,\mathbb{G},\Lambda)
    \end{align*}
        (the last inequality being due to our inductive assumption).
\end{proof}

\vspace{2mm}

\hspace{-7mm} It follows that, by analogy with the proof of Theorem \ref{local C^alpha estimate thm}, we can exploit the abstract result given by Theorem \ref{C^alpha estimate for T_f phi, 3.2} to get a local control on $(k+2)$-th order derivatives of $u$ in terms of $k$-th derivatives of $Lu$, for whatever $k \geq 1$. 

\vspace{2mm}

\vspace{2mm}

\begin{theorem}[Local estimates of higher order]
\label{local est high order der thm}
    For every $k \geq 1$, there exist constants $c_k=c(k,\mathbb{G}, \Lambda, \alpha)>0$ and $\overline{R}_k=\overline{R}(k,\mathbb{G}, \Lambda, \alpha, ||a||_k)>0$ such that for every $0 < R \leq \overline{R}_k$, $\overline{x} \in \mathbb{R}^N$ and $u \in C^{k+2,\alpha}_{Y}(B_R(\overline{x}))$ there holds
    \begin{equation*}
        ||D^{k+2}u||_{C^{\alpha}_Y(B_R(\overline{x}))} \leq c_k \left\{|D^k( Lu)|_{C^{\alpha}_Y(B_R(\overline{x}))} + ||u||_{C^{k+1,\alpha}_Y(B_R(\overline{x}))}\right\}.
    \end{equation*}
\end{theorem}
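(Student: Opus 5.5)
The plan mirrors the proof of Theorem \ref{local C^alpha estimate thm}. I read the statement with the same support assumption as there, namely $\mathrm{supp}(u)\subseteq B_R(\overline{x})$, so that the representation formulas apply to $u\in C^{k+2,\alpha}_{0,Y}(B_R(\overline{x}))$. Every derivative of order $k+2$ along the generators can be written as $Y_hY_lY_Iu$ with $1\le h,l\le m$ and $|I|=k$. So it suffices to bound $|Y_hY_lY_Iu|$ in $C^{\alpha}_Y(B_R(\overline{x}))$ for each such triple and then sum over the finitely many choices.

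\textbf{Step 1: representation and the singular integral estimate.} By Theorem \ref{rep formula higher derivs with Gamma^J},
\begin{equation*}
Y_Iu(x)=\sum_{j_1,\dots,j_k}\int_{\mathbb{R}^N}\Gamma^{I;\,j_1\dots j_k}_{\overline{x}}(y^{-1}*x)\,Y_{j_1}\cdots Y_{j_k}L_{\overline{x}}u(y)\,dy ,
\end{equation*}
hence $Y_hY_lY_Iu=\sum_J T_{\Gamma^{I;J}_{\overline{x}}}\big(Y_JL_{\overline{x}}u\big)$. Each density $\phi_J:=Y_JL_{\overline{x}}u$ has constant coefficients applied to $Y_JY_iY_ju$, so it lies in $C^{\alpha}_Y$ and is supported in $B_R(\overline{x})$, since $u\in C^{k+2,\alpha}_{0,Y}$. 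Each kernel $\Gamma^{I;J}_{\overline{x}}$ is smooth off the origin and $(2-Q)$-homogeneous, with $\Theta_{\Gamma^{I;J}_{\overline{x}},3}\le c(k,\mathbb{G},\Lambda)$ by (\ref{uniform estimates for Gamma^J}). Theorem \ref{C^alpha estimate for T_f phi, 3.2}, applied with $R\le 1$, then gives
\begin{equation*}
\|D^{k+2}u\|_{C^{\alpha}_Y(B_R(\overline{x}))}\le c(k,\mathbb{G},\Lambda,\alpha)\,|D^k(L_{\overline{x}}u)|_{C^{\alpha}_Y(B_R(\overline{x}))}.
\end{equation*}
The point is that this constant is uniform in $\overline{x}$, precisely because of the uniform kernel bounds.

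\textbf{Step 2: unfreezing the coefficients.} Write $Y_JL_{\overline{x}}u=Y_JLu+Y_J(L_{\overline{x}}-L)u$ and apply the second part of Lemma \ref{C^alpha estimate for L_x-L}:
\begin{equation*}
|Y_J(L_{\overline{x}}-L)u|_{C^{\alpha}_Y(B_R)}\le c(\alpha,\|a\|_k)\{R^{\alpha}|D^{k+2}u|_{C^{\alpha}_Y(B_R)}+\|u\|_{C^{k+1,\alpha}_Y(B_R)}\}.
\end{equation*}
Inserting this into Step 1 yields
\begin{equation*}
\|D^{k+2}u\|_{C^{\alpha}_Y}\le c_1|D^kLu|_{C^{\alpha}_Y}+c_2R^{\alpha}|D^{k+2}u|_{C^{\alpha}_Y}+c_2\|u\|_{C^{k+1,\alpha}_Y},
\end{equation*}
where $c_2$ depends on $\|a\|_k$. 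Choosing $\overline{R}_k=\min\{1,(2c_2)^{-1/\alpha}\}$ lets me absorb the middle term into the left-hand side, which proves the claim.

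\textbf{Expected obstacle.} The genuinely delicate analytic work is already contained in Theorems \ref{rep formula higher derivs with Gamma^J} and \ref{C^alpha estimate for T_f phi, 3.2}. Here I expect two points to need care. The first is justifying that $T_f$ may be applied with the outer derivatives $Y_hY_l$ acting on the integral representing $Y_Iu$. This is legitimate by (\ref{rep formula first derivatives (DCT)}) and (\ref{P.V. formula second derivatives}), since $\phi_J\in C^{\alpha}_{0,Y}$; if $C^{2,\alpha}$ regularity of $\phi_J$ were really needed there, I would invoke the approximation argument of \cite[Proposition 8.49]{librone}. The second is tracking constants. The coefficient of the lower-order term $\|u\|_{C^{k+1,\alpha}_Y}$ inherits a dependence on $\|a\|_k$ from Lemma \ref{C^alpha estimate for L_x-L}. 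So either $c_k$ must be allowed to depend on $\|a\|_k$, or that factor must be kept explicitly on the lower-order term; I would check which convention the global argument actually needs.
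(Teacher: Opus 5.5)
Your proposal is correct and follows the paper's proof essentially step by step: you write $Y_hY_lY_Iu=\sum_J T_{\Gamma^{I;J}_{\overline{x}}}(Y_JL_{\overline{x}}u)$ via Theorem \ref{rep formula higher derivs with Gamma^J}, apply Theorem \ref{C^alpha estimate for T_f phi, 3.2} with the uniform kernel bounds, unfreeze the coefficients with the second part of Lemma \ref{C^alpha estimate for L_x-L}, and absorb using $\overline{R}_k=\min\{1,(2c_2)^{-1/\alpha}\}$. Both of your caveats are also well taken: the paper's proof tacitly needs $\mathrm{supp}(u)\subseteq B_R(\overline{x})$ (it is only ever applied to $u\varphi_i$), and its final constant does depend on $\|a\|_k$, which is harmless for the global argument.
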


\vspace{2mm}

\begin{proof}
    Let us fix $I=(i_1,...,i_k), \, k \geq 1$. Applying Theorem \ref{C^alpha estimate for T_f phi, 3.2} we obtain
    \begin{equation*}
        ||T_{\Gamma_{\overline{x}}^{I;J}}(Y_J L_{\overline{x}}u)||_{C^{\alpha}_Y(B_R(\overline{x}))} \leq c\left(\mathbb{G}, \alpha, \Theta_{\Gamma_{\overline{x}}^{I;J},3}\right) |Y_J L_{\overline{x}}u|_{C^{\alpha}_Y(B_R(\overline{x}))}
    \end{equation*}
    for all $J=(j_1,...,j_k)$, where, recalling the definition of $T_f$ (see Theorem \ref{librone 6.32-6.33-8.24}): 
    \begin{align*}
        T_{\Gamma_{\overline{x}}^{I;J}}(Y_J L_{\overline{x}}u) = Y_h Y_l \int_{\mathbb{R}^N} \Gamma_{\overline{x}}^{I;J}(y^{-1}*x) \, Y_J L_{\overline{x}}u(y) \, dy.
    \end{align*}
    for arbitrary $1 \leq h,l \leq m$. Moreover, by Theorem \ref{rep formula higher derivs with Gamma^J}, it holds
    \begin{equation*}
        \Theta_{\Gamma_{\overline{x}}^{I;J}, \, 3} \leq c(\mathbb{G}, \Lambda,k),
    \end{equation*}
    whereas by means of Lemma \ref{C^alpha estimate for L_x-L} we have
    \begin{align*}
        & |Y_J L_{\overline{x}}u|_{C^{\alpha}_Y(B_R(\overline{x}))} \leq |Y_J Lu|_{C^{\alpha}_Y(B_R(\overline{x}))} + |Y_J (L_{\overline{x}}-L)u|_{C^{\alpha}_Y(B_R(\overline{x}))} \\
        & \leq |Y_J Lu|_{C^{\alpha}_Y(B_R(\overline{x}))} + c(\alpha, ||a||_k) \{ R^{\alpha} ||D^{k+2} u||_{C^{\alpha}_Y(B_R(\overline{x}))}+ ||u||_{C^{k+1,\alpha}_Y(B_R(\overline{x}))} \}
    \end{align*}
    Finally, thanks again to (\ref{rep formula Y_I u}), it is immediate that
    \begin{equation*}
        Y_h Y_l Y_{I} u(x) = \sum_{|J|=k} T_{\Gamma_{\overline{x}}^{I;J}}(Y_J L_{\overline{x}}u),
    \end{equation*}
    which yields
    \begin{align*}
        & ||Y_h Y_l Y_{I} u||_{C^{\alpha}_Y(B_R(\overline{x}))} \leq \sum_{|J|=k} ||T_{\Gamma_{\overline{x}}^{I;J}}(Y_J L_{\overline{x}}u)||_{C^{\alpha}_Y(B_R(\overline{x}))} \\ 
        & \leq c_1(k,\mathbb{G}, \Lambda, \alpha)  |Lu|_{C^{k,\alpha}_Y(B_R(\overline{x}))} + c_2(\mathbb{G}, \Lambda, \alpha, k, ||a||_k) \times  \\
        & \times \left\{ R^{\alpha}  ||D^{k+2} u||_{C^{\alpha}_Y(B_R(\overline{x}))}+ ||u||_{C^{k+1,\alpha}_Y(B_R(\overline{x}))} \right\}.
    \end{align*}
    Thus, summing over $h, \, l = 1, \, ..., \, m$ and over $I$ with $|I|=k$ we get
    \begin{align*}
        & ||D^{k+2} u||_{C^{\alpha}_Y(B_R(\overline{x}))} = \sum_{h,l=1}^m \sum_{|I|=k} ||Y_h Y_l Y_I u||_{C^{\alpha}_Y(B_R(\overline{x}))} \\
        & \leq c_{1,k}(\mathbb{G}, \Lambda, \alpha) |Lu|_{C^{k,\alpha}_Y(B_R(\overline{x}))} + c_{2,k}(\mathbb{G}, \Lambda, \alpha, ||a||_k) \times 
        \\
        & \times \left\{ R^{\alpha} ||D^{k+2} u||_{C^{\alpha}_Y(B_R(\overline{x}))} + ||u||_{C^{k+1,\alpha}_Y(B_R(\overline{x}))} \right\};
    \end{align*}
    so for all $0 < R \leq \overline{R}_k := \min\{1, \, (2c_{2,k})^{-1/\alpha}\}$ and $u \in C^{k+2,\alpha}_Y(B_R(\overline{x}))$ we have
    \begin{equation*}
        ||D^{k+2} u||_{C^{\alpha}_Y(B_R(\overline{x}))} \leq c(\mathbb{G}, \Lambda, \alpha, k, ||a||_k) \{ |Lu|_{C^{k,\alpha}_Y(B_R(\overline{x}))} + ||u||_{C^{k+1,\alpha}_Y(B_R(\overline{x}))} \}.
    \end{equation*}
\end{proof}

\vspace{2mm}

We now turn to the proof of Theorem \ref{global est higher derivs thm}, which we split in two parts: first we prove estimate (\ref{est high order derivs ineq}) assuming a priori that $u \in C^{k+2,\alpha}_Y(\mathbb{R}^N)$; later we show that if $u$ is just in $C^{2,\alpha}_Y(\mathbb{R}^N)$ but $Lu \in C^{k,\alpha}_Y(\mathbb{R}^N)$, then $u \in C^{k+2,\alpha}_Y(\mathbb{R}^N)$.

\vspace{2mm}

\begin{theorem}[Global a priori estimates]
\label{first part thm 1.6: estimates}
    Under the assumptions of Theorem \ref{global est higher derivs thm}, for any $k \geq 0$ there exists a constant $c_k=c(k,\mathbb{G}, \Lambda, \alpha, ||a||_k)>0$ such that for every $u \in C^{k+2,\alpha}_Y(\mathbb{R}^N)$ the following estimate holds:
    \begin{equation}
        \label{est higher derivs ineq} ||u||_{C^{k+2,\alpha}_Y(\mathbb{R}^N)} \leq c_k \left\{ ||Lu||_{C^{k,\alpha}_Y(\mathbb{R}^N)} + ||u||_{L^{\infty}(\mathbb{R}^N)} \right\}
    \end{equation}
\end{theorem}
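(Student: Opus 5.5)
The plan is induction on $k\geq 0$. The case $k=0$ is exactly Theorem \ref{1.4: Schauder est Carnot thm}. For $k\geq 1$, assume (\ref{est higher derivs ineq}) holds at order $k-1$; this is legitimate since $u\in C^{k+2,\alpha}_Y\subset C^{k+1,\alpha}_Y$ and $\|a\|_{k-1}\leq \|a\|_k$. The inductive hypothesis gives
\begin{equation*}
\|u\|_{C^{k+1,\alpha}_Y(\mathbb{R}^N)}\leq c_{k-1}\{\|Lu\|_{C^{k-1,\alpha}_Y(\mathbb{R}^N)}+\|u\|_{L^\infty(\mathbb{R}^N)}\}\leq c_{k-1}\{\|Lu\|_{C^{k,\alpha}_Y(\mathbb{R}^N)}+\|u\|_{L^\infty(\mathbb{R}^N)}\}.
\end{equation*}
So it only remains to control $\|D^{k+2}u\|_{C^\alpha_Y(\mathbb{R}^N)}$. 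Unlike the case $k=0$, no interpolation inequality is needed: the lower-order term $\|u\|_{C^{k+1,\alpha}_Y}$ produced by the local estimate is already bounded by the inductive step.

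For the top-order derivatives I localize as in the proof of Theorem \ref{1.4: Schauder est Carnot thm}. Fix $R$ with $2R\leq \overline{R}_k$. Take the cutoff $\phi\equiv 1$ on $B_R(0)$, supported in $B_{2R}(0)$, and set $\varphi_i=\phi(\overline{x_i}^{-1}*\cdot)$ for a covering $\{B_R(\overline{x_i})\}_i$. Then $u\varphi_i\in C^{k+2,\alpha}_Y$ is supported in $B_{2R}(\overline{x_i})$ and coincides with $u$ on $B_R(\overline{x_i})$. Applying Theorem \ref{local est high order der thm} to $u\varphi_i$ on $B_{2R}(\overline{x_i})$ gives
\begin{equation*}
\|D^{k+2}u\|_{C^\alpha_Y(B_R(\overline{x_i}))}\leq c_k\{|D^k L(u\varphi_i)|_{C^\alpha_Y(B_{2R}(\overline{x_i}))}+\|u\varphi_i\|_{C^{k+1,\alpha}_Y(B_{2R}(\overline{x_i}))}\}.
\end{equation*}
Here I read the local theorem as requiring compact support in the ball, since that is how its proof uses Lemma \ref{C^alpha estimate for L_x-L} and the representation formula (\ref{rep formula Y_I u}).

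Next I expand $L(u\varphi_i)=\varphi_i Lu+u L\varphi_i+2a^{hl}Y_hu\,Y_l\varphi_i$ and bound each product with (\ref{Hölder norm of a product}) in $C^{k,\alpha}_Y$. By left invariance, $\|\varphi_i\|_{C^{k+2,\alpha}_Y}=\|\phi\|_{C^{k+2,\alpha}_Y}$, independent of $i$. The three terms are then controlled by $\|Lu\|_{C^{k,\alpha}_Y}$, by $c(\|a\|_k)\|u\|_{C^{k,\alpha}_Y}$, and by $c(\|a\|_k)\|u\|_{C^{k+1,\alpha}_Y}$ respectively. Similarly, $\|u\varphi_i\|_{C^{k+1,\alpha}_Y}\leq c\|u\|_{C^{k+1,\alpha}_Y}$. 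All constants are independent of $i$, so taking the supremum over $i$ yields
\begin{equation*}
\|D^{k+2}u\|_{C^\alpha_Y(\mathbb{R}^N)}\leq c\{\|Lu\|_{C^{k,\alpha}_Y(\mathbb{R}^N)}+\|u\|_{C^{k+1,\alpha}_Y(\mathbb{R}^N)}\}.
\end{equation*}
Combining this with the inductive bound on $\|u\|_{C^{k+1,\alpha}_Y}$ gives (\ref{est higher derivs ineq}).

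The main technical point is passing from the local seminorms on the balls $B_R(\overline{x_i})$ to a global Hölder seminorm. Points $x,x'$ lying in a common ball are handled directly. If $d(x,x')\geq R/(c\kappa)$, which covers every pair not in a common ball (choosing the covering so that each point lies, with a fixed quasi-triangle margin, well inside some $B_R(\overline{x_i})$), the difference quotient is bounded by $2R^{-\alpha}\|D^{k+2}u\|_{L^\infty}$, and the sup norm is covered by the local bounds. This is the same implicit step as in the proof of Theorem \ref{1.4: Schauder est Carnot thm}, where it was done tacitly.
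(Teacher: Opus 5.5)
Your proposal is correct and follows the paper's proof essentially step by step. The paper likewise argues by induction on $k$, with the global Schauder estimate as base case. It localizes with the translated cutoffs $\varphi_i$ and applies the local higher-order estimate to $u\varphi_i$. It expands $L(u\varphi_i)$ and uses the product estimate and left invariance to get constants uniform in $i$. Finally, it uses the inductive hypothesis to absorb $\|u\|_{C^{k+1,\alpha}_Y(\mathbb{R}^N)}$. Your reading of the local theorem as requiring support in the ball is right, and your explicit patching of the local H\"older seminorms into a global one (a covering with a quasi-triangle margin, plus the trivial bound for far-apart pairs) fills in steps the paper leaves tacit.
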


\vspace{2mm}

\begin{proof}
We prove the statement by induction on $k \geq 0$, the case $k=0$ being given by Theorem \ref{1.4: Schauder est Carnot thm}. Fixed $k \geq 1$, assume that there holds
\begin{equation*}
    ||v||_{C^{k+1,\alpha}_Y(\mathbb{R}^N)} \leq c_{k-1} \left\{ ||Lv||_{C^{k-1,\alpha}_Y(\mathbb{R}^N)} + ||v||_{L^{\infty}(\mathbb{R}^N)} \right\} \quad \forall v \in C^{k+1,\alpha}_Y(\mathbb{R}^N).
\end{equation*}
Then, for any $u \in C^{k+2,\alpha}_Y(\mathbb{R}^N)$, we can split
    \begin{align}
        & \label{split of the k+2 Hölder norm}||u||_{C^{k+2,\alpha}_Y(\mathbb{R}^N)} = ||D^{k+2} u||_{C^{\alpha}_Y(\mathbb{R}^N)} + ||u||_{C^{k+1,\alpha}_Y(\mathbb{R}^N)}
    \end{align}
    and focus just on the former term at right-hand side, since the latter is already handled by inductive assumption. Let $0<R<\overline{R_k}/2$ with $\overline{R_k}$ as in the statement of Theorem \ref{local est high order der thm}, then let $\phi \in C^{\infty}_{0}(\mathbb{R}^N; [0,1])$ s.t. $\phi \equiv 1 \:$ in $B_R(0)$ and $\phi \equiv 0 \:$ outside $B_{2R}(0)$.
    Let also $\{B_R(\overline{x_i})\}_{i \in \mathbb{N}}$ be a covering of $\mathbb{R}^N$ and $\varphi_i=\phi(\overline{x_i}^{-1} * \cdot)$ for all $i \geq 1$. Applying Theorem \ref{local est high order der thm} to $u \varphi_i$ we get 
    \begin{align}
        & ||D^{k+2} u||_{C^{\alpha}_Y(B_R(\overline{x_i}))} \leq  ||D^{k+2}(u \varphi_i)||_{C^{\alpha}_Y(B_{2R}(\overline{x_i}))} \leq c \, \{||D^{k} (L(u \varphi_i))||_{C^{\alpha}_Y(B_{2R}(\overline{x_i}))} \nonumber\\
        & \label{control for C^k-1 norm u} + ||u \varphi_i||_{C^{k+1,\alpha}_Y(B_{2R}(\overline{x_i}))} \} \leq c \left\{||L(u \varphi_i)||_{C^{k,\alpha}_Y(B_{2R}(\overline{x_i}))} + ||u \varphi_i||_{C^{k+1,\alpha}_Y(B_{2R}(\overline{x_i}))} \right\}.
    \end{align}
    To shorten the notation, we denote $B_i:=B_{2R}(\overline{x_i})$ and $B_0:=B_{2R}(0)$. We have 
    \begin{align*}
        & L(u \varphi_i) = Lu \cdot \varphi_i + u \cdot L \varphi_i + 2 a^{hl} Y_h u \cdot Y_l \varphi_i
    \end{align*}
    and $Y_I \varphi_i=(Y_i \phi)(\overline{x_i}^{-1} * \cdot)$ for all $I$ by left-invariance of $\{Y_1,...,Y_m\}$, so
    \begin{align*}
        & ||\varphi_i||_{C^{h,\alpha}_Y(B_i)}=||\phi||_{C^{h,\alpha}_Y(B_0)} \quad \forall h \geq 0;\\  
        & ||L \varphi_i||_{C^{k,\alpha}_Y(B_i)} \leq ||a^{hl}||_{C^{k,\alpha}_Y(B_i)} ||Y_h Y_l \varphi_i||_{C^{k,\alpha}_Y(B_i)} \leq c(||a||_k) ||\phi||_{C^{k+2,\alpha}_Y(B_0)},
    \end{align*}
    having used also (\ref{Hölder norm of a product}). Plugging everything in (\ref{control for C^k-1 norm u})
    \begin{align*}
        & ||D^{k+2} u||_{C^{\alpha}_Y(B_R(\overline{x_i}))} \leq ||Lu||_{C^{k,\alpha}_Y(B_i)} ||\varphi_i||_{C^{k,\alpha}_Y(B_i)} + ||u||_{C^{k,\alpha}_Y(B_i)} ||L \varphi_i||_{C^{k,\alpha}_Y(B_i)} \\
        & + 2||a^{hl}||_{C^{k,\alpha}_Y(B_i)} ||Y_h u||_{C^{k,\alpha}_Y(B_i)} ||Y_l \varphi_i||_{C^{k,\alpha}_Y(B_i)} \\
        & \leq c \, ||\phi||_{C^{k+2,\alpha}_Y(\mathbb{R}^{N})} \left\{ ||Lu||_{C^{k,\alpha}_Y(\mathbb{R}^{N})} + ||u||_{C^{k+1,\alpha}_Y(\mathbb{R}^{N})} \right\}
    \end{align*}
    where by construction $c$ may depend on $k,\, \mathbb{G},\,  \Lambda, \, \alpha, \, ||a||_k$, but not on $i$. Since $\{B_R(\overline{x_i})\}_{i \in \mathbb{N}}$ is a covering of $\mathbb{R}^N$, taking the supremum on $i \geq 1$ we get
    \begin{equation*}
        ||D^{k+2} u||_{C^{\alpha}_Y(\mathbb{R}^N)} \leq c(k,\mathbb{G}, \Lambda, \alpha, ||a||_k) \left\{ ||Lu||_{C^{k,\alpha}_Y(\mathbb{R}^N)} + ||u||_{L^{\infty}(\mathbb{R}^N)} \right\}
    \end{equation*}
    and the thesis follows by (\ref{split of the k+2 Hölder norm}) and our inductive assumption.
\end{proof}

\vspace{2mm}

Finally, we prove the following:

\vspace{2mm}

\begin{theorem}[Global regularity result]
    \label{global regularity thm}
    Under the assumptions of Theorem \ref{1.4: Schauder est Carnot thm}, if $u \in C^{2,\alpha}_Y(\mathbb{R}^N)$ is such that $Lu \in C^{k,\alpha}_Y(\mathbb{R}^N), \, k \geq 1$, then $u \in C^{k+2,\alpha}_Y(\mathbb{R}^N)$.
\end{theorem}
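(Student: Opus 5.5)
We argue by induction on $k\ge 1$, using the representation formulas of Theorem \ref{rep formula higher derivs with Gamma^J} (in the localized form of Theorem \ref{C^alpha estimate for T_f phi, 3.2}) to gain one derivative at a time. Since the a priori estimate of Theorem \ref{first part thm 1.6: estimates} assumes $u\in C^{k+2,\alpha}_Y$, the real content is qualitative regularity. The natural tool is a \emph{difference quotient/mollification} argument adapted to the group. We do not mollify $u$ directly, because $L$ has nonsmooth coefficients. Instead we localize with the cutoffs $\varphi_i$ used in the proof of Theorem \ref{first part thm 1.6: estimates} and work with $v=u\varphi_i$, which has support in a ball $B_{2R}(\overline{x_i})$ of radius small enough for the local theory to apply.

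\textbf{Base step ($k=1$).} The key identity is the freezing argument of Theorem \ref{local C^alpha estimate thm}, written as a fixed-point equation:
\begin{equation*}
    L_{\overline{x}} v = Lv + (L_{\overline{x}}-L)v .
\end{equation*}
Here $Lv=\varphi_i Lu + u L\varphi_i + 2a^{hl}Y_h u\, Y_l\varphi_i$, which lies in $C^{1,\alpha}_Y$ as soon as $u\in C^{2,\alpha}_Y$ and $a_{ij}\in C^{1,\alpha}_Y$. So it suffices to show that compactly supported $v\in C^{2,\alpha}_Y$ with $Lv\in C^{1,\alpha}_Y$ belongs to $C^{3,\alpha}_Y$.

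\emph{Approximation.} We approximate $v$ by group mollifications $v_\varepsilon=v*\rho_\varepsilon$, convolving on the right so that the left-invariant $Y_i$ commute with the mollification. Then $v_\varepsilon\in C^\infty_0\subset C^{3,\alpha}_Y$, and we compute
\begin{equation*}
    Lv_\varepsilon=(Lv)*\rho_\varepsilon + \bigl[a^{ij}\,(Y_iY_jv)*\rho_\varepsilon-(a^{ij}Y_iY_jv)*\rho_\varepsilon\bigr].
\end{equation*}
The first term is bounded in $C^{1,\alpha}_Y$ uniformly in $\varepsilon$. The second term is a commutator between multiplication by $a^{ij}$ and mollification.

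\emph{Uniform bounds.} Applying the local higher-order estimate of Theorem \ref{local est high order der thm} with $k=1$ to $v_\varepsilon$ (supported in a slightly larger small ball) gives
\begin{equation*}
    \|D^3 v_\varepsilon\|_{C^\alpha_Y}\le c\bigl\{|D(Lv_\varepsilon)|_{C^\alpha_Y}+\|v_\varepsilon\|_{C^{2,\alpha}_Y}\bigr\}.
\end{equation*}
We then need a uniform $C^{1,\alpha}_Y$ bound on the commutator term. Its $Y_h$-derivative involves $Y_h a^{ij}\,(Y_iY_jv)_\varepsilon$, which is harmless, and a genuinely dangerous piece $a^{ij}Y_h(Y_iY_jv)_\varepsilon-(a^{ij}Y_hY_iY_jv)_\varepsilon$, which contains third derivatives of $v$. \textbf{This is the main obstacle.} To handle it, I would first try to bound that piece in $C^{\alpha}_Y$ by $c\,\varepsilon^{\alpha'}\|D^3v_\varepsilon\|$-type quantities plus lower-order terms, using the Hölder continuity of $a$ on the scale $\varepsilon$. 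The difficulty is that $Y_h$ does not commute with right convolution in the nonabelian setting; only left translations commute.

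\emph{Alternative route.} To avoid this commutation problem, I would rather work at the level of representation formulas: $v=\Gamma_{\overline x}$-potential of $L_{\overline x}v$, with $L_{\overline x}v=Lv+(L_{\overline x}-L)v$. Differentiating once via Theorem \ref{rep formula higher derivs with Gamma^J} (formula (\ref{new rep formula Y_j u (IPP)})) expresses $Y_jv$ through $Y_l L_{\overline x}v$. The term $(L_{\overline x}-L)v$ has small $C^{\alpha}$ norm $cR^\alpha$ times $D^2v$ (Lemma \ref{C^alpha estimate for L_x-L}). I would then set up a Neumann series in the space $C^{3,\alpha}_Y$ of functions supported in $B_R$: the map $w\mapsto T(Lv - (L-L_{\overline x})w)$ is a contraction for $R$ small, by Theorems \ref{C^alpha estimate for T_f phi, 3.2} and \ref{local est high order der thm}. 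Its fixed point lies in $C^{3,\alpha}_Y$, and it coincides with $v$ by uniqueness of the corresponding fixed point in $C^{2,\alpha}_Y$, where the same map is also a contraction by Theorem \ref{local C^alpha estimate thm}.

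\textbf{Conclusion and induction.} Gluing over the covering $\{B_R(\overline{x_i})\}$ with bounds uniform in $i$ (by left invariance, as in Theorem \ref{first part thm 1.6: estimates}) gives $u\in C^{3,\alpha}_Y(\mathbb{R}^N)$. For the inductive step we assume $u\in C^{k+1,\alpha}_Y$ and run the same argument at level $k$, with Theorem \ref{local est high order der thm} at order $k$ and the second part of Lemma \ref{C^alpha estimate for L_x-L}. Here the lower-order term $\|u\|_{C^{k+1,\alpha}_Y}$ is already finite, which yields $u\in C^{k+2,\alpha}_Y$.
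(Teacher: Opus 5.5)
Your outer architecture is sound: localize with $\varphi_i$, show each $u\varphi_i\in C^{3,\alpha}_Y$, then get bounds uniform in $i$ from the a priori estimates. The difference from the paper is where the qualitative regularity comes from. The paper does not prove it. It quotes the local regularity theorem of Bramanti--Brandolini (stated right after this theorem) to get $u\in C^{k+2,\alpha}_{Y,loc}(\mathbb{R}^N)$. It then applies the global a priori estimate of Theorem~\ref{first part thm 1.6: estimates} to $u\phi_R$, using the radial cutoffs of Lemma~\ref{cutoff function lemma} whose $Y$-derivatives are bounded uniformly in $R\ge 1$. The lower-order norm is controlled by Theorem~\ref{1.4: Schauder est Carnot thm} or the inductive hypothesis, and one lets $R\to\infty$. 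You instead try to prove local regularity from scratch, and the step meant to do it, the contraction, does not work as stated.

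\textbf{First problem: the map does not act on your space.} The map $w\mapsto \Gamma_{\overline x}*\bigl(Lv+(L_{\overline x}-L)w\bigr)$ does not preserve functions supported in $B_R(\overline x)$: a $\Gamma_{\overline x}$-potential of a compactly supported function is in general not compactly supported. Yet every estimate you invoke needs the input supported in $B_R(\overline x)$ and controls the output only on $B_R(\overline x)$. In particular, the factor $R^\alpha$ in Lemma~\ref{C^alpha estimate for L_x-L} comes from $\|Y_hY_lw\|_{L^\infty(B_R)}\le cR^\alpha|Y_hY_lw|_{C^\alpha_Y(B_R)}$. That bound needs $Y_hY_lw$ to vanish somewhere in the ball, and this is lost after one iteration. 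The same objection applies to your uniqueness step at the $C^{2,\alpha}_Y$ level.

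\textbf{Second problem: the cited results are a priori estimates, not contraction estimates.} Third derivatives of $\Gamma_{\overline x}*\bigl((L_{\overline x}-L)z\bigr)$ are singular integrals of $Y_j(L_{\overline x}-L)z=(a^{hl}(\overline x)-a^{hl})Y_jY_hY_lz-(Y_ja^{hl})Y_hY_lz$. The last term enters Lemma~\ref{C^alpha estimate for L_x-L} and Theorem~\ref{local est high order der thm} as $c(\|a\|_1)\|z\|_{C^{2,\alpha}_Y}$, with no small factor. So even granting supports, no contraction follows from what you cite.

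\textbf{How to repair it.}
\begin{itemize}
\item Take $\mathrm{supp}\,v\subset B_{R/2}(\overline x)$ and iterate $w\mapsto\eta\,\Gamma_{\overline x}*\bigl(Lv+(L_{\overline x}-L)w\bigr)$. Here $\eta$ is supported in $B_{3R/4}(\overline x)$, $\eta\equiv 1$ on $B_{R/2}(\overline x)$, and $|Y_I\eta|\le cR^{-|I|}$; this map still fixes $v$.
\item Check by scaling that the terms where derivatives fall on $\eta$ are $O(R^\alpha)$.
\item Absorb the $(Y_ja^{hl})Y_hY_lz$ term using the Poincar\'e-type bound $\|z\|_{C^{2,\alpha}_Y}\le cR\,|D^3z|_{C^\alpha_Y}$ for $z$ supported in $B_R$, obtained by iterating Lemma~\ref{UGE Lemma 7.6} from a point outside the support. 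A weighted norm $|D^3w|_{C^\alpha_Y}+K\|w\|_{C^{2,\alpha}_Y}$ also works.
\item Prove that $\Gamma_{\overline x}*$ maps $C^{1,\alpha}_{0,Y}$ into $C^{3,\alpha}_{Y,loc}$. This means rerunning the integration by parts of Theorem~\ref{rep formula higher derivs with Gamma^J} for a general density, not for $L_{\overline x}u$ with $u$ already regular.
\end{itemize}
With these additions your route gives a self-contained proof of local regularity in the group setting. As written, the key step is unproved.

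Separately, in your mollification paragraph the direction is reversed: it is $\rho_\varepsilon*v$, not $v*\rho_\varepsilon$, that commutes with the left-invariant $Y_i$.
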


\vspace{2mm}

\hspace{-7mm} The proof is strongly based on the following deep theorem (see \cite[Theorem 12.2.b]{librone}), which gives the local version of the same result:

\vspace{2mm}

\begin{theorem}
\label{local Hölder regularity thm}
    Let $Y_1, \, ..., \, Y_m$ be a system of Hörmander vector fields in a bounded domain $\Omega \subset \mathbb{R}^N$, let $\{a_{ij}\}_{1 \leq i,j \leq m}$ satisfy (H3)
    for some constant $\Lambda>0$ and $L$ be as in (\ref{our operator L_Y Carnot}). Then for every domain $\Omega' \Subset \Omega$, nonnegative integer $k$, $\alpha \in (0,1),$ if $ a_{ij} \in C^{k,\alpha}_Y(\mathbb{R}^N)$, for any $u \in C^{2,\alpha}_Y(\Omega)$ such that $Lu \in C^{k,\alpha}_Y(\Omega)$ it holds $u \in C^{k+2,\alpha}_Y(\Omega').$
\end{theorem}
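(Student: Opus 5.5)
The plan is to argue by induction on $k$. The case $k=0$ is trivial, since $u\in C^{2,\alpha}_Y(\Omega)$ by assumption. For the inductive step, assume the claim holds for $k-1$ on every pair of nested domains. Fix $\Omega'\Subset\Omega''\Subset\Omega$; by the inductive hypothesis $u\in C^{k+1,\alpha}_Y(\Omega'')$. Since no group structure is available on a general bounded domain, I would first lift locally via Rothschild--Stein \cite{R.S. lifting}. On a neighbourhood of each point of $\overline{\Omega'}$, the fields $Y_i$ lift to free fields $\widetilde Y_i$ on $\mathbb{R}^{N+p}$. These are approximated, at every frozen point $\overline{x}$, by the generators of a fixed free Carnot group through the map $\Theta_{\overline{x}}(y)\approx y^{-1}*x$. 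The lifted function $\widetilde u(x,\xi)=u(x)$ satisfies $\widetilde L\widetilde u=\widetilde{Lu}$, and the relations between original and lifted Hölder spaces from \cite{Hölder spaces: original vs lifted} make it enough to prove the regularity upstairs. A compactness argument then covers $\overline{\Omega'}$ with finitely many small balls.

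On each small ball $B_R(\overline{x})$ I would take a cutoff $\varphi$ and write the parametrix representation
\[
\varphi\widetilde u(x)=\int \Gamma_{\overline{x}}(\Theta_{\overline{x}}(y))\,L_{\overline{x}}(\varphi\widetilde u)(y)\,dy+\int R_{\overline{x}}(x,y)\,(\varphi \widetilde u)(y)\,dy .
\]
Here $R_{\overline{x}}$ is a kernel of positive degree, so it is a smoothing remainder; its existence is the key output of the Rothschild--Stein approximation. I would then mimic the proof of Theorem \ref{rep formula higher derivs with Gamma^J}: express each $\widetilde Y_i$ in terms of the ``right-invariant-type'' operators $\widetilde Y_i^{R}$ acting in the $y$ variable, up to remainders of lower degree. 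Using transposition, as in (\ref{IPP left-right invariant vf}), this moves one derivative off the kernel and onto $L_{\overline{x}}(\varphi\widetilde u)$. Writing $L_{\overline{x}}=L+(L_{\overline{x}}-L)$ and iterating $k$ times gives a formula for $\widetilde Y_I(\varphi\widetilde u)$, $|I|=k$, with three kinds of terms:
\begin{itemize}
\item kernels of type $2$ (that is, kernels like $\Gamma_{\overline{x}}$, of homogeneous degree $2-Q$) applied to $\widetilde Y_J(\varphi\,\widetilde{Lu})$;
\item kernels of type $2$ applied to $\widetilde Y_J\big((L_{\overline{x}}-L)(\varphi \widetilde u)\big)$;
\item kernels of higher type applied to derivatives of $\widetilde u$ of order at most $k+1$.
\end{itemize}
Applying two more derivatives, the analogue of Theorem \ref{C^alpha estimate for T_f phi, 3.2} (with variable kernels, as in \cite{Hölder spaces: original vs lifted}) puts the first group of terms in $C^\alpha$, since $Lu\in C^{k,\alpha}$. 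The third group is in $C^\alpha$ by the inductive hypothesis.

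The main obstacle is the middle group. Its natural estimate involves $D^{k+2}u$, which we do not yet know to exist, so the absorption trick of Theorem \ref{local est high order der thm} cannot be applied directly. To get around this, I would not differentiate $u$ to order $k+2$ at all. Instead I would read the representation formula as a fixed-point identity for $v:=\widetilde Y_I(\varphi\widetilde u)$, which lies in $C^{1,\alpha}$ by induction. The term $\widetilde Y_J\big((L_{\overline{x}}-L)(\varphi\widetilde u)\big)$ is rewritten as $(a^{hl}(\overline{x})-a^{hl})\widetilde Y_h\widetilde Y_l v$ plus commutator terms of order at most $k+1$ in $u$. This gives $v=\mathcal{K}_R v+g$, where $g\in C^{2,\alpha}$ by the known regularity of $Lu$ and of the lower-order derivatives. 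By Lemma \ref{C^alpha estimate for L_x-L} and Theorem \ref{C^alpha estimate for T_f phi, 3.2}, the operator $\mathcal{K}_R$ has norm $O(R^\alpha)$ as a map on $C^{2,\alpha}_0(B_R)$, so it is a contraction there. The same identity also holds on a larger space (for instance $C^{1,\alpha}$, or a Sobolev space where the Rothschild--Stein $L^p$ theory applies) on which the solution is unique. Uniqueness then forces $v$ to coincide with the $C^{2,\alpha}$ fixed point, which yields $v\in C^{2,\alpha}_Y$.

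Finally, the commutators $[\widetilde Y_i,\widetilde Y_h]\widetilde Y_l$ have weight three, so they are handled exactly by the degree bookkeeping of the $Y^R$ expansion and cost no extra regularity. Projecting back to $\Omega'$ gives $u\in C^{k+2,\alpha}_Y(\Omega')$, which completes the inductive step.
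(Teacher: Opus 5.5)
The paper gives no proof of this statement; it quotes it from \cite[Theorem 12.2.b]{librone}, so your sketch has to stand on its own. The overall plan is reasonable: induction, local Rothschild--Stein lifting, a higher-order representation formula as in Theorem~\ref{rep formula higher derivs with Gamma^J}, and identifying the unknown derivatives with the fixed point of a small perturbation. However, the step that is supposed to produce the extra derivative does not work as written.

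First, it is not true that $\widetilde Y_J((L_{\overline{x}}-L)(\varphi\widetilde u))$ equals $(a^{hl}(\overline{x})-a^{hl})\widetilde Y_h\widetilde Y_l v$ plus terms of order at most $k+1$. With $w=\varphi\widetilde u$, the Leibniz rule gives $(a^{hl}(\overline{x})-a^{hl})\widetilde Y_J\widetilde Y_h\widetilde Y_l w$. The difference $\widetilde Y_J\widetilde Y_h\widetilde Y_l w-\widetilde Y_h\widetilde Y_l\widetilde Y_J w$ consists of terms like $\widetilde Y_{J_1}[\widetilde Y_j,\widetilde Y_h]\widetilde Y_{J_2}\widetilde Y_l w$. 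Such a term has only $k+1$ vector fields, but one of them is a commutator. A commutator is not in the span of $\widetilde Y_1,\dots,\widetilde Y_m$, so it can only be controlled as $\widetilde Y_j\widetilde Y_h-\widetilde Y_h\widetilde Y_j$, i.e.\ through derivatives of length $k+2$, which are exactly the unknown ones. Placing these terms in $g$ and claiming $g\in C^{2,\alpha}$ by induction is therefore circular.

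Your closing remark that these terms ``have weight three'' confirms the problem rather than resolving it. The $Y^R$ bookkeeping only shifts derivatives between kernel and density; it never lowers the order falling on the density. Moreover, the formula for $\widetilde Y_I w$ involves $\widetilde Y_J L_{\overline{x}}w$ for every $|J|=k$, so it does not close in the single unknown $v$. The repair is to take the whole vector $V=(\widetilde Y_{I'}w)_{|I'|=k}$ as unknown and write $\widetilde Y_J\widetilde Y_h\widetilde Y_l w=\widetilde Y_{j_1}\widetilde Y_{j_2}V_{(j_3,\dots,j_k,h,l)}$, or $\widetilde Y_{j_1}\widetilde Y_h V_{(l)}$ if $k=1$. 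Then no commutator ever appears, and every top-order term carries the small factor $a(\overline{x})-a$.

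Second, the uniqueness step is asserted, not proved, and it is the only place where regularity (rather than an a priori bound) is actually obtained. Since $V$ is only $C^{1,\alpha}$, $\mathcal K_R V$ contains $\widetilde Y_a\widetilde Y_b V$, which is not a function. Also, a representation formula like Theorem~\ref{rep formula higher derivs with Gamma^J} is derived for $C^{k+2,\alpha}_0$ functions, so it presupposes what you want to prove. You need to do three things:
(i) give the identity a meaning and prove it for $w\in C^{k+1,\alpha}$. One way is to perform only $k-1$ transfers of derivatives, so that $\mathcal K_R$ acts through kernels of type $1$ on $(a(\overline{x})-a)\widetilde Y V$.
(ii) show that this one operator is a contraction both on $C^{1,\alpha}$ and on $C^{2,\alpha}$. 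On $C^{1,\alpha}$ this needs $C^{\alpha}$ bounds, with cancellation, for the resulting type-$0$ kernels. On $C^{2,\alpha}$ it needs one more transfer, which generates terms $(\widetilde Y a)\,\widetilde Y V$; these are harmless because $k\ge 1$.
(iii) make it an actual self-map. $\mathcal K_R$ does not preserve support in $B_R$, while Lemma~\ref{C^alpha estimate for L_x-L} and Theorem~\ref{C^alpha estimate for T_f phi, 3.2} are stated for functions supported in $B_R$, and that is where the factor $R^{\alpha}$ comes from. So you need a cutoff and $R$-scaled norms.

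With these supplied, your argument would be a genuine Neumann-series/uniqueness proof of the cited theorem. As written, its decisive step is missing.
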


\vspace{2mm}

\hspace{-7mm} For the sake of completeness, we point out that the cited theorem is much more general than what we actually need, since no underlying algebraic structure is required for the system $\{X_i\}_i$; moreover, the original theorem in \cite{librone} also gives a local estimate of $||u||_{C^{k+2,\alpha}_X}$ in terms of $||Lu||_{C^{k,\alpha}_Y}$ and $||u||_{L^{\infty}}$, which however we can not exploit since the dependence of the involved constant on the subdomain $\Omega'$ makes it unsuitable for our purposes. Instead, we will rely on the global estimate (\ref{est higher derivs ineq}) obtained above.

To prove Theorem \ref{global regularity thm}, we need also the following result about radial cutoff functions:

\vspace{2mm}

\begin{lemma}
\label{cutoff function lemma}
    For any $R \geq 1$ and any positive integer $h$, there exists a radial function $\phi_R \in C^{\infty}_{0}(\mathbb{R}^N; [0,1])$ such that $\phi_R \equiv 1$ in $B_R(0)$, $\phi_R \equiv 0$ outside $B_{2R}(0)$ and all derivatives up to order $h$ of $\phi_R$ along $Y_1,...,Y_m$ are bounded by a constant $c_h>0$ independent of $R$.
\end{lemma}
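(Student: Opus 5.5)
The natural approach is to construct $\phi_R$ by dilating a single fixed cutoff, using the homogeneity of $Y_1,\dots,Y_m$ with respect to $D_\lambda$. Fix once and for all a smooth function $\Psi:[0,+\infty)\to[0,1]$ with $\Psi\equiv 1$ on $[0,1]$ and $\Psi\equiv 0$ on $[2,+\infty)$, and set
\begin{equation*}
    \phi_1(x):=\Psi(||x||), \qquad \phi_R(x):=\phi_1\big(D_{1/R}(x)\big)=\Psi\big(||x||/R\big).
\end{equation*}
Here $||\cdot||$ is the smooth homogeneous norm (\ref{smooth homogeneous norm}), and the balls $B_r(0)$ are those of the quasidistance $d$. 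Radiality, the range $[0,1]$, $\phi_R\equiv 1$ on $B_R(0)$ and $\phi_R\equiv 0$ outside $B_{2R}(0)$ all follow directly from $||D_{1/R}x||=||x||/R$.

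The key step is to check that $\phi_1\in C^\infty_0(\mathbb{R}^N)$. The norm $||\cdot||$ is smooth only outside the origin, but $\Psi\circ||\cdot||$ is constant ($\equiv 1$) on the neighbourhood $B_1(0)$ of $0$, so the possible singularity at the origin does not matter. Hence $\phi_1$ is smooth with compact support. Consequently, for every fixed $h$, the quantity $c_h:=\sup_{|I|\le h}\|Y_I\phi_1\|_{L^\infty(\mathbb{R}^N)}$ is finite and depends only on $\mathbb{G}$, $\Psi$ and $h$.

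Next we use homogeneity. Since each $Y_i$ ($1\le i\le m$) is $1$-homogeneous with respect to $D_\lambda$, we have $Y_i(g\circ D_\lambda)=\lambda\,(Y_ig)\circ D_\lambda$ for every smooth $g$. Iterating this identity gives, for every multi-index $I$ with entries in $\{1,\dots,m\}$,
\begin{equation*}
    Y_I\phi_R = R^{-|I|}\,(Y_I\phi_1)\circ D_{1/R}.
\end{equation*}
Therefore $\|Y_I\phi_R\|_{L^\infty}=R^{-|I|}\|Y_I\phi_1\|_{L^\infty}\le c_h$ for all $|I|\le h$ and all $R\ge 1$. This is exactly where the assumption $R\ge 1$ enters.

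The argument is essentially routine. The only point requiring care is the smoothness of $\phi_1$ at the origin, which is handled by the fact that $\phi_1$ is constant near $0$. As an optional strengthening, if one also wants bounds on the $C^{h,\alpha}_Y$ seminorms independent of $R$, these follow from Lemma \ref{UGE Lemma 7.6} applied to $Y_I\phi_R$ with $|I|\le h$. Indeed, that lemma bounds the Lipschitz constant of $Y_I\phi_R$ with respect to $d$ by $\sup_i\|Y_iY_I\phi_R\|_\infty\le c_{h+1}$, and interpolating with the sup bound $2c_h$ controls the $\alpha$-seminorm uniformly in $R$.
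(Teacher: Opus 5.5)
Your proof is correct, but it takes a different route from the paper.

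\textbf{Your route.} You obtain $\phi_R$ by dilating one fixed cutoff $\phi_1=\Psi(\|\cdot\|)$. The $1$-homogeneity of $Y_1,\dots,Y_m$ then gives the exact identity $Y_I\phi_R=R^{-|I|}(Y_I\phi_1)\circ D_{1/R}$. All the $R$-dependence is therefore explicit, and you even get decay of order $R^{-|I|}$. The same scaling, through the homogeneity (\ref{d_Y is homogeneous}) of $d_Y$, gives $|Y_I\phi_R|_{C^{\alpha}_Y(\mathbb{R}^N)}=R^{-|I|-\alpha}|Y_I\phi_1|_{C^{\alpha}_Y(\mathbb{R}^N)}$. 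This is exactly the uniform Hölder-norm bound that the proof of Theorem \ref{global regularity thm} needs, and your interpolation via Lemma \ref{UGE Lemma 7.6} also delivers it.

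\textbf{The paper's route.} The paper writes an explicit profile, $\phi_R=e^{1/(\|x\|-2R)+1/R}$ on $\{R<\|x\|<2R\}$. It proves by induction on $|I|$ a representation $Y_I\phi_R=\phi_R\sum_{h,p} f_{h,p,I}\,(\|x\|-2R)^{-h}$, with each $f_{h,p,I}$ homogeneous of nonpositive degree. It then bounds the two factors separately for $R\ge 1$: homogeneity handles the $f_{h,p,I}$, and boundedness of $s^h e^{-s}$ on $(0,\infty)$ handles the exponential part. This is more computational and gives only uniform boundedness, with no decay in $R$.

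\textbf{A defect your argument avoids.} As written, the paper's profile matches the value $1$ at $\|x\|=R$ but not the derivative. Its radial slope there is $\frac{d}{dt}e^{1/(t-2R)+1/R}\big|_{t=R}=-1/R^2$ rather than $0$. So that $\phi_R$ is only Lipschitz across $\{\|x\|=R\}$, and the construction would need a genuinely smooth gluing to be correct. Your dilation argument avoids this entirely, because smoothness only has to be checked once, for $\phi_1$, which is constant near the origin.
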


\vspace{2mm}

\begin{proof}
    We show that the following function does the job:
    \begin{equation*}
        \phi_R(x)= \begin{cases}
            1 \hspace{2cm} \text{if} \: ||x|| \leq R\\
            e^{\frac{1}{||x||-2R}+\frac{1}{R}} \hspace{5mm} \text{if} \: R<||x||<2R\\
            0 \hspace{2cm} \text{if} \: ||x|| \geq 2R.
        \end{cases}
    \end{equation*}
    Everything except the $R$-uniform boundedness of the derivatives is immediate.\\
    Let us prove that for any (non-empty) multindex $I$ there holds
    \begin{equation}
    \label{rep formula Y_I phi_R}
        Y_I \phi_R(x)= \phi_R(x) \sum_{2 \leq h \leq 2k} \, \sum_{1-k \leq p \leq 0} \frac{f_{h,p,I}(x)}{(||x||-2R)^h} \quad\: R<||x||<2R,
    \end{equation}
    for suitable functions $\{f_{h,p,I}\}_{h,p}$, each smooth in $\{R<||x||<2R\}$ and homogeneous of some nonpositive degree $p$. We proceed by induction on $k=|I| \geq 1$; if $k=1$, i.e. $Y_I=Y_i$ for some $1 \leq i \leq m$, we have (for $R<||x||<2R$)
    \begin{align}
    \label{explicit form Y_i phi_R}
        Y_i \phi_R(x)=\phi_R(x) Y_i\left(\frac{1}{||x||-2R} \right)=-\frac{\phi_R(x)}{(||x||-2R)^2} Y_i ||x||.
    \end{align}
    By definition (\ref{def homogeneous norm}) and $1$-homogeneity of $\{Y_i\}_i$, it is clear that $Y_i ||x||$ is homogeneous of degree $0$, therefore the thesis holds with $f_{2,0,i}(x):=-Y_i ||x||$.\\
Fixed $k \geq 1$, assume the thesis holds for every $|I|=k$ and consider a multindex $I'$ with $|I'|=k+1$, namely $I'=(i,I)$ for suitable $1 \leq i \leq m$ and $|I|=k$. Differentiating (\ref{rep formula Y_I phi_R}) along $Y_i$, by Leibniz rule and (\ref{explicit form Y_i phi_R}) we get
\begin{align*}
    & Y_{I'} \phi_R(x)= \sum_{2 \leq h \leq 2k, \, 1-k \leq p \leq 0} Y_i \left( \phi_R(x) \frac{f_{h,p,I}(x)}{(||x||-2R)^h} \right) = \\
    & \quad = \sum_{h,p} \left\{\frac{Y_i \phi_R(x) f_{h,p,I}(x)}{(||x||-2R)^h}+\frac{\phi_R(x) Y_i f_{h,p,I}(x)}{(||x||-2R)^h}-h\frac{\phi_R(x) f_{h,p,I}(x)}{(||x||-2R)^{h+1}} Y_i ||x|| \right\} \\
    & \quad = \sum_{h,p} \phi_R(x) \left[\frac{-Y_i (||x||)f_{h,p,I}(x)}{(||x||-2R)^{h+2}} + \frac{Y_i f_{h,p,I}(x)}{(||x||-2R)^h} - \frac{Y_i(||x||) f_{h,I}(x)}{(||x||-2R)^{h+1}} \right],
\end{align*}
thus (up to rearranging terms) the claim is achieved, provided the numerators of the three fractions above are homogeneous of nonpositive degrees $ \geq -k$ for all $2 \leq h \leq 2k, \, 1-k \leq p \leq 0$: this is immediate in view of the $0$-homogeneity of $Y_i ||x||$, in fact by inductive assumption $f_{h,p,I}$ is homogeneous of degree $p \geq 1-k$, and thus $Y_i f_{h,p,I}$ is homogeneous of degree $p-1 \geq -k$. This completes our proof of (\ref{rep formula Y_I phi_R}).
\\
Now, for all $1\leq R<||x||<2R$, the $p$-homogeneity of $f_{h,p,I}$ with $p \leq 0$ yields
\begin{align*}
    |f_{h,p,I'}(x)| = \left|f_{h,p,I'}\left( \frac{x}{||x||} \right)\right| \cdot ||x||^p \leq ||f_{h,p,I'}||_{L^{\infty}(B_1(0))}=c_{h,p,I'},
\end{align*}
namely $\{f_{h,p,I'}\}_{h,p}$ are all bounded independently of $R \geq 1$.
Finally, since the map $G_{\beta}(t):=t^{-\beta} e^{t}$ is bounded in $\mathbb{R}^+$ for any real $\beta$, then also the term
\begin{align*}
    \frac{\phi_R(x)}{(||x||-2R)^h}=e^{1/R} G_{h} \left( \frac{1}{||x||-2R} \right) \leq e ||G_h||_{L^{\infty}(\mathbb{R}^+)}
\end{align*}
is bounded independently of $R \geq 1$ for every $h$, so the thesis follows by (\ref{rep formula Y_I phi_R}).
\end{proof}

\vspace{2mm}

\begin{proof}[Proof of Theorem \ref{global regularity thm}]
    The proof follows the lines of \cite[Theorem 6.1]{Global Sobolev est with a_ij} and proceeds by induction on $k \geq 1$. Given $u \in C^{2,\alpha}_Y(\mathbb{R}^N)$ with $Lu \in C^{1,\alpha}_Y(\mathbb{R}^N)$, by Theorem \ref{local Hölder regularity thm} we have $u \in C^{3,\alpha}_{Y,loc}(\mathbb{R}^N)$. Let $R \geq 1$ be fixed and $\phi_R \in C^{\infty}_0(\mathbb{R}^N;[0,1])$ with $\phi_R \equiv 1$ in $B_R(0)$, $\phi_R \equiv 0$ outside $B_{2R}(0)$ and the derivatives up to order $4$ of $\phi_R$ along $Y_1,...,Y_m$ bounded independently of $R$ (its existence is given by Lemma \ref{cutoff function lemma}). Obviously $u\phi_R \in C^{3,\alpha}_{Y}(\mathbb{R}^N)$, therefore exploiting (\ref{est higher derivs ineq}) and proceeding as in the proof of Theorem \ref{first part thm 1.6: estimates} one gets
    \begin{align*}
        & ||u||_{C^{3,\alpha}_Y(B_R(0))} \leq ||u \phi_R||_{C^{3,\alpha}_Y(\mathbb{R}^N)} \leq c \left\{ ||L(u \phi_R)||_{C^{1,\alpha}_Y(\mathbb{R}^N)} + ||u \phi_R||_{L^{\infty}(\mathbb{R}^N)} \right\} \\
        & \quad \leq c \, ||\phi_R||_{C^{3,\alpha}_Y(\mathbb{R}^N)} \left\{ ||Lu||_{C^{1,\alpha}_Y(\mathbb{R}^N)} + ||u||_{C^{2,\alpha}_Y(\mathbb{R}^N)} \right\} \\
        & \quad \text{by Theorem \ref{1.4: Schauder est Carnot thm}} \\
        & \quad \leq c \sup_{R \geq 1} ||\phi_R||_{C^{3,\alpha}_Y(\mathbb{R}^N)} \left\{||Lu||_{C^{1,\alpha}_Y(\mathbb{R}^N)} + ||u||_{L^{\infty}(\mathbb{R}^N)} \right\},
    \end{align*}
    with $c>0$ depending just on $\mathbb{G}, \, \Lambda, \, \alpha, \, ||a||_1$. We point out that the supremum at right-hand side is finite, since the bounds on the derivatives of $\phi_R$ hold uniformly w.r.t. $R \geq 1$; then taking the limit as $R \to +\infty$:
    \begin{equation*}
        ||u||_{C^{3,\alpha}_Y(\mathbb{R}^N)} \leq c(\mathbb{G}, \Lambda, \alpha, ||a||_1) \left\{ ||Lu||_{C^{1,\alpha}_Y(\mathbb{R}^N)} + ||u||_{L^{\infty}(\mathbb{R}^N)} \right\},
    \end{equation*}
    thus $u \in C^{3,\alpha}_Y(\mathbb{R}^N)$. Given $k \geq 2$, assume the thesis holds for all $v \in C^{2,\alpha}_Y(\mathbb{R}^N)$ with $Lv \in C^{k-1,\alpha}_Y(\mathbb{R}^N)$, then let $u\in C^{2,\alpha}_Y(\mathbb{R}^N)$ be such that $Lu \in C^{k,\alpha}_Y(\mathbb{R}^N)$; which yields $u \in C^{k+2,\alpha}_{Y,loc}(\mathbb{R}^N)$ by Theorem \ref{local Hölder regularity thm}. Let $\phi_R$ be a cutoff function as above, with all derivatives up to order $k+3$ bounded independently of $R$. Then $u \phi_R \in C^{k+2,\alpha}_Y(\mathbb{R}^N)$; therefore, analogously as in the case $k=1$:
    \begin{align*}
        & ||u||_{C^{k+2,\alpha}_Y(B_R(0))} \leq ||u \phi_R||_{C^{k+2,\alpha}_Y(\mathbb{R}^N)} \leq c_k \, \left\{ ||L(u \phi_R)||_{C^{k+2,\alpha}_Y(\mathbb{R}^N)} + ||u||_{L^{\infty}(\mathbb{R}^N)} 
        \right\} \\
        & \leq c_k \sup_{R \geq 1} ||u \phi_R||_{C^{k+2,\alpha}_Y(\mathbb{R}^N)} \left\{ ||Lu||_{C^{k,\alpha}_Y(\mathbb{R}^N)} + ||u||_{C^{k+1, \alpha}_Y(\mathbb{R}^N)} \right\}.
    \end{align*}
    The supremum is finite since the bounds on derivatives of $\phi_R$ are uniform in $R \geq 1$; moreover $u \in C^{k+1,\alpha}_Y(\mathbb{R}^N)$ by inductive assumption, so the whole right-hand side is finite (and independent of $R$). Taking the limit as $R \to +\infty$:
    \begin{equation*}
        ||u||_{C^{k+2,\alpha}_Y(\mathbb{R}^N)} \leq c_k \, \left\{ ||Lu||_{C^{k,\alpha}_Y(\mathbb{R}^N)} + ||u||_{C^{k+1,\alpha}_Y(\mathbb{R}^N)} \right\}<+\infty,
    \end{equation*}
    thus $u \in C^{k+2,\alpha}_Y(\mathbb{R}^N)$.
\end{proof}

\vspace{2mm}

\hspace{-7mm} Combining Theorems \ref{first part thm 1.6: estimates} and \ref{global regularity thm} we readily get Theorem \ref{global est higher derivs thm}.

\vspace{5mm}

\section{Removal of the group structure} \label{sec 5}
Let us return to our original framework, with $X_1, \, ..., \, X_m$ homogeneous (but not left-invariant) Hörmander vector fields on $(\mathbb{R}^n, \delta_{\lambda})$ and $A=A(x)$ satisfying (H3) in $\mathbb{R}^n$ for some constants $\Lambda \in (0,1), \, \alpha \in (0,1), \, ||a||_{0,X}>0$; let also $\widetilde{X}_1, \, ..., \, \widetilde{X}_m$ and $\widetilde{A}=\widetilde{A}(x,\xi)$ be their lifted versions, in the sense of Theorem \ref{global BB lifting thm}.\\
Since $\widetilde{X}_1, \, ..., \, \widetilde{X}_m$ are Lie generators of a Carnot group $\mathbb{G}$ on $\mathbb{R}^N$, the theory developed in Sections \ref{sec 2}-\ref{sec 4} applies to the operator $\widetilde{\mathcal{L}}=\widetilde{a}^{ij}(\cdot) \, \widetilde{X}_i \widetilde{X}_j$, namely there is a constant $c_0=c_0(\mathbb{G}, \Lambda, \alpha, ||a||_0)>0$ s.t. for any $v\in C^{2,\alpha}_{\widetilde{X}}(\mathbb{R}^N)$ there holds
\begin{equation}
    \label{lifted Schauder est} ||v||_{C^{2,\alpha}_{\widetilde{X}}(\mathbb{R}^N)} \leq c_0 \left\{ ||\widetilde{\mathcal{L}}v||_{C^{\alpha}_{\widetilde{X}}(\mathbb{R}^N)} + ||v||_{L^{\infty}(\mathbb{R}^N)} \right\}
\end{equation}
and, if moreover $\widetilde{a}_{ij} \in C^{k,\alpha}_{\widetilde{X}}(\mathbb{R}^N)$, $k \geq 1$ for all $1 \leq i,j \leq m$, then there exists a constant $c_k=c(k,\, \mathbb{G}, \, \Lambda, \, \alpha, \, ||\widetilde{a}||_k)>0$ such that for any $v \in C^{2,\alpha}_{\widetilde{X}}(\mathbb{R}^N)$ with $\mathcal{\widetilde{L}}v \in C^{k,\alpha}_{\widetilde{X}}(\mathbb{R}^N)$ there holds $v \in C^{k+2,\alpha}_{\widetilde{X}}(\mathbb{R}^N)$ and
\begin{equation*}
||v||_{C^{k+2,\alpha}_{\widetilde{X}}(\mathbb{R}^N)} \leq c_k \left\{ ||\widetilde{\mathcal{L}}v||_{C^{k,\alpha}_{\widetilde{X}}(\mathbb{R}^N)} + ||v||_{L^{\infty}(\mathbb{R}^N)} \right\}.
\end{equation*}
\\
Denoted by $d_X$ and $d_{\widetilde{X}}$ the CC distances induced by $X_1, \, ..., \, X_m$ on $\mathbb{R}^n$ and by $\widetilde{X}_1, \, ..., \, \widetilde{X}_m$ on $\mathbb{R}^N$ (respectively), in order to deduce Theorem \ref{global C^alpha estimate thm} from (\ref{lifted Schauder est}) we need a couple of results (Lemma \ref{f C^alpha -> f tilde C^alpha} and Lemma \ref{f tilde C^alpha -> f C^alpha} below) relating Hölder seminorms in the original space and in the lifted space. \\In the following, we denote by $E:C(\mathbb{R}^n) \to C(\mathbb{R}^N)$ the \say{extension} operator $(Ef)(x,\xi):=f(x)$. We point out that, although in principle $X_i$ ($1 \leq i \leq m$) is a vector field on $\mathbb{R}^n$, we can equivalently think of it as a vector field on $\mathbb{R}^N$ which does not act on the variables $\xi_1, \, ..., \, \xi_p$; with this notation, it is immediate that $X_i$ and $E$ commute, namely $\widetilde{X}_i(Ef)=E(X_i f)$ for all $1 \leq i \leq m$, and recursively $\widetilde{X}_I(Ef)=E(X_I f)$ for any multindex $I$ (for all $f$ smooth enough).

\vspace{2mm}

\begin{lemma} 
\label{f C^alpha -> f tilde C^alpha}
If $f \in C^{\alpha}_X(\mathbb{R}^n)$, then $\widetilde{f}:=Ef \in C^{\alpha}_{\widetilde{X}}(\mathbb{R}^N)$ and 
\begin{align*}||\widetilde{f}||_{L^{\infty}(\mathbb{R}^N)} = ||f||_{L^{\infty}(\mathbb{R}^n)}, \quad |\widetilde{f}|_{C^{\alpha}_{\widetilde{X}}(\mathbb{R}^N)} \leq |f|_{C^{\alpha}_X(\mathbb{R}^n)} \nonumber.
\end{align*}

\end{lemma}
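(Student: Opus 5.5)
The $L^\infty$ identity is immediate: $\widetilde f(x,\xi)=f(x)$, so $\widetilde f$ takes exactly the values of $f$ and the two suprema coincide. The content of the lemma is the seminorm bound. It reduces to the distance comparison
\begin{equation*}
    d_X(x,x') \leq d_{\widetilde{X}}((x,\xi),(x',\xi')) \quad \forall (x,\xi), \, (x',\xi') \in \mathbb{R}^N.
\end{equation*}
Granting this, for $(x,\xi)\neq(x',\xi')$ there are two cases. If $x=x'$, the numerator $|\widetilde f(x,\xi)-\widetilde f(x',\xi')|=|f(x)-f(x)|=0$. Otherwise
\begin{equation*}
    \frac{|\widetilde f(x,\xi)-\widetilde f(x',\xi')|}{d_{\widetilde X}((x,\xi),(x',\xi'))^{\alpha}} \leq \frac{|f(x)-f(x')|}{d_X(x,x')^{\alpha}} \leq |f|_{C^{\alpha}_X(\mathbb{R}^n)}.
\end{equation*}
Taking the supremum gives $|\widetilde f|_{C^{\alpha}_{\widetilde X}(\mathbb{R}^N)}\leq |f|_{C^\alpha_X(\mathbb{R}^n)}$, and in particular $\widetilde f \in C^{\alpha}_{\widetilde X}(\mathbb{R}^N)$.

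To prove the distance comparison I project admissible curves. Fix $\delta>0$ with $C_{(x,\xi),(x',\xi')}(\delta)\neq\emptyset$ for $\widetilde X$, and take $\gamma=(\gamma_1,\gamma_2)$ in that set, where $\gamma_1$ collects the first $n$ components. By (\ref{relation X_i tilde and X_i}), each $\widetilde X_i$ is $X_i$ plus a field acting only in the $\xi$-directions. Hence the $\mathbb{R}^n$-component of $(\widetilde X_i)_{(y,\eta)}$ is $(X_i)_y$, independently of $\eta$, since the coefficients of $X_i$ depend only on $y\in\mathbb{R}^n$.

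Therefore $\gamma_1'(t)=\alpha^i(t)\,(X_i)_{\gamma_1(t)}$ for a.e. $t$, with the same controls $\alpha^i$ and $\|\alpha^i\|_{L^\infty(0,1)}<\delta$. Moreover $\gamma_1$ is absolutely continuous, with $\gamma_1(0)=x$ and $\gamma_1(1)=x'$. Thus $\gamma_1\in C_{x,x'}(\delta)$ for $X_1,\dots,X_m$, so $d_X(x,x')\leq\delta$. Taking the infimum over admissible $\delta$ yields the comparison.

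The only step needing care is this projection argument: one must check that the coefficients of $X_i$, viewed as a field on $\mathbb{R}^N$, really do not depend on $\xi$, and that the lifting adds only $\partial_{\xi_j}$ components. Both facts are exactly what Theorem \ref{global BB lifting thm} provides, so I do not expect a genuine obstacle.
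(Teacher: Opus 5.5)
Your proof is correct. Projecting a $\widetilde{X}$-admissible curve onto its first $n$ components gives an $X$-admissible curve with the same controls, so $d_X(x,x')\leq d_{\widetilde{X}}((x,\xi),(x',\xi'))$, and the seminorm bound follows immediately. The paper gives no argument of its own here and only cites \cite[Proposition 10.39]{librone}; your distance comparison by projection of curves is the standard argument for this fact, so it is essentially the same route.
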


\vspace{2mm}

For the proof see \cite[Proposition 10.39]{librone}.

\vspace{2mm}

\begin{lemma}
\label{f tilde C^alpha -> f C^alpha}
    If a continuous function $f : \mathbb{R}^n \to \mathbb{R}$ is such that $\widetilde{f} \in C^{\alpha}_{\widetilde{X}}(\mathbb{R}^N)$, then $f \in C^{\alpha}_X(\mathbb{R}^n)$ and there exists a constant $c>0$ (independent of $f$) such that 
    \begin{equation*}
    ||f||_{C^{\alpha}_X(\mathbb{R}^n)} \leq c \, ||\widetilde{f}||_{C^{\alpha}_{\widetilde{X}}(\mathbb{R}^N)}.
    \end{equation*}
\end{lemma}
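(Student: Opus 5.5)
The key input is a comparison between $d_X$ and $d_{\widetilde X}$: for every $x,x'\in\mathbb{R}^n$ and every $\xi\in\mathbb{R}^p$ there should exist $\xi'\in\mathbb{R}^p$ with
\begin{equation*}
d_{\widetilde X}\bigl((x,\xi),(x',\xi')\bigr)\leq c\, d_X(x,x'),
\end{equation*}
where $c$ is independent of the points. Once this is available, the statement follows at once. The $L^\infty$ part is trivial, since $\|\widetilde f\|_{L^\infty(\mathbb{R}^N)}=\|f\|_{L^\infty(\mathbb{R}^n)}$ because $\widetilde f$ does not depend on $\xi$. For the seminorm, we fix $x\neq x'$, choose $\xi=0$ and $\xi'$ as above, and write
\begin{equation*}
|f(x)-f(x')|=|\widetilde f(x,0)-\widetilde f(x',\xi')|\leq |\widetilde f|_{C^{\alpha}_{\widetilde X}(\mathbb{R}^N)}\, d_{\widetilde X}\bigl((x,0),(x',\xi')\bigr)^{\alpha}\leq c^{\alpha}|\widetilde f|_{C^{\alpha}_{\widetilde X}(\mathbb{R}^N)}\, d_X(x,x')^{\alpha}.
\end{equation*}
Summing the two bounds gives the claim with constant $\max\{1,c^\alpha\}$.

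To prove the distance comparison, I would lift admissible curves. Let $\gamma\colon[0,1]\to\mathbb{R}^n$ be a curve in $C_{x,x'}(\delta)$ with $\gamma'=\alpha^i(t)\,(X_i)_{\gamma(t)}$ and $\|\alpha^i\|_\infty<\delta$. We solve the ODE
\begin{equation*}
\Gamma'(t)=\alpha^i(t)\,(\widetilde X_i)_{\Gamma(t)},\qquad \Gamma(0)=(x,\xi).
\end{equation*}
By the structure \eqref{relation X_i tilde and X_i}, the $x$-component of $\Gamma$ solves the same equation as $\gamma$, so it coincides with $\gamma$. The $\xi$-component satisfies $\xi_j'=\alpha^i R_{ij}(\gamma(t),\xi)$, and it remains to check that this solution exists on all of $[0,1]$.

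Global existence follows from the Carnot structure. In exponential coordinates the left-invariant fields $\widetilde X_i$ have polynomial coefficients, and by homogeneity the coefficient of $\partial_{\xi_j}$ depends only on variables of strictly lower degree. The system for $\xi$ is therefore triangular in the degrees, and it can be integrated successively with no blow-up. Alternatively, one can use completeness of left-invariant vector fields on a Lie group.

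The endpoint is $\Gamma(1)=(x',\xi')$ for some $\xi'$, and $\Gamma\in C_{(x,\xi),(x',\xi')}(\delta)$, so $d_{\widetilde X}\bigl((x,\xi),(x',\xi')\bigr)<\delta$. Taking the infimum over $\delta$ gives the comparison with $c=1$ for a suitable $\xi'$. This is essentially the argument behind Lemma \ref{f C^alpha -> f tilde C^alpha} run in reverse, and it is also the content of \cite{Hölder spaces: original vs lifted}. One subtlety is that $\xi'$ depends on $\delta$ through the chosen curve; this is harmless, since we apply the Hölder estimate for each $\delta$ and then let $\delta\downarrow d_X(x,x')$.

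The main obstacle is the global solvability of the lifted ODE, in other words showing that admissible curves lift for all time with no compactness assumption. I would settle it by the completeness argument for left-invariant fields on $\mathbb{G}$, using that the fields $\widetilde X_i$ are exactly the generators given by Theorem \ref{global BB lifting thm}.
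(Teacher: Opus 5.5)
Your proof is correct, and it takes a different route from the paper.

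\textbf{The paper's route.} The paper never compares $d_X$ and $d_{\widetilde X}$ itself. It quotes from Bramanti--Brandolini (their Proposition 8.3) the local bound $|f|_{C^{\alpha}_X(B_R(\overline{x}))}\le c\,|\widetilde f|_{C^{\alpha}_{\widetilde X}(B_R(\overline{x},\overline{\xi}))}$, with $c$ independent of $f$, $R$ and the centre. It then globalizes by a covering and a supremum over the balls.

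\textbf{Your route.} You prove the underlying geometric fact directly and globally, by lifting $X$-admissible curves to $\widetilde X$-admissible curves with the same controls. This works precisely because of \eqref{relation X_i tilde and X_i}: the $x$-components of the $\widetilde X_i$ are those of the $X_i$ and depend on $x$ only. It also uses that the lifting of Theorem \ref{global BB lifting thm} is global.

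\textbf{Two small points in your global-existence step.} First, the triangular structure follows from $1$-homogeneity with respect to the diagonal dilations $D_\lambda(x,\xi)=(\delta_\lambda(x),\delta^*_\lambda(\xi))$ alone. The $\partial_{\xi_j}$-coefficient is a smooth $(\tau_j-1)$-homogeneous function, hence a polynomial in variables of degree at most $\tau_j-1$. So no exponential coordinates are needed, which is just as well, since $(x,\xi)$ need not be exponential coordinates. Second, completeness of each single $\widetilde X_i$ does not by itself cover time-dependent controls $\alpha^i(t)$. What does work is that left translations map solutions to solutions, which gives a local existence time independent of the initial point and time.

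\textbf{What each route buys.} Your argument is self-contained and gives the sharp constant $1$. Together with Lemma \ref{f C^alpha -> f tilde C^alpha}, you actually get $|\widetilde f|_{C^{\alpha}_{\widetilde X}(\mathbb{R}^N)}=|f|_{C^{\alpha}_X(\mathbb{R}^n)}$. It also bypasses the paper's covering step, which needs a little care: a cover by balls of one fixed radius only controls pairs of points lying in a common ball. To close it one must use the $R$-independence of $c$, or handle far-apart pairs through the $L^\infty$ bound. The paper's route is shorter, since it delegates the geometric content to the cited result.
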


\vspace{2mm}

\begin{proof}
    For any fixed $R>0$ and $(\overline{x}, \overline{\xi}) \in \mathbb{R}^N$ we have (cf \cite[Proposition 8.3]{Hölder spaces: original vs lifted})
    \begin{equation*}
        |f|_{C^{\alpha}_X(B_R(\overline{x}))} \leq c \, |\widetilde{f}|_{C^{\alpha}_{\widetilde{X}}(B_R(\overline{x}, \overline{\xi}))} \leq c \, |\widetilde{f}|_{C^{\alpha}_{\widetilde{X}}(\mathbb{R}^N)}
    \end{equation*}
    for some constant $c>0$ that does not depend on $f$ nor $R$ nor $(\overline{x}, \overline{\xi})$). Then, given a covering $\{B_R(\overline{x_i})\}_{i \in \mathbb{N}}$ of $\mathbb{R}^n$, there holds
    \begin{equation*}
        |f|_{C^{\alpha}_X(B_R(\overline{x_i}))} \leq c \, |\widetilde{f}|_{C^{\alpha}_{\widetilde{X}}(\mathbb{R}^N)} \hspace{5mm} \forall i \in \mathbb{N}
    \end{equation*}
    with $c$ independent of $i$ by construction, which clearly yields
    \begin{equation*}
        ||f||_{C^{\alpha}_X(B_R(\overline{x_i}))} \leq c \, ||\widetilde{f}||_{C^{\alpha}_{\widetilde{X}}(\mathbb{R}^N)} \hspace{5mm} \forall i \in \mathbb{N}
    \end{equation*}
    (with $c$ independent on $i$). Taking the supremum on $i$, we get
    \begin{equation*}
        ||f||_{C^{\alpha}_X(\mathbb{R}^n)} \leq c \sup_{i \in \mathbb{N}} ||f||_{C^{\alpha}_X(B_R(\overline{x_i}))} \leq c' \, ||\widetilde{f}||_{C^{\alpha}_{\widetilde{X}}(\mathbb{R}^N)}.
    \end{equation*}
\end{proof}

\vspace{2mm}

\begin{lemma}
\label{lemma 5.3 final}
    Given $u \in C^{k+2,\alpha}_X(\mathbb{R}^n)$, then $\widetilde{u}:=E(u) \in C^{k+2,\alpha}_{\widetilde{X}}(\mathbb{R}^N)$ and
    \begin{align}
        & \label{||u||_2,alpha <= c||f tilde||2,alpha} ||u||_{C^{k+2,\alpha}_X(\mathbb{R}^n)} \leq c \, ||\widetilde{u}||_{C^{k+2,\alpha}_{\widetilde{X}}(\mathbb{R}^N)},
        \\
        & \label{||Lu tilde|| <= ||Lu||}||\mathcal{\widetilde{L}} \widetilde{u}||_{C^{k,\alpha}_{\widetilde{X}}(\mathbb{R^N)}} \leq ||\mathcal{L} u||_{C^{k,\alpha}_X(\mathbb{R}^n)},
    \end{align}
    where $c>0$ is the constant of Lemma \ref{f tilde C^alpha -> f C^alpha}.
\end{lemma}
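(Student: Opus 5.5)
The plan is to use the commutation identity $\widetilde{X}_I(Ef)=E(X_I f)$, valid for every multindex $I$, together with Lemmas \ref{f C^alpha -> f tilde C^alpha} and \ref{f tilde C^alpha -> f C^alpha}, applied derivative by derivative.

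\emph{Membership $\widetilde{u}\in C^{k+2,\alpha}_{\widetilde{X}}(\mathbb{R}^N)$.} Let $u\in C^{k+2,\alpha}_X(\mathbb{R}^n)$ and $|I|\leq k+2$. By definition $X_I u$ exists and lies in $C^\alpha_X(\mathbb{R}^n)$. Since $\widetilde{X}_i=X_i+\sum_j R_j\partial_{\xi_j}$ and $E(X_I u)$ does not depend on $\xi$, induction on $|I|$ shows that $\widetilde{X}_I\widetilde{u}$ exists pointwise and equals $E(X_Iu)$. Lemma \ref{f C^alpha -> f tilde C^alpha} then gives
\begin{equation*}
\|\widetilde{X}_I\widetilde{u}\|_{L^\infty}=\|X_Iu\|_{L^\infty},\qquad |\widetilde{X}_I\widetilde{u}|_{C^\alpha_{\widetilde{X}}}\leq |X_Iu|_{C^\alpha_X}.
\end{equation*}
Hence $\widetilde{u}\in C^{k+2,\alpha}_{\widetilde{X}}(\mathbb{R}^N)$, with $\|\widetilde{u}\|_{C^{k+2,\alpha}_{\widetilde{X}}}\leq\|u\|_{C^{k+2,\alpha}_X}$.

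\emph{Estimate (\ref{||u||_2,alpha <= c||f tilde||2,alpha}).} For each $|I|\leq k+2$, the function $X_Iu$ is continuous and $E(X_Iu)=\widetilde{X}_I\widetilde{u}\in C^\alpha_{\widetilde{X}}(\mathbb{R}^N)$. Lemma \ref{f tilde C^alpha -> f C^alpha} gives $\|X_Iu\|_{C^\alpha_X(\mathbb{R}^n)}\leq c\,\|\widetilde{X}_I\widetilde{u}\|_{C^\alpha_{\widetilde{X}}(\mathbb{R}^N)}$. Summing over $|I|\leq k+2$ yields (\ref{||u||_2,alpha <= c||f tilde||2,alpha}) with the same constant $c$.

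\emph{Estimate (\ref{||Lu tilde|| <= ||Lu||}).} By definition $\widetilde{a}_{ij}=E a_{ij}$, so
\begin{equation*}
\widetilde{\mathcal{L}}\widetilde{u}=E(a^{ij})\,E(X_iX_ju)=E(a^{ij}X_iX_ju)=E(\mathcal{L}u).
\end{equation*}
Applying the commutation identity again, $\widetilde{X}_J\widetilde{\mathcal{L}}\widetilde{u}=E(X_J\mathcal{L}u)$ for every $|J|\leq k$, provided $X_J\mathcal{L}u$ exists. This is the case whenever the right-hand side of (\ref{||Lu tilde|| <= ||Lu||}) is finite, which is the only case where there is anything to prove. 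Lemma \ref{f C^alpha -> f tilde C^alpha}, applied term by term, then shows that each summand of $\|\widetilde{\mathcal{L}}\widetilde{u}\|_{C^{k,\alpha}_{\widetilde{X}}}$ is bounded by the corresponding summand of $\|\mathcal{L}u\|_{C^{k,\alpha}_X}$, with constant exactly $1$.

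\emph{Main obstacle.} The only delicate point is justifying $\widetilde{X}_I(Ef)=E(X_If)$ pointwise when $f$ has only the regularity given by the Hölder class (derivatives along the vector fields, no classical $C^\infty$ smoothness). I would handle it by induction on $|I|$. At each step the function being differentiated is of the form $Eg$, hence independent of $\xi$, so the terms $R_j\partial_{\xi_j}$ contribute $0$. The derivative of $Eg$ along the integral curve of $\widetilde{X}_i$ through $(x,\xi)$ therefore coincides with the derivative of $g$ along the integral curve of $X_i$ through $x$, because the $x$-component of the $\widetilde{X}_i$-flow is the $X_i$-flow.
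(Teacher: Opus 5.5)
Your proof is correct and follows the paper's own argument. The commutation $\widetilde{X}_I(Eu)=E(X_Iu)$ together with Lemma \ref{f C^alpha -> f tilde C^alpha} gives membership, and via $\widetilde{\mathcal{L}}\widetilde{u}=E(\mathcal{L}u)$ it also gives (\ref{||Lu tilde|| <= ||Lu||}); applying Lemma \ref{f tilde C^alpha -> f C^alpha} to each $X_Iu$ gives (\ref{||u||_2,alpha <= c||f tilde||2,alpha}), and your flow argument just spells out the commutation identity that the paper treats as immediate.
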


\vspace{2mm}

\begin{proof}
    Since $\widetilde{X}_I \widetilde{u}=E(X_Iu)$ for all $|I| \leq k+2$, by Lemma \ref{f C^alpha -> f tilde C^alpha} we have $\widetilde{u} \in C^{k+2,\alpha}_Y(\mathbb{R}^N)$; hence Lemma \ref{f tilde C^alpha -> f C^alpha} applies for any $Y_I u, \, |I| \leq k+2$, yielding
    \begin{align*}
        & ||u||_{C^{k+2,\alpha}_X(\mathbb{R}^n)} = \sum_{|I| \leq k+2} ||X_I u||_{C^{\alpha}_{X}(\mathbb{R}^n)} \\
        & \quad \leq c\sum_{|I| \leq k+2} ||\widetilde{X}_I \widetilde{u}||_{C^{\alpha}_{\widetilde{X}}(\mathbb{R}^N)} = c \, ||\widetilde{u}||_{C^{k+2,\alpha}_{\widetilde{X}}(\mathbb{R}^N)}.
    \end{align*}
    Since $\widetilde{a}_{ij}:=E(a_{ij})$ then the commutation of $\{Y_i\}_i$ with the extension $E$ gives also $\mathcal{\widetilde{L}} \widetilde{u} = E(\mathcal{L} u)$, so by Lemma \ref{f C^alpha -> f tilde C^alpha} we immediately get (\ref{||Lu tilde|| <= ||Lu||}).
\end{proof}

\vspace{2mm}

\hspace{-7mm} Finally:

\vspace{2mm}

\begin{proof}[Proof of Theorem \ref{global C^alpha estimate thm}]
    Given $k \geq 0$, let $u \in C^{2,\alpha}_X(\mathbb{R}^n)$, let $\widetilde{u}=E(u)$ and let us assume that $a_{ij} \in C^{k,\alpha}_X(\mathbb{R}^n)$ for all $1 \leq i,j \leq m$ and $\mathcal{L} u \in C^{k,\alpha}_X(\mathbb{R}^n)$ (which automatically holds for $k=0$, in view of (H3)). For $k \geq 1$, by Lemma \ref{f C^alpha -> f tilde C^alpha} we have $\widetilde{a}_{ij} \in C^{k,\alpha}_{\widetilde{X}}(\mathbb{R}^N)$, $\widetilde{u} \in C^{2,\alpha}_{\widetilde{X}}(\mathbb{R}^N)$ and $\mathcal{\widetilde{L}} \widetilde{u} \in C^{k,\alpha}_Y(\mathbb{R}^N)$; so $\widetilde{u} \in C^{k+2,\alpha}_{\widetilde{X}}(\mathbb{R}^N)$ and (by either Theorem \ref{1.4: Schauder est Carnot thm} if $k=0$ or \ref{global est higher derivs thm} if $k \geq 1$) we have
    \begin{equation}
        \label{Schauder est u tilde}||\widetilde{u}||_{C^{k+2,\alpha}_{\widetilde{X}}(\mathbb{R}^N)} \leq c(k,\mathbb{G}, \Lambda, \alpha, ||a||_k) \left\{ ||\mathcal{\widetilde{L}} \widetilde{u}||_{C^{k,\alpha}_{\widetilde{X}}(\mathbb{R}^N)} + ||\widetilde{u}||_{L^{\infty}(\mathbb{R}^N)} \right\}.
    \end{equation}
    Then, in view of Lemma \ref{lemma 5.3 final}, we get $u \in C^{k+2,\alpha}_X(\mathbb{R}^n)$ and
    \begin{align*}
        & ||u||_{C^{k+2,\alpha}_X(\mathbb{R}^n)} \leq ||\widetilde{u}||_{C^{k+2,\alpha}_{\widetilde{X}}(\mathbb{R}^N)} \leq c(k,\mathbb{G}, \Lambda, \alpha, ||a||_k) \{ ||\mathcal{\widetilde{L}} \widetilde{u}||_{C^{k,\alpha}_{\widetilde{X}}(\mathbb{R}^N)} + \\
        & + ||\widetilde{u}||_{L^{\infty}(\mathbb{R}^N)} \}\leq c'(k,\mathbb{G}, \Lambda, \alpha, ||a||_k) \left\{ ||\mathcal{L} u||_{C^{k,\alpha}_X(\mathbb{R}^n)} + ||u||_{L^{\infty}(\mathbb{R}^n)} \right\}.
    \end{align*}
\end{proof}

\vspace{1cm}

\begin{address}
    M. Faini: Dipartimento di Matematica, Politecnico di Milano, via Bonardi 9, 20133 Milano, Italy.\\
    email address: matteo.faini@polimi.it
\end{address}

\end{document}